\newtheoremstyle{theorem}%name
  {10pt}          % space above
  {10pt}  % space below
  {\sl}  % bofy font
  {\parindent}     % ident - empty=no indent,  \parindent= paragraph indent
  {\bf}  % thm head font
  {. }    % punctuation after thm head
  { }    % space after thm head: `` ``=normal \newline=linebreak
  {}     % thm head specification
\newtheorem{thm}{Theorem}[section]
\newtheorem{lem}[thm]{Lemma}
\newtheorem{prop}[thm]{Proposition}
\newtheorem{defn}{Definition}[section]
\newtheorem{exam}[defn]{Example}
\newtheorem{rem}{Remark}[section]
\begin{document}
\title{\large\bf On delta and nabla Caputo fractional differences and dual identities}
\author{\small \bf Thabet Abdeljawad $^a$'$^b$  \\ {\footnotesize $^a$ Department of
Mathematics, \c{C}ankaya University, 06530, Ankara, Turkey}\\ {\footnotesize $^b$ Department of Mathematics and Physical Sciences}\\
{\footnotesize Prince Sultan University, P. O. Box 66833, Riyadh 11586, Saudi Arabia}}
\date{}
\maketitle

{\footnotesize {\noindent\bf Abstract.} We Investigate two types of dual identities for Caputo fractional   differences. The first type relates nabla and delta type fractional sums and differences. The second type represented by the Q-operator relates left and right fractional sums and differences. Two types of Caputo fractional differences are introduced, one of them (dual one) is defined so that it obeys the investigated dual identities. The relation between Rieamnn and Caputo fractional differences is investigated and the delta and nabla discrete Mittag-Leffler functions are confirmed by solving Caputo type linear fractional difference equations. A nabla integration by parts formula is obtained for Caputo fractional differences as well.
 \\

{\bf Keywords:} right (left) delta and nabla fractional sums, right (left) delta and nabla  Riemann and  Caputo  fractional differences.  Q-operator, dual identity.

\section{Introduction}

During the last two decades, due to its widespread applications in different fields of science and engineering, fractional calculus has attracted the attention of many researchers \cite{podlubny,Samko, Kilbas,agrawal1,scalas,tr}.

 Starting from the idea of  discretizing the Cauchy integral formula, Miller and Ross \cite{Miller} and Gray and Zhang \cite{Gray}  obtained discrete versions of left type fractional integrals and derivatives, called fractional sums and differences. After then, several authors  started to deal with discrete fractional calculus \cite{Th Caputo,Ferd,Feri,Nabla,Atmodel,Nuno,TDbyparts,Gdelta,Gnabla,Gfound,THFer,Thsh}, benefiting from time scales calculus originated  in 1988 (see \cite{Adv}).

  In \cite{Th Caputo}, the concept of Caputo fractional difference was introduced and investigated. In this article we proceed deeply to investigate  Caputo fractional differences under two kinds of dual identities. The first kind relates nabla and delta type Caputo fractional differences and the second one, represented by the Q-operator, relates left and right ones. Arbitrary order Riemann and Caputo fractional differences are related as well. By the help of the previously obtained results in \cite{THFer} and \cite{Thsh} an integration by parts formula for Caputo fractional differences is originated.

 The article is organized as follows: The remaining part of this section contains  summary to some of the basic notations and definitions in delta and nabla calculus. Section 2 contains the definitions in the frame of delta and nabla fractional sums and differences in the Riemann sense. The third section contains some dual identities relating nabla and delta type fractional sums and differences in Riemann sense as previously investigated in \cite{Thsh}.  In Section 4 Caputo fractional differences are given and related to the Riemann ones. In section 5,  slightly different modified (dual) Caputo fracctional differences are introduced and investigated under  some dual identities.
Section 6 is devoted to the integration by parts for delta and nabla Caputo fractional differences. Finally, Section 7 contains Caputo type fractional dynamical equations where  a nonhomogeneous nabla Caputo fractional difference equation is solved to obtain nabla discrete versions for Mittag-Leffler functions. For the case $\alpha =1$ we obtain the discrete nabla exponential function \cite{Adv}. In addition to this, the Q-operator is used to relate left and right Caputo fractional differences in the nabla and delta case. The Q-dual identities obtained in this section expose the validity of the definition of delta and nabla right Caputo fractional differences.

For a natural number $n$, the fractional polynomial is defined by,

 \begin{equation} \label{fp}
 t^{(n)}=\prod_{j=0}^{n-1} (t-j)=\frac{\Gamma(t+1)}{\Gamma(t+1-n)},
 \end{equation}
where $\Gamma$ denotes the special gamma function and the product is
zero when $t+1-j=0$ for some $j$. More generally, for arbitrary
$\alpha$, define
\begin{equation} \label{fpg}
t^{(\alpha)}=\frac{\Gamma(t+1)}{\Gamma(t+1-\alpha)},
\end{equation}
where the convention that division at pole yields zero.
Given that the forward and backward difference operators are defined
by
\begin{equation} \label{fb}
\Delta f(t)=f(t+1)-f(t)\texttt{,}~\nabla f(t)=f(t)-f(t-1)
\end{equation}
respectively, we define iteratively the operators
$\Delta^m=\Delta(\Delta^{m-1})$ and $\nabla^m=\nabla(\nabla^{m-1})$,
where $m$ is a natural number.

 Here are some properties of the  factorial function.

\begin{lem} \label{pfp} (\cite{Ferd})
Assume the following factorial functions are well defined.

(i) $ \Delta t^{(\alpha)}=\alpha  t^{(\alpha-1)}$.

(ii) $(t-\mu)t^{(\mu)}= t^{(\mu+1)}$, where $\mu \in \mathbb{R}$.

(iii) $\mu^{(\mu)}=\Gamma (\mu+1)$.

(iv) If $t\leq r$, then $t^{(\alpha)}\leq r^{(\alpha)}$ for any
$\alpha>r$.

(v) If $0<\alpha<1$, then $ t^{(\alpha\nu)}\geq
(t^{(\nu)})^\alpha$.

(vi) $t^{(\alpha+\beta)}= (t-\beta)^{(\alpha)} t^{(\beta)}$.
\end{lem}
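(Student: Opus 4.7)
The plan is to verify each of the six identities directly from the definition $t^{(\alpha)}=\Gamma(t+1)/\Gamma(t+1-\alpha)$, using only the functional equation $\Gamma(z+1)=z\Gamma(z)$, the value $\Gamma(1)=1$, the convention ``division at a pole yields zero'', and, for the inequalities, standard positivity/log-convexity of $\Gamma$.

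For (iii) I would just plug in: $\mu^{(\mu)}=\Gamma(\mu+1)/\Gamma(1)=\Gamma(\mu+1)$. For (vi) I would write
\[
(t-\beta)^{(\alpha)}\,t^{(\beta)}=\frac{\Gamma(t-\beta+1)}{\Gamma(t-\beta+1-\alpha)}\cdot\frac{\Gamma(t+1)}{\Gamma(t+1-\beta)},
\]
note the numerator and denominator cancellation $\Gamma(t-\beta+1)=\Gamma(t+1-\beta)$, and recognise the remaining expression as $t^{(\alpha+\beta)}$. For (ii) the same trick works: $(t-\mu)\Gamma(t+1-\mu)=\Gamma(t-\mu)\cdot(t-\mu)$ is not quite right; instead I pull $(t-\mu)$ into the denominator via $\Gamma(t+1-\mu)=(t-\mu)\Gamma(t-\mu)$, which gives $(t-\mu)\,t^{(\mu)}=\Gamma(t+1)/\Gamma(t-\mu)=t^{(\mu+1)}$. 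For (i) I would expand $\Delta t^{(\alpha)}=(t+1)^{(\alpha)}-t^{(\alpha)}$, use $\Gamma(t+2)=(t+1)\Gamma(t+1)$ and $\Gamma(t+2-\alpha)=(t+1-\alpha)\Gamma(t+1-\alpha)$ to factor out $\Gamma(t+1)/\Gamma(t+2-\alpha)$, and simplify the bracket $(t+1)-(t+1-\alpha)=\alpha$ to obtain $\alpha\,\Gamma(t+1)/\Gamma(t+2-\alpha)=\alpha\,t^{(\alpha-1)}$.

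For the monotonicity statement (iv), the cleanest route is to use (i): since $\Delta s^{(\alpha)}=\alpha s^{(\alpha-1)}$, it suffices to show $s^{(\alpha-1)}\geq 0$ for all $s$ in the range $[t,r]$ (i.e.\ that both $\Gamma(s+1)$ and $\Gamma(s+2-\alpha)$ share a sign). The hypothesis $\alpha>r\geq s$ places $s+1-\alpha<1$ and $s+1>0$ in regions where the sign analysis is straightforward, and a telescoping sum $r^{(\alpha)}-t^{(\alpha)}=\sum_{s=t}^{r-1}\alpha\,s^{(\alpha-1)}$ then delivers the inequality (with the obvious modification if $t,r$ are not integer-spaced, by integrating $\Delta$ in the time-scale sense).

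The main obstacle is (v), the concavity-type inequality $t^{(\alpha\nu)}\geq (t^{(\nu)})^{\alpha}$ for $0<\alpha<1$. Taking logarithms, this is equivalent to
\[
\log\Gamma(t+1)-\log\Gamma(t+1-\alpha\nu)\;\geq\;\alpha\bigl[\log\Gamma(t+1)-\log\Gamma(t+1-\nu)\bigr],
\]
i.e.\ $(1-\alpha)\log\Gamma(t+1)+\alpha\log\Gamma(t+1-\nu)\geq \log\Gamma(t+1-\alpha\nu)$. Since $t+1-\alpha\nu=(1-\alpha)(t+1)+\alpha(t+1-\nu)$, this is exactly the log-convexity (Bohr--Mollerup) of $\Gamma$ on $(0,\infty)$, and so the inequality follows provided the arguments are positive. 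I would therefore simply invoke log-convexity of $\Gamma$ and record the necessary positivity hypotheses on $t$, $\nu$, $\alpha$; as the lemma is attributed to \cite{Ferd}, I would finally defer the boundary/sign subtleties to that reference.
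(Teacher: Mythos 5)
The paper does not prove this lemma at all: it is imported verbatim from \cite{Ferd}, so there is no in-paper argument to compare against. Judged on its own, your direct verification from $t^{(\alpha)}=\Gamma(t+1)/\Gamma(t+1-\alpha)$ is the right approach, and parts (i), (ii), (iii) and (vi) are complete and correct: the use of $\Gamma(z+1)=z\Gamma(z)$ to extract the common factor $\Gamma(t+1)/\Gamma(t+2-\alpha)$ in (i), and the cancellation $\Gamma(t-\beta+1)=\Gamma(t+1-\beta)$ in (vi), are exactly the standard computations. Part (v) via log-convexity of $\Gamma$, using $t+1-\alpha\nu=(1-\alpha)(t+1)+\alpha(t+1-\nu)$, is also correct provided $t+1-\nu>0$, so that all three arguments lie in $(0,\infty)$ where Bohr--Mollerup applies and $t^{(\nu)}>0$ makes the $\alpha$-th power and the logarithm meaningful; you flag this, which is reasonable given the lemma's blanket hypothesis that the factorial functions are ``well defined''.

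The one genuine gap is (iv). Your telescoping identity $r^{(\alpha)}-t^{(\alpha)}=\sum_{s=t}^{r-1}\alpha\,s^{(\alpha-1)}$ only exists when $r-t$ is a positive integer; for general real $t\leq r$ there is no ``integration of $\Delta$ in the time-scale sense'' available, because $\Delta$ is a fixed unit-step operator and (i) says nothing about increments smaller than $1$. (In the discrete setting of this paper the relevant points do differ by integers, so the restriction is harmless in practice, but the lemma as stated does not impose it.) Moreover, the nonnegativity of $s^{(\alpha-1)}=\Gamma(s+1)/\Gamma(s+2-\alpha)$ on the intermediate points is not automatic from $\alpha>r\geq s$: you would additionally need $s+1>0$, and when $s+2-\alpha<0$ the sign of $\Gamma(s+2-\alpha)$ alternates between consecutive poles, so ``the sign analysis is straightforward'' has to be replaced by an actual case analysis or by an explicit restriction to the range where $s+2-\alpha>0$. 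As written, (iv) is asserted rather than proved; the rest of the proposal stands.
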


Also, for our purposes we list down the following two properties, the proofs of which are straightforward.

\begin{equation} \label{ou1}
\nabla_s (s-t)^{(\alpha-1)}=(\alpha-1)(\rho(s)-t)^{(\alpha-2)}.
\end{equation}

\begin{equation} \label{ou2}
\nabla_t
(\rho(s)-t)^{(\alpha-1)}=-(\alpha-1)(\rho(s)-t)^{(\alpha-2)}.
\end{equation}

For the sake of the nabla fractional calculus we have the following definition

\begin{defn} \label{rising}(\cite{Adv,Boros,Grah,Spanier})

(i) For a natural number $m$, the $m$ rising (ascending) factorial of $t$ is defined by

\begin{equation}\label{rising 1}
    t^{\overline{m}}= \prod_{k=0}^{m-1}(t+k),~~~t^{\overline{0}}=1.
\end{equation}

(ii) For any real number the $\alpha$ rising function is defined by
\begin{equation}\label{alpharising}
 t^{\overline{\alpha}}=\frac{\Gamma(t+\alpha)}{\Gamma(t)},~~~t \in \mathbb{R}-~\{...,-2,-1,0\},~~0^{\mathbb{\alpha}}=0
\end{equation}

\end{defn}

Regarding the rising factorial function we observe the following:

(i) \begin{equation}\label{oper}
    \nabla (t^{\overline{\alpha}})=\alpha t^{\overline{\alpha-1}}
\end{equation}

 (ii)
 \begin{equation}\label{oper2}
    (t^{\overline{\alpha}})=(t+\alpha-1)^{(\alpha)}.
 \end{equation}

(iii)
\begin{equation}\label{oper3}
   \Delta_t (s-\rho(t))^{\overline{\alpha}}= -\alpha  (s-\rho(t))^{\overline{\alpha-1}}
\end{equation}

\textbf{Notation}:
\begin{enumerate}
\item[$(i)$] For a real $\alpha>0$, we set $n=[\alpha]+1$, where $[\alpha]$ is the greatest integer less than $\alpha$.

\item[$(ii)$] For real numbers $a$ and $b$, we denote $\mathbb{N}_a=\{a,a+1,...\}$ and $~_{b}\mathbb{N}=\{b,b-1,...\}$.

\item[$(iii)$]For $n \in \mathbb{N}$ and real $a$, we denote
$$ _{\circleddash}\Delta^n f(t)\triangleq (-1)^n\Delta^n f(t).$$
\item[$(iv)$]For $n \in \mathbb{N}$ and real $b$, we denote
                   $$ \nabla_{\circleddash}^n f(t)\triangleq (-1)^n\nabla^n f(t).$$
\end{enumerate}

\section{Definitions and essential lemmas}

\begin{defn} \label{fractional sums}
Let $\sigma(t)=t+1$ and $\rho(t)=t-1$ be the forward and backward jumping operators, respectively. Then

(i) The (delta) left fractional sum of order $\alpha>0$ (starting from $a$) is defined by:
\begin{equation}\label{dls}
    \Delta_a^{-\alpha} f(t)=\frac{1}{\Gamma(\alpha)} \sum_{s=a}^{t-\alpha}(t-\sigma(s))^{(\alpha-1)}f(s),~~t \in \mathbb{N}_{a+\alpha}.
\end{equation}

(ii) The (delta) right fractional sum of order $\alpha>0$ (ending at  $b$) is defined by:
\begin{equation}\label{drs}
   ~_{b}\Delta^{-\alpha} f(t)=\frac{1}{\Gamma(\alpha)} \sum_{s=t+\alpha}^{b}(s-\sigma(t))^{(\alpha-1)}f(s)=\frac{1}{\Gamma(\alpha)} \sum_{s=t+\alpha}^{b}(\rho(s)-t)^{(\alpha-1)}f(s),~~t \in ~_{b-\alpha}\mathbb{N}.
\end{equation}

(iii) The (nabla) left fractional sum of order $\alpha>0$ (starting from $a$) is defined by:
\begin{equation}\label{nlf}
  \nabla_a^{-\alpha} f(t)=\frac{1}{\Gamma(\alpha)} \sum_{s=a+1}^t(t-\rho(s))^{\overline{\alpha-1}}f(s),~~t \in \mathbb{N}_{a+1}.
\end{equation}

(iv)The (nabla) right fractional sum of order $\alpha>0$ (ending at $b$) is defined by:
\begin{equation}\label{nrs}
   ~_{b}\nabla^{-\alpha} f(t)=\frac{1}{\Gamma(\alpha)} \sum_{s=t}^{b-1}(s-\rho(t))^{\overline{\alpha-1}}f(s)=\frac{1}{\Gamma(\alpha)} \sum_{s=t}^{b-1}(\sigma(s)-t)^{\overline{\alpha-1}}f(s),~~t \in ~_{b-1}\mathbb{N}.
\end{equation}
\end{defn}

Regarding the delta left fractional sum we observe the following:

(i) $\Delta_a^{-\alpha}$ maps functions defined on $\mathbb{N}_a$ to
functions defined on $\mathbb{N}_{a+\alpha}$.

(ii) $u(t)=\Delta_a^{-n}f(t)$, $n \in \mathbb{N}$, satisfies the
initial value problem
\begin{equation} \label{ivpf}
\Delta^n u(t)=f(t),~~t\in N_a,~u(a+j-1)=0,~ j=1,2,...,n.
\end{equation}

(iii) The Cauchy function $\frac{(t-\sigma(s))^{(n-1)}}{(n-1)!}$
vanishes at $s=t-(n-1),...,t-1$.

\indent

Regarding the delta right fractional sum we observe the following:

(i)  $~_{b}\Delta^{-\alpha}$ maps functions defined on $_{b}\mathbb{N}$ to
functions defined on $_{b-\alpha}\mathbb{N}$.

(ii) $u(t)=~_{b}\Delta^{-n}f(t)$, $n \in \mathbb{N}$, satisfies the
initial value problem
\begin{equation} \label{ivpb}
\nabla_\ominus^n u(t)=f(t),~~t\in~ _{b}N,~u(b-j+1)=0,~ j=1,2,...,n.
\end{equation}

(iii) the Cauchy function $\frac{(\rho(s)-t)^{(n-1)}}{(n-1)!}$
vanishes at $s=t+1,t+2,...,t+(n-1)$.

\indent

Regarding the nabla left fractional sum we observe the following:

(i) $ \nabla_a^{-\alpha}$ maps functions defined on $\mathbb{N}_a$ to functions defined on $\mathbb{N}_{a}$.

(ii)$ \nabla_a^{-n}f(t)$ satisfies the n-th order discrete initial value problem

\begin{equation}\label{s1}
    \nabla^n y(t)=f(t),~~~\nabla^i y(a)=0,~~i=0,1,...,n-1
\end{equation}

(iii) The Cauchy function $\frac{(t-\rho(s))^{\overline{n-1}}}{\Gamma(n)}$ satisfies $\nabla^n y(t)=0$.

\indent

Regarding the nabla right fractional sum we observe the following:

(i) $ ~_{b}\nabla^{-\alpha}$ maps functions defined on $~_{b}\mathbb{N}$ to functions defined on $~_{b}\mathbb{N}$.

(ii)$ ~_{b}\nabla^{-n}f(t)$ satisfies the n-th order discrete initial value problem

\begin{equation}\label{s2}
    ~_{\ominus}\Delta^n y(t)=f(t),~~~ ~_{\ominus}\Delta^i y(b)=0,~~i=0,1,...,n-1.
\end{equation}
The proof can be done inductively. Namely, assuming it is true for $n$, we have
\begin{equation}\label{t1}
    ~_{\ominus}\Delta^{n+1} ~_{b}\nabla^{-(n+1)}f(t)=~_{\ominus}\Delta^{n}[-\Delta ~_{b}\nabla^{-(n+1)}f(t)].
\end{equation}

By the help of (\ref{oper3}), it follows that
\begin{equation}\label{t2}
  ~_{\ominus}\Delta^{n+1} ~_{b}\nabla^{-(n+1)}f(t)=  ~_{\ominus}\Delta^{n} ~_{b}\nabla^{-n}f(t)=f(t).
\end{equation}
The other part is clear by using the convention that $\sum_{k=s+1}^s=0$.

(iii) The Cauchy function $\frac{(s-\rho(t))^{\overline{n-1}}}{\Gamma(n)}$ satisfies $_{\ominus}\Delta^n y(t)=0$.

\indent

\begin{defn} \label{fractional differences}
(i)\cite{Miller}  The (delta) left fractional difference of order $\alpha>0$ (starting from $a$ ) is defined by:
\begin{equation}\label{dls}
    \Delta_a^{\alpha} f(t)=\Delta^n \Delta_a^{-(n-\alpha)} f(t)= \frac{\Delta^n}{\Gamma(n-\alpha)} \sum_{s=a}^{t-(n-\alpha)}(t-\sigma(s))^{(n-\alpha-1)}f(s),~~t \in \mathbb{N}_{a+(n-\alpha)}
\end{equation}

(ii) \cite{TDbyparts} The (delta) right fractional difference of order $\alpha>0$ (ending at  $b$ ) is defined by:
\begin{equation}\label{drd}
   ~_{b}\Delta^{\alpha} f(t)=  \nabla_{\circleddash}^n  ~_{b}\Delta^{-(n-\alpha)}f(t)=\frac{(-1)^n \nabla ^n}{\Gamma(n-\alpha)} \sum_{s=t+(n-\alpha)}^{b}(s-\sigma(t))^{(n-\alpha-1)}f(s),~~t \in ~_{b-(n-\alpha)}\mathbb{N}
\end{equation}

(iii) The (nabla) left fractional difference of order $\alpha>0$ (starting from $a$ ) is defined by:
\begin{equation}\label{nld}
  \nabla_a^{\alpha} f(t)=\nabla^n \nabla_a^{-(n-\alpha)}f(t)= \frac{\nabla^n}{\Gamma(n-\alpha)} \sum_{s=a+1}^t(t-\rho(s))^{\overline{n-\alpha-1}}f(s),~~t \in \mathbb{N}_{a+1}
\end{equation}

(iv) The (nabla) right fractional difference of order $\alpha>0$ (ending at $b$ ) is defined by:
\begin{equation}\label{nrd}
   ~_{b}\nabla^{\alpha} f(t)= ~_{\circleddash}\Delta^n ~_{b}\nabla^{-(n-\alpha)}f(t) =\frac{(-1)^n\Delta^n}{\Gamma(n-\alpha)} \sum_{s=t}^{b-1}(s-\rho(t))^{\overline{n-\alpha-1}}f(s),~~t \in ~ _{b-1}\mathbb{N}
\end{equation}

\end{defn}

Regarding the domains of the fractional type differences we observe:

(i) The delta left fractional difference $\Delta_a^\alpha$ maps functions defined on $\mathbb{N}_a$ to functions defined on $\mathbb{N}_{a+(n-\alpha)}$.

(ii) The delta right fractional difference $~_{b}\Delta^\alpha$ maps functions defined on $~_{b}\mathbb{N}$ to functions defined on $~_{b-(n-\alpha)}\mathbb{N}$.

(iii) The nabla left fractional difference $\nabla_a^\alpha$ maps functions defined on $\mathbb{N}_a$ to functions defined on $\mathbb{N}_{a+n}$ (on $\mathbb{N}_a$ if we think $f$ defined at some points before $a$).

(iv)  The nabla right fractional difference $~_{b}\nabla^\alpha$ maps functions defined on $~_{b}\mathbb{N}$ to functions defined on $~_{b-n}\mathbb{N}$ (on $_{b}\mathbb{N}$ if we think $f$ defined at some points after $b$).

 \begin{lem} \label{ATO} \cite{Ferd}
 For any $\alpha >0$, the following equality holds:
 $$ \Delta_a^{-\alpha} \Delta f(t)=  \Delta \Delta_a^{-\alpha}f(t)-\frac{(t-a)^{\overline{\alpha-1}}}  {\Gamma(\alpha)} f(a).$$
 \end{lem}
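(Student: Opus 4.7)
The plan is to expand both sides as explicit sums using Definition~\ref{fractional sums}(i), apply one discrete summation by parts, and reconcile the boundary contributions with the identities in Lemma~\ref{pfp}. This is essentially a discrete analogue of the continuous identity $\int_a^t (t-s)^{\alpha-1}f'(s)\,ds = \tfrac{d}{dt}\int_a^t (t-s)^{\alpha-1}f(s)\,ds - (t-a)^{\alpha-1}f(a)/\alpha$, obtained there by integration by parts.

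First I would rewrite
$$\Delta_a^{-\alpha}\Delta f(t)=\frac{1}{\Gamma(\alpha)}\sum_{s=a}^{t-\alpha}(t-\sigma(s))^{(\alpha-1)}\bigl(f(s+1)-f(s)\bigr),$$
shift the summation index in the $f(s+1)$-part, and thereby split the expression into two sums of the form $\sum(t-s)^{(\alpha-1)}f(s)$ and $\sum(t-s-1)^{(\alpha-1)}f(s)$ over slightly different ranges. On the overlap, the kernels combine via the factorial identity $(t-s)^{(\alpha-1)}-(t-s-1)^{(\alpha-1)}=(\alpha-1)(t-s-1)^{(\alpha-2)}$, which is Lemma~\ref{pfp}(i) applied with $x=t-s-1$. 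The shift also produces a single end-term at $s=t-\alpha+1$, where the factor $(\alpha-1)^{(\alpha-1)}=\Gamma(\alpha)$ cancels the normalisation and leaves simply $f(t-\alpha+1)$, and a lower end-term at $s=a$ contributing $-(t-a-1)^{(\alpha-1)}f(a)/\Gamma(\alpha)$.

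Next I would compute $\Delta\Delta_a^{-\alpha}f(t)=\Delta_a^{-\alpha}f(t+1)-\Delta_a^{-\alpha}f(t)$ \emph{directly}, taking care that the upper summation limit advances by one when $t$ is replaced by $t+1$. The same factorial identity $(t-s)^{(\alpha-1)}-(t-s-1)^{(\alpha-1)}=(\alpha-1)(t-s-1)^{(\alpha-2)}$ turns the overlapping portion into the same telescoped sum as above, while the extra term at $s=t+1-\alpha$ reproduces the boundary contribution $f(t+1-\alpha)$ found in the first calculation. Subtracting, the interior sums and the $f(t+1-\alpha)$ terms cancel exactly and what remains is concentrated entirely at $s=a$, namely a multiple of $f(a)$ with coefficient $-\bigl[(t-a-1)^{(\alpha-1)}+(\alpha-1)(t-a-1)^{(\alpha-2)}\bigr]/\Gamma(\alpha)$.

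The main obstacle, and the final algebraic step, will be collapsing those two factorial pieces into a single $(t-a)^{(\alpha-1)}$. This follows from Lemma~\ref{pfp}(vi) with $\beta=-1$, i.e.\ the identity $(x+1)\,x^{(\alpha-2)}=(x+1)^{(\alpha-1)}$ with $x=t-a-1$, after rewriting $\alpha-1+(t-a-\alpha+1)=t-a$. Beyond this coalescence, the remaining difficulty is purely bookkeeping: since both $\Delta$ and $\Delta_a^{-\alpha}$ shift indices and the kernel $(t-\sigma(s))^{(\alpha-1)}$ itself changes when $t\to t+1$, one must track the two endpoint contributions carefully to see the cancellation at the upper limit and the non-cancellation at $s=a$ that produces the displayed correction term.
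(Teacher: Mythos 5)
The paper gives no proof of this lemma at all --- it is simply quoted from \cite{Ferd} --- so there is nothing internal to compare your argument against; judged on its own, your derivation is correct, complete, and is in fact the standard one: expand both sides, telescope the kernels with Lemma \ref{pfp}(i), and track the two endpoint contributions. Every step checks: the upper end-term $(\alpha-1)^{(\alpha-1)}=\Gamma(\alpha)$ produces the same $f(t+1-\alpha)$ in both expansions, the unmatched $s=a$ terms survive, and the coalescence $x^{(\alpha-1)}+(\alpha-1)x^{(\alpha-2)}=(x+1)\,x^{(\alpha-2)}=(x+1)^{(\alpha-1)}$ with $x=t-a-1$ is valid (though the identity you want is Lemma \ref{pfp}(vi) with $\beta=1$, i.e.\ $(x+1)^{(\alpha-1)}=x^{(\alpha-2)}(x+1)^{(1)}$, not $\beta=-1$; equivalently Lemma \ref{pfp}(ii) does the job). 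One substantive remark: your computation yields the \emph{falling} factorial $(t-a)^{(\alpha-1)}$ in the correction term, while the statement as printed carries the rising factorial $(t-a)^{\overline{\alpha-1}}$. These are genuinely different, since by (\ref{oper2}) one has $(t-a)^{\overline{\alpha-1}}=(t-a+\alpha-2)^{(\alpha-1)}$; a spot check at $t=a+\alpha$ gives $\Delta_a^{-\alpha}\Delta f(t)-\Delta\Delta_a^{-\alpha}f(t)=-\alpha f(a)$, which matches $(t-a)^{(\alpha-1)}/\Gamma(\alpha)=\alpha$ but not $(t-a)^{\overline{\alpha-1}}/\Gamma(\alpha)=\Gamma(2\alpha-1)/\Gamma(\alpha)^2$. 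So your proof establishes the correct (Atici--Eloe) falling-factorial form of the lemma and in doing so exposes a typo in the displayed statement rather than a gap in your argument.
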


 \begin{lem} \label{TD} \cite{TDbyparts}
 For any $\alpha >0$, the following equality holds:
 $$~_{b} \Delta^{-\alpha} \nabla_{\circleddash} f(t)=  \nabla_{\circleddash} ~_{b} \Delta^{-\alpha}f(t)-\frac{(b-t)^{\overline{\alpha-1}}}  {\Gamma(\alpha)} f(b).$$
 \end{lem}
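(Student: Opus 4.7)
I will prove this by direct Abel-summation on the definition. Set $F(t) := {}_b\Delta^{-\alpha}f(t) = \Gamma(\alpha)^{-1}\sum_{s=t+\alpha}^{b}(\rho(s)-t)^{(\alpha-1)}f(s)$ from Definition~\ref{fractional sums}(ii); I will evaluate $\nabla_{\circleddash}F(t) = F(t-1)-F(t)$ and ${}_b\Delta^{-\alpha}\nabla_{\circleddash}f(t)$ separately and show their difference is the advertised boundary factor on $f(b)$. The algebraic workhorse will be the kernel identity $(s-t)^{(\alpha-1)} - (\rho(s)-t)^{(\alpha-1)} = (\alpha-1)(\rho(s)-t)^{(\alpha-2)}$, which is Lemma~\ref{pfp}(i) applied to the falling factorial with argument $u = s-1-t$.

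For $\nabla_{\circleddash}F(t)$, the observation $\rho(s)-t+1 = s-t$ rewrites the kernel of $F(t-1)$ as $(s-t)^{(\alpha-1)}$ on the range $s = t+\alpha-1,\dots,b$. Extracting the new lowest-index summand (whose coefficient $(\alpha-1)^{(\alpha-1)} = \Gamma(\alpha)$ collapses to $f(t+\alpha-1)$) and applying the kernel identity to the overlap $s = t+\alpha,\dots,b$ gives $\nabla_{\circleddash}F(t) = f(t+\alpha-1) + \tfrac{\alpha-1}{\Gamma(\alpha)}\sum_{s=t+\alpha}^{b}(\rho(s)-t)^{(\alpha-2)}f(s)$. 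For ${}_b\Delta^{-\alpha}\nabla_{\circleddash}f(t) = -\Gamma(\alpha)^{-1}\sum_{s=t+\alpha}^{b}(\rho(s)-t)^{(\alpha-1)}[f(s)-f(s-1)]$, the reindex $s \mapsto s+1$ in the $f(s-1)$ piece turns its kernel into $(s-t)^{(\alpha-1)}$ on $s = t+\alpha-1,\dots,b-1$. The same kernel identity collapses the overlap on $s = t+\alpha,\dots,b-1$, while the extracted endpoints contribute $+f(t+\alpha-1)$ at the low end and $-\Gamma(\alpha)^{-1}(b-1-t)^{(\alpha-1)}f(b)$ at the high end, so that ${}_b\Delta^{-\alpha}\nabla_{\circleddash}f(t) = f(t+\alpha-1) + \tfrac{\alpha-1}{\Gamma(\alpha)}\sum_{s=t+\alpha}^{b-1}(\rho(s)-t)^{(\alpha-2)}f(s) - \tfrac{(b-1-t)^{(\alpha-1)}}{\Gamma(\alpha)}f(b)$.

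Subtracting, the $f(t+\alpha-1)$ contributions cancel and the two interior $(\alpha-1)$-order sums differ only by a single summand at $s = b$, leaving a residue $\Gamma(\alpha)^{-1}\bigl[(\alpha-1)(b-1-t)^{(\alpha-2)} + (b-1-t)^{(\alpha-1)}\bigr]f(b)$. Lemma~\ref{pfp}(ii), rewritten as $(b-1-t)^{(\alpha-1)} = (b-t-\alpha+1)(b-1-t)^{(\alpha-2)}$, reduces the bracket to $(b-t)(b-1-t)^{(\alpha-2)}$, which by the Gamma recursion equals $(b-t)^{(\alpha-1)}$, i.e.\ the stated boundary factor after relating falling and rising factorials via identity~(\ref{oper2}). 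The main obstacle is the careful bookkeeping of four separate boundary contributions --- two at $s = t+\alpha-1$ which must cancel and two at $s = b$ which must fuse into the residual factor --- while respecting the domain restrictions in Definition~\ref{fractional sums}(ii) whenever an index shift is performed. A structural shortcut would be to apply the Q-operator of Section~5 to transport Lemma~\ref{ATO} into this statement, but that creates a forward reference and obscures the provenance of the boundary factor.
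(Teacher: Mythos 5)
The paper gives no proof of this lemma---it is simply quoted from \cite{TDbyparts}---so there is no internal argument to compare against; your direct Abel summation is the natural route, and your bookkeeping is correct almost to the end. The two expansions
$\nabla_{\circleddash}\,{}_{b}\Delta^{-\alpha}f(t)=f(t+\alpha-1)+\frac{\alpha-1}{\Gamma(\alpha)}\sum_{s=t+\alpha}^{b}(\rho(s)-t)^{(\alpha-2)}f(s)$
and
${}_{b}\Delta^{-\alpha}\nabla_{\circleddash}f(t)=f(t+\alpha-1)+\frac{\alpha-1}{\Gamma(\alpha)}\sum_{s=t+\alpha}^{b-1}(\rho(s)-t)^{(\alpha-2)}f(s)-\frac{(b-1-t)^{(\alpha-1)}}{\Gamma(\alpha)}f(b)$
both check out, as does the fusion of the residue into $(b-t)(b-1-t)^{(\alpha-2)}=(b-t)^{(\alpha-1)}$. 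The genuine gap is your final sentence. What you have actually proved is
${}_{b}\Delta^{-\alpha}\nabla_{\circleddash}f(t)=\nabla_{\circleddash}\,{}_{b}\Delta^{-\alpha}f(t)-\frac{(b-t)^{(\alpha-1)}}{\Gamma(\alpha)}f(b)$
with a \emph{falling} factorial, and identity~(\ref{oper2}) does not convert this into the printed rising factorial: (\ref{oper2}) gives $(b-t)^{\overline{\alpha-1}}=(b-t+\alpha-2)^{(\alpha-1)}$, which is not $(b-t)^{(\alpha-1)}$ except when $\alpha-1\in\{0,1\}$. A concrete check with $\alpha=3$, $b=3$, $t=0$: the left and right sides of the identity differ by $-3f(3)$, i.e.\ $(b-t)^{(2)}/\Gamma(3)=3$, whereas the printed coefficient would be $(b-t)^{\overline{2}}/\Gamma(3)=6$. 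So the equality you assert in the last step is false, and as written the proof does not establish the statement in the form displayed.

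The discrepancy is not in your arithmetic but in the statement itself: the overline on the boundary coefficient is evidently a transcription slip. The delta right sum has the falling-factorial kernel $(\rho(s)-t)^{(\alpha-1)}$, so its boundary term must be a falling factorial as well; the rising-factorial boundary term belongs to the nabla analogue, Lemma~\ref{RN}, whose kernel is $(s-\rho(t))^{\overline{\alpha-1}}$. (The same overline has crept into the companion Lemma~\ref{ATO}, whose correct form, as in \cite{Ferd}, also carries $(t-a)^{(\alpha-1)}$.) The honest fix is to prove the corrected statement and flag the typo explicitly, rather than to paper over the mismatch with an appeal to~(\ref{oper2}) that does not hold. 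Your remark about the Q-operator shortcut is fine as an aside, but it would transport the same falling factorial and so would expose the same discrepancy.
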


\begin{lem} \label{At} \cite{Gronwall}
For any $\alpha >0$, the following equality holds:
\begin{equation} \label{At1}
\nabla _{a+1}^{-\alpha} \nabla f(t)= \nabla \nabla_a^{-\alpha}f(t)-\frac{(t-a+1)^{\overline{\alpha-1}}  }{\Gamma(\alpha)}f(a)
\end{equation}
\end{lem}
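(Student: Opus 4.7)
The plan is to prove the identity by direct expansion of both sides and careful reindexing. Using Definition \ref{fractional sums}(iii), the left-hand side is
$$\Gamma(\alpha)\,\nabla_{a+1}^{-\alpha}\nabla f(t) \;=\; \sum_{s} (t-\rho(s))^{\overline{\alpha-1}}\bigl(f(s)-f(s-1)\bigr),$$
which I would split into two sums, reindexing the second by $k=s-1$ so that its kernel $(t-\rho(s))^{\overline{\alpha-1}}=(t-s+1)^{\overline{\alpha-1}}$ becomes $(t-k)^{\overline{\alpha-1}}$. On the right-hand side I would write $\nabla \nabla_a^{-\alpha}f(t)=\nabla_a^{-\alpha}f(t)-\nabla_a^{-\alpha}f(t-1)$ and expand using the same definition, noting that the kernel of $\nabla_a^{-\alpha}f(t-1)$ is $((t-1)-\rho(s))^{\overline{\alpha-1}}=(t-s)^{\overline{\alpha-1}}$, which matches the reindexed second sum from the LHS. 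The bulk terms then cancel in the difference, leaving only a boundary contribution from the mismatched lower endpoint; tracking the coefficient $(t-\rho(a))^{\overline{\alpha-1}}=(t-a+1)^{\overline{\alpha-1}}$ attached to $f(a)$ yields the stated correction.

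An alternative, more conceptual route, paralleling the arguments behind Lemmas \ref{ATO} and \ref{TD}, is to use the nabla summation-by-parts formula
$$\sum_{s=a+1}^{t} u(s)\,\nabla v(s) \;=\; u(t)v(t)-u(a)v(a)-\sum_{s=a+1}^{t}\nabla u(s)\,v(s-1),$$
obtained by telescoping the product rule $\nabla(uv)(s)=u(s)\nabla v(s)+\nabla u(s)\,v(s-1)$, and to apply it with $u(s)=(t-\rho(s))^{\overline{\alpha-1}}$ and $v(s)=f(s)$. Since $u(t)=1^{\overline{\alpha-1}}=\Gamma(\alpha)$ and $u(a)=(t-a+1)^{\overline{\alpha-1}}$, the boundary evaluations drop out the $\Gamma(\alpha)f(t)$ term and precisely the correction $(t-a+1)^{\overline{\alpha-1}}f(a)$ of the claim. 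The remaining sum involves $\nabla_s u(s)$, which is computed from the elementary gamma-function identity $(u+1)^{\overline{\alpha-1}}-u^{\overline{\alpha-1}}=(\alpha-1)(u+1)^{\overline{\alpha-2}}$; after a one-step reindexing by $k=s-1$, it is recognisable as $\Gamma(\alpha)\,\nabla_a^{-\alpha}f(t-1)$, and combining with the $\Gamma(\alpha)f(t)$ term produces $\Gamma(\alpha)\,\nabla\nabla_a^{-\alpha}f(t)$.

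The main obstacle is the careful bookkeeping of summation indices and the one-step shifts introduced by $\rho$ and $\nabla$: the summation range $s\in\{a+1,\dots,t\}$ for $\nabla_a^{-\alpha}$ versus $s\in\{a+2,\dots,t\}$ for $\nabla_{a+1}^{-\alpha}$ is exactly where the correction term arises, and one has to be careful to ensure that the right endpoint $(t-\rho(a))^{\overline{\alpha-1}}$ pops out rather than $(t-a)^{\overline{\alpha-1}}$. Aside from this, the remaining manipulations reduce to straightforward applications of Lemma \ref{pfp} and the rising-factorial properties (\ref{oper})--(\ref{oper3}).
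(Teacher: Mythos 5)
There is a genuine gap, and it is a definitional one. Lemma \ref{At} is quoted from \cite{Gronwall}, and the paper states immediately after it that the result ``was obtained in \cite{Gronwall} by applying the nabla left fractional sum starting from $a$, not from $a+1$'' --- i.e.\ the lemma must be read with the convention (\ref{remind}), $\nabla_c^{-\alpha}f(t)=\frac{1}{\Gamma(\alpha)}\sum_{s=c}^{t}(t-\rho(s))^{\overline{\alpha-1}}f(s)$. You instead invoke Definition \ref{fractional sums}(iii), under which $\nabla_{a+1}^{-\alpha}\nabla f(t)$ sums over $s\in\{a+2,\dots,t\}$ (a range you yourself write down in your last paragraph). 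Under that definition the left-hand side only samples $f$ at $a+1,\dots,t$, so no term in $f(a)$ can appear, and the stated identity is false: for $\alpha=1$ the left-hand side is $f(t)-f(a+1)$ while the right-hand side is $f(t)-f(a)$; the correct correction term under Definition \ref{fractional sums}(iii) is $\frac{(t-a)^{\overline{\alpha-1}}}{\Gamma(\alpha)}f(a+1)$, and the companion statement actually valid in the paper's convention is Lemma \ref{AtT}. Your summation-by-parts computation silently reverts to the convention (\ref{remind}) --- the sum $\sum_{s=a+1}^{t}u(s)\nabla v(s)$ equals $\Gamma(\alpha)\nabla_{a+1}^{-\alpha}\nabla f(t)$ only there, and the boundary term $u(a)v(a)=(t-a+1)^{\overline{\alpha-1}}f(a)$ only arises there --- which is why you land on the stated formula; but as written the proof is internally inconsistent with the definition it cites.

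Two smaller points. First, in the summation-by-parts route the remaining sum $-\sum_{s=a+1}^{t}\nabla u(s)\,f(s-1)=(\alpha-1)\sum_{k=a}^{t-1}(t-k+1)^{\overline{\alpha-2}}f(k)=\sum_{k=a}^{t-1}\bigl[(t-k+1)^{\overline{\alpha-1}}-(t-k)^{\overline{\alpha-1}}\bigr]f(k)$ is \emph{not} equal to $\Gamma(\alpha)\nabla_a^{-\alpha}f(t-1)$; it equals $\Gamma(\alpha)\nabla_a^{-\alpha}f(t)-\Gamma(\alpha)f(t)-\Gamma(\alpha)\nabla_a^{-\alpha}f(t-1)$ (in the convention (\ref{remind})), which is what actually combines with the $\Gamma(\alpha)f(t)$ boundary term to give $\Gamma(\alpha)\nabla\nabla_a^{-\alpha}f(t)$. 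Second, note that the paper offers no proof of this lemma at all --- it is imported from \cite{Gronwall} --- so the fix is not to match a proof in the text but to state explicitly at the outset that all fractional sums in Lemma \ref{At} are taken in the sense of (\ref{remind}); with that declaration, your direct-expansion and summation-by-parts arguments both go through.
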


 The result of Lemma \ref{At}  was obtained in \cite{Gronwall} by applying the nabla left fractional sum starting from $a$ not from $a+1$. Next will provide the version of Lemma \ref{At} by applying the definition in this article. Actually, the nabla fractional sums defined in this article and those in \cite{Gronwall} are related. For more details we refer to \cite{THFer}.

\begin{lem} \label{AtT} (see \cite{THFer} and \cite{Thsh})
For any $\alpha >0$, the following equality holds:
\begin{equation} \label{AtT1}
\nabla _a^{-\alpha} \nabla f(t)= \nabla \nabla_a^{-\alpha}f(t)-\frac{(t-a)^{\overline{\alpha-1}}  }{\Gamma(\alpha)}f(a).
\end{equation}
\end{lem}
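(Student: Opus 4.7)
The plan is to prove the identity by expanding both sides from the definition (\ref{nlf}) and carrying out a single index shift, exactly as one proves the continuous analogue via integration by parts but without invoking a product rule. Summation-by-parts machinery would work, but a direct expansion seems cleanest here because the kernel $(t-\rho(s))^{\overline{\alpha-1}}$ is parametrized by $s$ through $\rho(s)=s-1$, which interacts nicely with the backward difference $\nabla f(s)=f(s)-f(s-1)$.

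First I would write
\[
\nabla_a^{-\alpha}\nabla f(t)=\frac{1}{\Gamma(\alpha)}\sum_{s=a+1}^{t}(t-s+1)^{\overline{\alpha-1}}\bigl[f(s)-f(s-1)\bigr],
\]
split the sum into two pieces, and in the second piece make the substitution $s\mapsto s+1$. That converts the second piece into $\sum_{s=a}^{t-1}(t-s)^{\overline{\alpha-1}}f(s)$, in which the $s=a$ term contributes precisely $(t-a)^{\overline{\alpha-1}}f(a)$. Peeling this off leaves a telescoping-like expression
\[
\nabla_a^{-\alpha}\nabla f(t)=\frac{1}{\Gamma(\alpha)}\Bigl[\sum_{s=a+1}^{t}(t-s+1)^{\overline{\alpha-1}}f(s)-\sum_{s=a+1}^{t-1}(t-s)^{\overline{\alpha-1}}f(s)\Bigr]-\frac{(t-a)^{\overline{\alpha-1}}}{\Gamma(\alpha)}f(a).
\]

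Next I would expand the right side. By definition,
\[
\nabla\nabla_a^{-\alpha}f(t)=\nabla_a^{-\alpha}f(t)-\nabla_a^{-\alpha}f(t-1)=\frac{1}{\Gamma(\alpha)}\Bigl[\sum_{s=a+1}^{t}(t-s+1)^{\overline{\alpha-1}}f(s)-\sum_{s=a+1}^{t-1}(t-s)^{\overline{\alpha-1}}f(s)\Bigr],
\]
which matches the bracketed expression above exactly. Adding $(t-a)^{\overline{\alpha-1}}f(a)/\Gamma(\alpha)$ to both sides yields (\ref{AtT1}).

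The only real obstacle is bookkeeping of the summation endpoints: after the shift $s\mapsto s+1$ in the $f(s-1)$ sum, the lower limit drops from $a+1$ to $a$, and it is essential to track that the emerging $s=a$ term is exactly the boundary correction stated in the lemma, while the $s=t$ term of the $(t-s)^{\overline{\alpha-1}}$ kernel vanishes (since $(0)^{\overline{\alpha-1}}=0$ by the convention in (\ref{alpharising})), so that truncating the upper index from $t$ to $t-1$ is legitimate. Once these endpoint cancellations are verified the identity is immediate, and there is no need to invoke Lemma \ref{At} or the relation between the two conventions for the nabla sum discussed in \cite{THFer}.
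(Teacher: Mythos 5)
Your computation is correct. Expanding $\nabla_a^{-\alpha}\nabla f(t)$ from Definition \ref{fractional sums}(iii), splitting off the $f(s-1)$ part, and shifting the index does turn the second sum into $\sum_{s=a}^{t-1}(t-s)^{\overline{\alpha-1}}f(s)$, whose $s=a$ term is exactly the boundary correction, while the two remaining sums are precisely $\nabla_a^{-\alpha}f(t)-\nabla_a^{-\alpha}f(t-1)=\nabla\nabla_a^{-\alpha}f(t)$; the endpoint bookkeeping also checks out (at $t=a+1$ the empty-sum convention and $(1)^{\overline{\alpha-1}}=\Gamma(\alpha)$ make both sides equal $f(a+1)-f(a)$). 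The paper, however, does not argue this way: it gives no proof of Lemma \ref{AtT} at all, citing \cite{THFer} and \cite{Thsh}, and the surrounding discussion indicates that the intended derivation is to start from Lemma \ref{At} (the result of \cite{Gronwall}, stated for $\nabla_{a+1}^{-\alpha}$ under the alternative convention (\ref{remind})) and translate between the two conventions for the nabla fractional sum. Your direct index-shift argument is therefore a genuinely different and more self-contained route: it uses nothing beyond the definition and makes transparent where the term $\frac{(t-a)^{\overline{\alpha-1}}}{\Gamma(\alpha)}f(a)$ originates (the $s=a$ boundary of the shifted sum), whereas the paper's route buys continuity with the earlier literature at the cost of importing the comparison of conventions worked out in \cite{THFer}.
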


\begin{rem}\label{lforany}(see \cite{THFer} and \cite{Thsh})
Let $\alpha>0$ and $n=[\alpha]+1$. Then, by the help of Lemma
 \ref{AtT} we  have
\begin{equation}\label{lforany1}
\nabla \nabla_a^\alpha f(t)=\nabla \nabla^n(\nabla_a^{-(n-\alpha)}f(t))= \nabla^n (\nabla \nabla_a^{-(n-\alpha)}f(t)).
\end{equation}
or
\begin{equation}\label{lforany11}
\nabla \nabla_a^\alpha f(t)=\nabla^n [\nabla_a^{-(n-\alpha)}\nabla
f(t)+\frac{(t-a)^{\overline{n-\alpha-1}}} {\Gamma(n-\alpha)}f(a)]
\end{equation}
Then, using the identity
\begin{equation}\label{forany2}
\nabla^n \frac{(t-a)^{\overline{n-\alpha-1}}} {\Gamma(n-\alpha)}=
\frac{(t-a)^{\overline{-\alpha-1}}} {\Gamma(-\alpha)}
\end{equation}
we infer that (\ref{AtT1}) is valid for any real $\alpha$.
\end{rem}

By the help of Lemma \ref{AtT}, Remark \ref{lforany} and the identity $\nabla (t-a)^{\overline{\alpha-1}}=(\alpha-1)(t-a)^{\overline{\alpha-2}}$, we arrive inductively at the following generalization.

 \begin{thm} \label{LNg}(see \cite{THFer} and \cite{Thsh})
 For any real number $\alpha$ and any positive integer $p$, the
following equality holds:
\begin{equation} \label{LNg1}
\nabla_a^{-\alpha}~\nabla^p f(t)=\nabla^p
\nabla_a^{-\alpha}f(t)-\sum_{k=0}^{p-1}\frac{(t-a)^{\overline{\alpha-p+k}}}
{\Gamma(\alpha+k-p+1)}\nabla^k f(a).
\end{equation}
where $f$ is defined on $\mathbb{N}_a$ and some points before $a$ .
 \end{thm}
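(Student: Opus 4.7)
The plan is to prove the theorem by induction on the positive integer $p$, using Lemma \ref{AtT} (extended to arbitrary real $\alpha$ via Remark \ref{lforany}) as both the base case and the engine for the inductive step. The base case $p=1$ is precisely the statement of Lemma \ref{AtT} (valid for any real $\alpha$ by the remark), since the sum $\sum_{k=0}^{0}$ contains only the $k=0$ term, which matches $\frac{(t-a)^{\overline{\alpha-1}}}{\Gamma(\alpha)}f(a)$.

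For the inductive step, I would assume the formula holds for some $p\ge 1$ and then write $\nabla^{p+1}f=\nabla^{p}(\nabla f)$. Applying the induction hypothesis to $\nabla f$ in place of $f$ gives
\begin{equation*}
\nabla_a^{-\alpha}\nabla^{p+1}f(t)=\nabla^p\nabla_a^{-\alpha}(\nabla f)(t)-\sum_{k=0}^{p-1}\frac{(t-a)^{\overline{\alpha-p+k}}}{\Gamma(\alpha+k-p+1)}\nabla^{k+1}f(a).
\end{equation*}
Into the first term I would substitute Lemma \ref{AtT} to replace $\nabla_a^{-\alpha}\nabla f(t)$ by $\nabla\nabla_a^{-\alpha}f(t)-\frac{(t-a)^{\overline{\alpha-1}}}{\Gamma(\alpha)}f(a)$, and then pull $\nabla^p$ through the difference; the first piece becomes $\nabla^{p+1}\nabla_a^{-\alpha}f(t)$, while the second requires me to compute $\nabla^p$ of the rising factorial.

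The key routine calculation is that iterating the identity $\nabla(t-a)^{\overline{\alpha-1}}=(\alpha-1)(t-a)^{\overline{\alpha-2}}$ gives $\nabla^p\frac{(t-a)^{\overline{\alpha-1}}}{\Gamma(\alpha)}=\frac{(t-a)^{\overline{\alpha-1-p}}}{\Gamma(\alpha-p)}$, which will supply exactly the missing $k=0$ term. After the shift of summation index $j=k+1$ in the remaining sum, the $j=0$ contribution from the newly produced boundary term combines with $j=1,\dots,p$ to yield $\sum_{j=0}^{p}\frac{(t-a)^{\overline{\alpha-(p+1)+j}}}{\Gamma(\alpha+j-p)}\nabla^j f(a)$, which is precisely the required formula with $p$ replaced by $p+1$.

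The main obstacle is purely bookkeeping: aligning the index shift $k\mapsto k+1$ with the extra $k=0$ boundary term produced by one more application of Lemma \ref{AtT}, and verifying that the exponent $\overline{\alpha-p+k}$ and gamma argument $\Gamma(\alpha+k-p+1)$ transform correctly under $p\mapsto p+1$. Nothing deep is required beyond careful tracking of these indices and one appeal to Remark \ref{lforany} so that Lemma \ref{AtT} may be used freely for arbitrary real $\alpha$ (rather than only $\alpha>0$) throughout the induction.
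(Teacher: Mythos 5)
Your proposal is correct and follows essentially the same route the paper indicates: the paper's own justification is precisely the remark that the result ``follows inductively'' from Lemma \ref{AtT} (extended to arbitrary real $\alpha$ via Remark \ref{lforany}) together with the identity $\nabla (t-a)^{\overline{\alpha-1}}=(\alpha-1)(t-a)^{\overline{\alpha-2}}$, and your write-up simply fills in that induction with the correct index bookkeeping. The key computation $\nabla^p\bigl[(t-a)^{\overline{\alpha-1}}/\Gamma(\alpha)\bigr]=(t-a)^{\overline{\alpha-1-p}}/\Gamma(\alpha-p)$ and the shift $j=k+1$ check out.
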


\begin{lem} \label{RN}(see \cite{THFer} and \cite{Thsh})
For any $\alpha >0$, the following equality holds:
\begin{equation}\label{RN1}
    _{b}\nabla^{-\alpha}~ _{\circleddash}\Delta f(t)= ~ _{\circleddash}\Delta~ _{b}\nabla^{-\alpha} f(t)-\frac{(b-t)^{\overline{\alpha-1}}}  {\Gamma(\alpha)} f(b).
\end{equation}
\end{lem}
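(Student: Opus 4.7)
My plan is to prove the identity by direct computation, paralleling the structure of Lemma \ref{AtT} but in the right-sided nabla setting. The identity is the right-nabla analogue of Lemma \ref{AtT} (and the nabla analogue of Lemma \ref{TD}), so the same two-sum / index-shift pattern should succeed without invoking additional machinery.

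First, I would expand $_{b}\nabla^{-\alpha}\,_{\circleddash}\Delta f(t)$ using the definition (\ref{nrs}) together with $_{\circleddash}\Delta f(s) = f(s) - f(s+1)$, producing a difference of two sums. Applying the index shift $s \mapsto s-1$ in the $f(s+1)$ sum brings both sums onto a common range; this isolates two boundary contributions, one at $s=t$ (which, using $1^{\overline{\alpha-1}}=\Gamma(\alpha)$, simplifies to $f(t)$) and one at $s=b$ (which contributes $-\frac{(b-t)^{\overline{\alpha-1}}}{\Gamma(\alpha)} f(b)$), plus an interior sum.

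Next, I would expand $_{\circleddash}\Delta\,_{b}\nabla^{-\alpha} f(t) = {_{b}\nabla^{-\alpha}} f(t) - {_{b}\nabla^{-\alpha}} f(t+1)$ directly from (\ref{nrs}). After realigning the two resulting sums, only a boundary contribution at $s=t$ survives, yielding $f(t)$ plus an interior sum. The crucial step is to recognize that the interior sums from the two sides coincide term-by-term thanks to the rising-factorial identity
\begin{equation*}
(s-t+1)^{\overline{\alpha-1}} - (s-t)^{\overline{\alpha-1}} = (\alpha-1)(s-t+1)^{\overline{\alpha-2}},
\end{equation*}
which follows from (\ref{oper}).

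Subtracting the two expressions cancels the $f(t)$ endpoint terms and the interior sums, leaving precisely the boundary correction $-\frac{(b-t)^{\overline{\alpha-1}}}{\Gamma(\alpha)} f(b)$ asserted in (\ref{RN1}). The main obstacle is purely bookkeeping: tracking three separate boundary contributions across the index shift and verifying that the $s=t$ endpoints cancel cleanly. A conceptually cleaner route would exploit a Q-operator duality to transfer Lemma \ref{AtT} to the right-nabla case, but that formalism is only introduced in later sections, so the direct summation argument is preferable here.
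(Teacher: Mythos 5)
Your proof is correct, and it is worth noting at the outset that the paper itself offers no proof of Lemma \ref{RN}: it is stated with a citation to \cite{THFer} and \cite{Thsh}, so your direct computation is a self-contained argument the paper omits. The computation checks out: expanding $_{b}\nabla^{-\alpha}\,{}_{\circleddash}\Delta f(t)$ with $_{\circleddash}\Delta f(s)=f(s)-f(s+1)$ and shifting the index in the second sum produces the $s=t$ term $\frac{1}{\Gamma(\alpha)}1^{\overline{\alpha-1}}f(t)=f(t)$, the $s=b$ term $-\frac{(b-t)^{\overline{\alpha-1}}}{\Gamma(\alpha)}f(b)$, and the interior sum $\frac{1}{\Gamma(\alpha)}\sum_{s=t+1}^{b-1}\bigl[(s-t+1)^{\overline{\alpha-1}}-(s-t)^{\overline{\alpha-1}}\bigr]f(s)$; expanding $_{\circleddash}\Delta\,{}_{b}\nabla^{-\alpha}f(t)$ gives $f(t)$ plus the \emph{identical} interior sum, since both summations terminate at $s=b-1$ and only the $s=t$ endpoint is unmatched. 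One small remark: the rising-factorial identity $(s-t+1)^{\overline{\alpha-1}}-(s-t)^{\overline{\alpha-1}}=(\alpha-1)(s-t+1)^{\overline{\alpha-2}}$ that you present as the crucial step is actually superfluous here, because the two interior sums agree term-by-term as written, before any such rewriting; it would only be needed if you chose to differentiate the kernel via (\ref{oper3}) on one side. Your decision to avoid the Q-operator is sound for the reason you give, though an equally short alternative within the paper's toolkit would be to transfer Lemma \ref{TD} to the nabla setting via the duality of Lemma \ref{right dual}(ii).
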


\begin{rem}\label{forany}(see \cite{THFer} and \cite{Thsh})
Let $\alpha>0$ and $n=[\alpha]+1$. Then, by the help of Lemma
 \ref{RN} we can have
\begin{equation}\label{forany1}
~_{\ominus}\Delta ~_{b}\nabla^\alpha f(t)=~_{\ominus}\Delta
~_{\circleddash}\Delta^n(~_{b}\nabla^{-(n-\alpha)}f(t))=~_{\circleddash}\Delta^n (~_{\circleddash}\Delta ~_{b}\nabla^{-(n-\alpha)}f(t))
\end{equation}
or
\begin{equation}\label{forany11}
~_{\circleddash}\Delta ~_{b}\nabla^\alpha f(t)=~_{\circleddash}\Delta^n [~_{b}\nabla^{-(n-\alpha)}~_{\circleddash}\Delta
f(t)+\frac{(b-t)^{\overline{n-\alpha-1}}} {\Gamma(n-\alpha)}f(b)]
\end{equation}
Then, using the identity
\begin{equation}\label{forany2}
~_{\circleddash}\Delta^n \frac{(b-t)^{\overline{n-\alpha-1}}} {\Gamma(n-\alpha)}=
\frac{(b-t)^{\overline{-\alpha-1}}} {\Gamma(-\alpha)}
\end{equation}
we infer that (\ref{RN1}) is valid for any real $\alpha$.
\end{rem}

By the help of Lemma \ref{RN}, Remark \ref{forany} and the identity $\Delta (b-t)^{\overline{\alpha-1}}=-(\alpha-1)(b-t)^{\overline{\alpha-2}}$, if we follow inductively we arrive at the following generalization.

 \begin{thm} \label{RNg}(see \cite{THFer} and \cite{Thsh})
 For any real number $\alpha$ and any positive integer $p$, the
following equality holds:
\begin{equation} \label{RNg1}
~_{b}\nabla^{-\alpha}~_{\circleddash}\Delta^p f(t)=~_{\circleddash}\Delta^p
~_{b}\nabla^{-\alpha}f(t)-\sum_{k=0}^{p-1}\frac{(b-t)^{\overline{\alpha-p+k}}}
{\Gamma(\alpha+k-p+1)}~_{\ominus}\Delta^k f(b)
\end{equation}
where $f$ is defined on $_{b}N$ and some points after $b$.
 \end{thm}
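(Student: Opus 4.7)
The plan is to prove \eqref{RNg1} by induction on the integer $p$, mirroring exactly how Theorem \ref{LNg} was built up in the left/nabla case. The base case $p=1$ is precisely Lemma \ref{RN}, and by Remark \ref{forany} that lemma in fact holds for every real $\alpha$; this is the hook that lets me apply it at each step of the induction even when the effective order would otherwise be non-positive or non-integer.

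For the inductive step, suppose \eqref{RNg1} is known at level $p$. Writing ${_{\circleddash}\Delta^{p+1}} f = {_{\circleddash}\Delta}\bigl({_{\circleddash}\Delta^{p}} f\bigr)$ and applying Lemma \ref{RN} to $g = {_{\circleddash}\Delta^{p}} f$ yields
\begin{equation*}
~_b\nabla^{-\alpha}\,{_{\circleddash}\Delta^{p+1} f(t)} = {_{\circleddash}\Delta}\,~_b\nabla^{-\alpha}\,{_{\circleddash}\Delta^{p} f(t)} - \frac{(b-t)^{\overline{\alpha-1}}}{\Gamma(\alpha)}\,{_{\ominus}\Delta^{p} f(b)}.
\end{equation*}
Into the first term on the right I substitute the inductive hypothesis and push the outer $_{\circleddash}\Delta$ through the sum. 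The one routine calculation needed is the differentiation rule for the kernel: from the identity $\Delta_t (b-t)^{\overline{\alpha-1}} = -(\alpha-1)(b-t)^{\overline{\alpha-2}}$ cited just before the theorem one reads off ${_{\circleddash}\Delta_t}\,(b-t)^{\overline{\beta}} = \beta (b-t)^{\overline{\beta-1}}$, which together with the gamma shift $\Gamma(\alpha+k-p+1) = (\alpha+k-p)\Gamma(\alpha+k-p)$ transforms the generic summand into
\begin{equation*}
\frac{(b-t)^{\overline{\alpha-p+k-1}}}{\Gamma(\alpha+k-p)}\,{_{\ominus}\Delta^k f(b)} = \frac{(b-t)^{\overline{\alpha-(p+1)+k}}}{\Gamma(\alpha+k-(p+1)+1)}\,{_{\ominus}\Delta^k f(b)}.
\end{equation*}
The isolated boundary term $\frac{(b-t)^{\overline{\alpha-1}}}{\Gamma(\alpha)}\,{_{\ominus}\Delta^{p} f(b)}$ then fits in as the $k=p$ contribution to the upgraded sum, which completes the induction.

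The main obstacle is purely bookkeeping: one must (a) carefully track the sign hidden in $_{\circleddash}\Delta = -\Delta$ so that the net action on $(b-t)^{\overline{\beta}}$ comes out sign-free as $\beta (b-t)^{\overline{\beta-1}}$, and (b) verify that the gamma-argument shift $\Gamma(\alpha+k-p+1)\mapsto\Gamma(\alpha+k-p)$ aligns with the reindexing $p\mapsto p+1$ so that the extra boundary term produced by Lemma \ref{RN} is exactly the previously missing top summand. Neither point is deep, but both must be done attentively; the rest is automatic from the inductive hypothesis and Remark \ref{forany}.
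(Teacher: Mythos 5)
Your induction is correct and is precisely the argument the paper sketches: base case Lemma \ref{RN} (extended to all real $\alpha$ via Remark \ref{forany}), inductive step via the kernel identity $\Delta_t(b-t)^{\overline{\beta}}=-\beta(b-t)^{\overline{\beta-1}}$, with the new boundary term from Lemma \ref{RN} supplying the $k=p$ summand. The sign and gamma-shift bookkeeping you carry out checks out, so nothing is missing.
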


\section{Dual identities for  fractional sums and Riemann fractional  differences}

The dual relations for left fractional sums and differences were investigated in \cite{Nabla}. Indeed, the following two lemmas are dual relations between the delta left fractional sums (differences) and the nabla left fractional sums (differences).

\begin{lem} \label{left dual}\cite{Nabla}
Let $0\leq n-1< \alpha \leq n$ and let $y(t)$ be defined on $\mathbb{N}_a$. Then the following statements are valid.

(i)$ (\Delta_a^\alpha) y(t-\alpha)= \nabla_a^\alpha y(t)$ for $t \in \mathbb{N}_{n+a}$.

(ii) $ (\Delta_a^{-\alpha}) y(t+\alpha)= \nabla_a^{-\alpha} y(t)$ for $t \in \mathbb{N}_a$.
\end{lem}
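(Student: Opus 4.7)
The strategy is to prove (ii) first by a direct kernel comparison and then deduce (i) from (ii) by applying the $n$-th order difference operator.

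For (ii), the plan is to expand both sides using Definition~\ref{fractional sums}. Substituting $t+\alpha$ into the definition of $\Delta_a^{-\alpha}$ gives
$$\Delta_a^{-\alpha} y(t+\alpha) = \frac{1}{\Gamma(\alpha)}\sum_{s=a}^{t}(t+\alpha-\sigma(s))^{(\alpha-1)}y(s).$$
Using the Gamma-function forms (\ref{fpg}) and (\ref{alpharising}), both kernels $(t+\alpha-\sigma(s))^{(\alpha-1)}$ and $(t-\rho(s))^{\overline{\alpha-1}}$ collapse to the common expression $\Gamma(t-s+\alpha)/\Gamma(t-s+1)$, so the summands agree term by term. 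What remains is to align the summation endpoints with those of $\nabla_a^{-\alpha}$ and to verify no extra boundary contribution appears under the conventions inherited from \cite{Nabla}.

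For (i), I would invoke the operator definitions $\Delta_a^\alpha = \Delta^n \Delta_a^{-(n-\alpha)}$ and $\nabla_a^\alpha = \nabla^n \nabla_a^{-(n-\alpha)}$. Setting $F(u) = \Delta_a^{-(n-\alpha)} y(u)$ and $G(s) = \nabla_a^{-(n-\alpha)} y(s)$, part (ii) applied at order $n-\alpha$ yields $F(u) = G(u-(n-\alpha))$. Since $\Delta f(v) = \nabla f(v+1)$ for any $f$, this shift relation iterates (using the commutation $\Delta\nabla^j G(v)=\nabla^{j+1}G(v+1)$) to $\Delta^n F(u) = \nabla^n G(u+\alpha)$. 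Evaluating at $u = t-\alpha$ converts $\Delta_a^\alpha y(t-\alpha) = \Delta^n F(t-\alpha)$ into $\nabla^n G(t) = \nabla_a^\alpha y(t)$, which is (i); the hypothesis $t \in \mathbb{N}_{n+a}$ is exactly what is needed so that every intermediate evaluation lies in the allowed domain.

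The main obstacle I expect is not the algebra of the kernels, which reduces to a one-line Gamma-function identity, but rather keeping the domains and endpoint conventions straight. One needs the shift identity in (ii) to hold on a set wide enough that $n$ successive applications of $\Delta$ remain valid, and the endpoint $s=a$ appearing in the delta sum must be reconciled with the $s=a+1$ start of the nabla sum adopted in this paper. Once those domain and convention issues are pinned down, both dual identities collapse to the single Gamma-function comparison described above.
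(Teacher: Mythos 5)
The paper offers no proof of this lemma: it is imported verbatim from \cite{Nabla}, and the nearest in-paper model is the proof of Lemma \ref{right dual2}, which runs exactly your way (change of variable plus the kernel identity (\ref{oper2}) converting a rising factorial into a falling one). Your argument is sound on both counts: the Gamma-quotient computation $(t+\alpha-\sigma(s))^{(\alpha-1)}=(t-\rho(s))^{\overline{\alpha-1}}=\Gamma(t-s+\alpha)/\Gamma(t-s+1)$ is correct, and the derivation of (i) from (ii) via $\Delta^n F(u)=\nabla^n F(u+n)$ applied to $F(u)=G(u-n+\alpha)$ is exactly the right mechanism, with the domain $\mathbb{N}_{n+a}$ accounted for.

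The one point you flag but do not close is in fact the crux, and you should state its resolution explicitly rather than leave it as something "to be pinned down." Under this paper's own Definition \ref{fractional sums}(iii) the nabla sum starts at $s=a+1$, while your expansion of $\Delta_a^{-\alpha}y(t+\alpha)$ starts at $s=a$; the two sides then differ by the generally nonzero term $\frac{(t-a+1)^{\overline{\alpha-1}}}{\Gamma(\alpha)}\,y(a)$, so (ii) as written is \emph{false} in the paper's convention. It holds verbatim only when the nabla left sum is taken in the convention of \cite{Nabla}, i.e.\ starting from $s=a$ as in (\ref{remind}) — which is precisely why the author inserts the remark containing (\ref{remind}) immediately after Lemma \ref{left dual2}, and why Proposition \ref{lsemi} speaks of using this lemma only "after its arrangement according to our definitions." Adding one sentence identifying that boundary term and invoking (\ref{remind}) turns your sketch into a complete proof.
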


\begin{lem} \label{left dual2}\cite{Nabla}
Let $0\leq n-1< \alpha \leq n$ and let $y(t)$ be defined on $\mathbb{N}_{\alpha-n}$. Then the following statements are valid.

(i)$ \Delta_{\alpha-n}^\alpha y(t)= (\nabla_{\alpha-n}^\alpha y)(t+\alpha)$ for $t \in \mathbb{N}_{-n}$.

(ii) $ \Delta_{\alpha-n}^{-(n-\alpha)} y(t)= (\nabla_{\alpha-n}^{-(n-\alpha)} y)(t-n+\alpha)$ for $t \in \mathbb{N}_0$.
\end{lem}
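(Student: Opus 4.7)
The plan is to read both assertions as direct reparameterizations of Lemma \ref{left dual} with the specific choice of lower endpoint $a=\alpha-n$, followed by a shift of the dummy variable in which each identity is written. Since Lemma \ref{left dual} already packages the relationship between the delta and nabla left fractional sums (and differences) for an arbitrary starting point $a$, nothing fundamentally new must be proved here; the work is purely to verify that the variable change is legitimate on the relevant integer grids.

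For part (ii), I would start from Lemma \ref{left dual}(ii), but applied with the order $n-\alpha$ in place of $\alpha$ (this is allowed since $n-\alpha\in[0,1)$ and the lemma is stated for any positive order). Setting $a=\alpha-n$ yields
\begin{equation*}
\bigl(\Delta_{\alpha-n}^{-(n-\alpha)}y\bigr)(t+(n-\alpha))=\nabla_{\alpha-n}^{-(n-\alpha)}y(t)\qquad\text{for }t\in\mathbb{N}_{\alpha-n}.
\end{equation*}
Introducing the new variable $s=t+(n-\alpha)$, which is a bijection $\mathbb{N}_{\alpha-n}\to\mathbb{N}_{0}$, rewrites the right-hand side as $\nabla_{\alpha-n}^{-(n-\alpha)}y(s-n+\alpha)$ and gives exactly the claim, now indexed by $s\in\mathbb{N}_{0}$.

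For part (i), I would proceed symmetrically from Lemma \ref{left dual}(i) with $a=\alpha-n$:
\begin{equation*}
(\Delta_{\alpha-n}^{\alpha}y)(t-\alpha)=\nabla_{\alpha-n}^{\alpha}y(t)\qquad\text{for }t\in\mathbb{N}_{n+a}=\mathbb{N}_{\alpha},
\end{equation*}
and then set $s=t-\alpha$, which ranges over $\mathbb{N}_{0}$, to obtain $\Delta_{\alpha-n}^{\alpha}y(s)=(\nabla_{\alpha-n}^{\alpha}y)(s+\alpha)$. The stated index set $\mathbb{N}_{-n}$ in the paper is a loose way of recording that the right-hand side $(\nabla_{\alpha-n}^{\alpha}y)(s+\alpha)$ is in fact meaningful as soon as $s+\alpha\in\mathbb{N}_{\alpha-n+1}$, but the natural and tightest common domain forced by the left-hand side is $s\in\mathbb{N}_{0}$, which is where the identity should be read.

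The only conceptual obstacle is bookkeeping: one must check that when we push the substitution through, the sums defining $\Delta_{\alpha-n}^{\alpha}$ and $\nabla_{\alpha-n}^{\alpha}$ (resp.\ the corresponding fractional sums) remain over the correct index sets, i.e.\ that the endpoints land on integer values after the shift by $\alpha$ (or by $n-\alpha$). This is precisely why the choice $a=\alpha-n$ is made: the shift $\alpha$ between $t$ in the delta formula and $t$ in the nabla formula lands exactly on integers because $a+(n-\alpha)=0$ is an integer, so $\Delta_{\alpha-n}^{\alpha}y$ lives on $\mathbb{N}_{0}$ and $\nabla_{\alpha-n}^{\alpha}y$ lives on $\mathbb{N}_{\alpha-n+1}$, whose shift by $\alpha$ is $\mathbb{N}_{1-n}\supseteq\mathbb{N}_{0}$. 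Once this compatibility is noted, no genuine calculation beyond the change of variable remains.
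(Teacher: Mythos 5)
First, note that the paper itself offers no proof of this lemma: it is quoted from \cite{Nabla}, and the only in-paper model for how such a statement gets established is the proof of the right-sided analogue, Lemma \ref{right dual2}, which proceeds by writing out the defining sum, changing the summation variable, and converting rising to falling factorials via (\ref{oper2}). Your route is genuinely different: you obtain the statement as the specialization $a=\alpha-n$ of the general dual identity, Lemma \ref{left dual}, followed by a relabelling of the argument. For part (ii) this is clean and complete: Lemma \ref{left dual}(ii) applied with order $n-\alpha$ and $a=\alpha-n$ gives the identity on $\mathbb{N}_{\alpha-n}$, and the shift $s=t+(n-\alpha)$ carries it to $\mathbb{N}_0$ exactly as stated (the degenerate case $\alpha=n$ being trivial, since an order-zero sum is the identity). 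One caveat you should make explicit: both Lemma \ref{left dual} and Lemma \ref{left dual2} are stated in the convention (\ref{remind}) of \cite{Nabla}, where the nabla sum starts at $s=a$ rather than at $s=a+1$ as in Definition \ref{fractional sums}(iii); your reduction is internally consistent precisely because you never unpack the definitions, but it would silently fail if one mixed the two conventions.

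The one genuine loose end is the domain in part (i). Lemma \ref{left dual}(i) is asserted only for $t\in\mathbb{N}_{n+a}=\mathbb{N}_{\alpha}$, so your substitution $s=t-\alpha$ delivers the identity only on $\mathbb{N}_{0}$, whereas the statement claims it on the strictly larger set $\mathbb{N}_{-n}$. You notice this and declare $\mathbb{N}_{-n}$ to be ``loose,'' but that is a reinterpretation of the claim rather than a proof of it: with the empty-sum convention both $\Delta_{\alpha-n}^{\alpha}y$ and $(\nabla_{\alpha-n}^{\alpha}y)(\cdot+\alpha)$ do extend to the points $-n,\dots,-1$, and verifying that they agree there requires a short direct computation (or an appeal to the source \cite{Nabla}) that your argument does not supply. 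This is a bookkeeping gap rather than a conceptual one, but as written the proposal establishes a slightly weaker statement than the one quoted.
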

 We remind that the above two dual lemmas for left fractional sums and differences were obtained when the nabla left fractional sum was defined by

 \begin{equation}\label{remind}
  \nabla_a^{-\alpha} f(t)=\frac{1}{\Gamma(\alpha)} \sum_{s=a}^t(t-\rho(s))^{\overline{\alpha-1}}f(s),~~t \in \mathbb{N}_{a}
\end{equation}

Now, in analogous to Lemma \ref{left dual} and Lemma \ref{left dual2}, for the right fractional summations and differences the author in \cite{Thsh} obtained:

\begin{lem} \label{right dual}
Let $y(t)$ be defined on $~_{b+1}\mathbb{N}$. Then the following statements are valid.

(i)$ (~_{b}\Delta^\alpha) y(t+\alpha)= ~_{b+1}\nabla^\alpha y(t)$ for $t \in ~_{b-n}\mathbb{N}$.

(ii) $ (~_{b}\Delta^{-\alpha}) y(t-\alpha)= ~_{b+1}\nabla^{-\alpha} y(t)$ for $t \in ~_{b}\mathbb{N}$.
\end{lem}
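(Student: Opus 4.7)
The plan is to first establish part (ii) by direct computation on the sums, and then derive part (i) from (ii) by combining the definitions of the right fractional differences with the observation that $\Delta^n w(t)=(\nabla^n w)(t+n)$.

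For (ii), I would start from the definition and substitute:
\[
(~_{b}\Delta^{-\alpha}y)(t-\alpha)=\frac{1}{\Gamma(\alpha)}\sum_{s=t}^{b}(s-\sigma(t-\alpha))^{(\alpha-1)}y(s)=\frac{1}{\Gamma(\alpha)}\sum_{s=t}^{b}(s-t+\alpha-1)^{(\alpha-1)}y(s).
\]
Using $(s-t+\alpha-1)^{(\alpha-1)}=\Gamma(s-t+\alpha)/\Gamma(s-t+1)=(s-t+1)^{\overline{\alpha-1}}=(s-\rho(t))^{\overline{\alpha-1}}$, the summand matches term-by-term with the definition of $~_{b+1}\nabla^{-\alpha}y(t)$, and the summation limits $s=t,\ldots,b$ also agree, giving (ii). This step is just bookkeeping with the two forms of the factorial/rising function.

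For (i), let $u(s)=(~_{b}\Delta^{-(n-\alpha)}y)(s)$ and $w(t)=(~_{b+1}\nabla^{-(n-\alpha)}y)(t)$. By part (ii) with $\alpha$ replaced by $n-\alpha$, one has $u(\tau)=w(\tau+n-\alpha)$, hence $u(t+\alpha)=w(t+n)$. Since $\nabla_{\ominus}^n=(-1)^n\nabla^n$ commutes with the shift $\tau\mapsto\tau+\alpha$ (because $\nabla[v(\cdot+\alpha)](t)=(\nabla v)(t+\alpha)$), I can write
\[
(~_{b}\Delta^{\alpha}y)(t+\alpha)=(-1)^n(\nabla^n u)(t+\alpha)=(-1)^n(\nabla^n w)(t+n).
\]
Now I would invoke the elementary identity $\Delta^n w(t)=(\nabla^n w)(t+n)$, which follows from $\Delta w(t)=w(t+1)-w(t)=(\nabla w)(t+1)$ by an easy induction. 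Therefore
\[
(~_{b}\Delta^{\alpha}y)(t+\alpha)=(-1)^n\Delta^n w(t)=~_{\ominus}\Delta^n(~_{b+1}\nabla^{-(n-\alpha)}y)(t)=~_{b+1}\nabla^{\alpha}y(t),
\]
which is exactly (i).

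The only nontrivial step is bookkeeping: tracking the shift by $\alpha$ versus the integer shift by $n$ and matching it to the relation between $\nabla^n$ and $\Delta^n$. The range restriction $t\in{}_{b-n}\mathbb{N}$ in (i) is precisely what makes every quantity appearing above lie in the domain of the relevant operator, so after checking that, the identification above completes the argument.
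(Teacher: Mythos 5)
Your proof is correct. The paper itself states this lemma as an imported result from its reference [Thsh] and gives no proof here, but it does prove the companion Lemma~3.4 (the \texttt{right dual2} lemma) by exactly the computation you use for part (ii): shift the argument, change the summation index, and convert between falling and rising factorials via the identity $t^{\overline{\alpha}}=(t+\alpha-1)^{(\alpha)}$ (equation (\ref{oper2})). Where you differ is in part (i): the paper's style (as seen in its proof of the companion lemma) is to carry the outer $n$-th order difference operators through the same change-of-variable computation all at once, relabelling $\Delta^n$ as a $\nabla^n$ at the shifted point inside one display, which is terse and somewhat error-prone in the bookkeeping. You instead factor (i) cleanly through (ii): apply (ii) with $\alpha$ replaced by $n-\alpha$ to identify the inner sums up to a shift, then use the elementary commutation $\Delta^n w(t)=(\nabla^n w)(t+n)$ together with the fact that $\nabla$ commutes with a constant shift. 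This modular route makes the integer-versus-fractional shift accounting transparent and isolates the only genuinely order-dependent step in a one-line induction; the cost is that you must verify the domain statement $t\in{}_{b-n}\mathbb{N}$ separately, which you do. Both arguments are sound; yours is the cleaner exposition of the same underlying identity.
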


\begin{lem} \label{right dual2} \cite{Thsh}
Let $0\leq n-1< \alpha \leq n$ and let $y(t)$ be defined on $~_{n-\alpha}\mathbb{N}$. Then the following statements are valid.

(i) $$~_{n-\alpha}\Delta^\alpha y(t)=~_{n-\alpha+1}\nabla^\alpha y(t-\alpha),~~~t \in ~_{n}\mathbb{N}$$

(ii)$$~_{n-\alpha}\Delta^{-(n-\alpha)} y(t)=~_{n-\alpha+1}\nabla^{-(n-\alpha)} y(t+n-\alpha),~~~t \in ~_{0}\mathbb{N}$$
\end{lem}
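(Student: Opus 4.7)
The plan is to derive both identities as immediate specializations of Lemma~\ref{right dual}, in exact analogy with the way Lemma~\ref{left dual2} is read off from Lemma~\ref{left dual} on the left side. The key observation is that on taking the base point $b = n - \alpha$ in Lemma~\ref{right dual}, its two parts, after a single shift in the argument of $y$, deliver exactly parts (i) and (ii) of the present statement.

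For part (ii), I would apply Lemma~\ref{right dual}(ii) with $b = n - \alpha$ and with the order $n - \alpha$ (in place of $\alpha$). That yields
\[
~_{n-\alpha}\Delta^{-(n-\alpha)} y\bigl(s - (n - \alpha)\bigr) = ~_{n-\alpha+1}\nabla^{-(n-\alpha)} y(s)
\]
for $s \in ~_{n-\alpha}\mathbb{N}$. Setting $t = s - (n-\alpha)$ converts this into the identity in (ii) and turns the index range into $t \in ~_0\mathbb{N}$. For part (i), I would apply Lemma~\ref{right dual}(i) with the same choice $b = n - \alpha$, obtaining
\[
~_{n-\alpha}\Delta^\alpha y(s + \alpha) = ~_{n-\alpha+1}\nabla^\alpha y(s),
\]
and then substitute $t = s + \alpha$ to produce the formula in (i).

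The only step that really demands attention is the index bookkeeping. Because $b = n - \alpha$ is in general not an integer, I need to confirm that the hypothesis ``$y$ defined on $~_{n-\alpha}\mathbb{N}$'' is enough to invoke Lemma~\ref{right dual}, which nominally wants $y$ on $~_{(n-\alpha)+1}\mathbb{N}$; this inclusion is automatic. I also need to reconcile the stated index set $t \in ~_n\mathbb{N}$ in part (i) with the $t \in ~_0\mathbb{N}$ that the substitution naturally produces, which I expect to reduce to the empty-sum convention in force throughout Section~2 (so that both sides agree, term-by-term, on the extended range). I anticipate this domain matching, rather than any analytical difficulty, to be the main --- indeed essentially the only --- obstacle in the proof.
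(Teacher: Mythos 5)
Your argument is correct, but it takes a genuinely different route from the paper's. The paper proves part (i) by direct computation: it expands $~_{n-\alpha+1}\nabla^\alpha y(t-\alpha)$ from Definition \ref{fractional differences}(iv), reindexes the sum, and invokes the identity (\ref{oper2}), $t^{\overline{\alpha}}=(t+\alpha-1)^{(\alpha)}$, to turn the rising-factorial kernel into the falling-factorial kernel of $~_{n-\alpha}\Delta^\alpha y(t)$; part (ii) is declared similar. You instead read both parts off Lemma \ref{right dual} at the base point $b=n-\alpha$ followed by a shift of the argument, exactly mirroring how Lemma \ref{left dual2} sits relative to Lemma \ref{left dual}. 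This is logically sound within the paper, since Lemma \ref{right dual} is quoted from \cite{Thsh} as established and does not depend on Lemma \ref{right dual2}; your route buys brevity, while the paper's buys self-containedness (its computation is in effect the proof of Lemma \ref{right dual} carried out at this particular base point). Two bookkeeping remarks. First, the inclusion you call ``automatic'' runs the wrong way: $~_{n-\alpha}\mathbb{N}\subset ~_{n-\alpha+1}\mathbb{N}$, so a function defined on the former is not formally defined on the latter; the correct justification is that the sums defining the right fractional operators ending at $b=n-\alpha$ never evaluate $y$ above $n-\alpha$, so the value at $n-\alpha+1$ is never used. Second, for part (i) your substitution delivers the identity on $~_{0}\mathbb{N}$ while the lemma asserts it on the larger set $~_{n}\mathbb{N}$; the extra points $t=1,\dots,n$ do need a separate (easy) check via the empty-sum convention, and you should either perform it or note that the paper's own computation is equally silent on this range.
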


\begin{proof} We prove (i), the proof of (ii) is similar. By the definition of right nabla difference  we have

$$~_{n-\alpha+1}\nabla^\alpha y(t-\alpha)=~_{a}\Delta^n \frac{1}{\Gamma(n-\alpha)}\sum_{s=t-\alpha}^{n-\alpha}(s-\rho(t-\alpha))^{\overline{n-\alpha-1}}y(s)=$$

\begin{equation}\label{rig1}
  ~_{a}\Delta^n \frac{1}{\Gamma(n-\alpha)}\sum_{s=t-\alpha}^{n-\alpha}(s-\rho(t-\alpha))^{\overline{n-\alpha-1}}y(s)= \nabla_b^n \frac{1}{\Gamma(n-\alpha)}\sum_{s=t+n-\alpha}^{n-\alpha}(s-\rho(t+n-\alpha))^{\overline{n-\alpha-1}}y(s)
\end{equation}
By using (\ref{oper2}) it follows that

\begin{equation}\label{rig2}
~_{n-\alpha+1}\nabla^\alpha y(t-\alpha)=\nabla_b^n \frac{1}{\Gamma(n-\alpha)}\sum_{s=t+n-\alpha}^{n-\alpha}(s-\sigma(t))^{(n-\alpha-1)}y(s)=~_{n-\alpha}\Delta^\alpha y(t)
\end{equation}
\end{proof}
Note that the above two dual lemmas for right fractional  differences can not be obtained if we apply the definition of the delta right fractional difference introduced in \cite{Nuno} and \cite{Atmodel}.

\begin{lem}\label{power} \cite{TDbyparts}
Let $\alpha>0,~\mu>0$. Then,
\begin{equation} \label{power1}
~_{b-\mu}\Delta^{-\alpha}(b-t)^{(\mu)}=\frac{\Gamma(\mu+1)}
{\Gamma(\mu+\alpha+1)}(b-t)^{(\mu+\alpha)}
\end{equation}
\end{lem}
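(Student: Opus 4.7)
The plan is a direct computation: unfold the definition of the right delta fractional sum, reindex the summation, and recognise the resulting finite sum as a terminating Chu--Vandermonde convolution, then reassemble the Gamma factors into the claimed falling factorial.

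First I would write
$$~_{b-\mu}\Delta^{-\alpha}(b-t)^{(\mu)}=\frac{1}{\Gamma(\alpha)} \sum_{s=t+\alpha}^{b-\mu}(s-\sigma(t))^{(\alpha-1)}(b-s)^{(\mu)},$$
and substitute $s=t+\alpha+k$ where $k$ runs over $0,1,\ldots,N$ with $N:=b-t-\alpha-\mu$ (a nonnegative integer, by the domain restriction $t\in{}_{b-\mu-\alpha}\mathbb{N}$). Using $(s-\sigma(t))^{(\alpha-1)}=(\alpha+k-1)^{(\alpha-1)}=\Gamma(\alpha+k)/\Gamma(k+1)$ and $(b-s)^{(\mu)}=(N+\mu-k)^{(\mu)}=\Gamma(N+\mu-k+1)/\Gamma(N-k+1)$, the sum rewrites as
$$~_{b-\mu}\Delta^{-\alpha}(b-t)^{(\mu)}=\frac{1}{\Gamma(\alpha)}\sum_{k=0}^{N}\frac{\Gamma(\alpha+k)}{\Gamma(k+1)}\cdot\frac{\Gamma(N+\mu-k+1)}{\Gamma(N-k+1)}.$$

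Next I would rewrite the summand in rising-factorial form via $\alpha^{\overline{k}}=\Gamma(\alpha+k)/\Gamma(\alpha)$ and $(\mu+1)^{\overline{N-k}}=\Gamma(N+\mu-k+1)/\Gamma(\mu+1)$, pull out the constant factor $\Gamma(\alpha)\Gamma(\mu+1)$, and apply the (generalised) Chu--Vandermonde convolution
$$\sum_{k=0}^{N}\frac{\alpha^{\overline{k}}}{k!}\cdot\frac{(\mu+1)^{\overline{N-k}}}{(N-k)!}=\frac{(\alpha+\mu+1)^{\overline{N}}}{N!}=\frac{\Gamma(\alpha+\mu+N+1)}{\Gamma(\alpha+\mu+1)\,N!}.$$
Reassembling and observing $\alpha+\mu+N+1=b-t+1$ and $N!=\Gamma(b-t-\alpha-\mu+1)$ yields
$$~_{b-\mu}\Delta^{-\alpha}(b-t)^{(\mu)}=\frac{\Gamma(\mu+1)}{\Gamma(\mu+\alpha+1)}\cdot\frac{\Gamma(b-t+1)}{\Gamma(b-t-\mu-\alpha+1)}=\frac{\Gamma(\mu+1)}{\Gamma(\mu+\alpha+1)}(b-t)^{(\mu+\alpha)},$$
which is the claimed identity.

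The main obstacle is the Chu--Vandermonde step: for non-integer $\alpha,\mu>0$ one cannot rely on a binomial-coefficient argument but must invoke the generalised (hypergeometric) form of the convolution, equivalent to a terminating case of Gauss's ${}_2F_1(a,b;c;1)$ summation. A shorter self-contained route is to observe that the reindexed sum is \emph{formally identical} to the sum defining $\Delta_0^{-\alpha}s^{(\mu)}$ evaluated at $s=b-t$ (after a second reflection $k\mapsto N-k$), so the identity reduces at once to the left-sided power rule $\Delta_a^{-\alpha}(t-a)^{(\mu)}=\tfrac{\Gamma(\mu+1)}{\Gamma(\mu+\alpha+1)}(t-a)^{(\mu+\alpha)}$ of \cite{Ferd}; the only bookkeeping issue is that the left-sided sum has extra terms at indices where $s^{(\mu)}$ vanishes under the ``division at a pole'' convention, which one must verify do not contribute.
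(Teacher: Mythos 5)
Your computation is correct: the reindexing $s=t+\alpha+k$, the Gamma-function rewriting of both falling factorials, and the terminating Vandermonde convolution $\sum_{k=0}^{N}\frac{\alpha^{\overline{k}}}{k!}\frac{(\mu+1)^{\overline{N-k}}}{(N-k)!}=\frac{(\alpha+\mu+1)^{\overline{N}}}{N!}$ (valid for arbitrary real parameters and integer $N\geq 0$, e.g.\ by comparing coefficients in $(1-x)^{-\alpha}(1-x)^{-(\mu+1)}=(1-x)^{-(\alpha+\mu+1)}$) all check out, and the final reassembly $\alpha+\mu+N+1=b-t+1$ gives exactly the claimed identity. Note, however, that this paper contains no proof of the lemma at all: it is imported verbatim from the cited reference \cite{TDbyparts}, where such right-sided power rules are obtained not by direct summation but by transferring the known left-sided rule $\Delta_a^{-\alpha}(t-a)^{(\mu)}=\frac{\Gamma(\mu+1)}{\Gamma(\mu+\alpha+1)}(t-a)^{(\mu+\alpha)}$ of \cite{Ferd} through the $Q$-operator reflection $(Qf)(s)=f(a+b-s)$ (the identity $\Delta_a^{-\alpha}Qf=Q\,{}_{b}\Delta^{-\alpha}f$ stated in Section 7), the left rule itself being proved there by a transform method. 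So your argument is a genuinely different, self-contained route: it trades the reflection bookkeeping (which you correctly identify as the delicate point of your alternative sketch --- the shifted endpoint $b-\mu$ and the pole convention must be tracked) for a single classical hypergeometric summation, and it has the advantage of making the domain restriction $t\in{}_{b-\mu-\alpha}\mathbb{N}$ and the nonnegativity of $N$ completely explicit. Either approach is acceptable; yours is arguably the cleaner one to verify line by line.
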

The following commutative property for delta right fractional sums is Theorem 9 in \cite{TDbyparts}.

\begin{thm}\label{bcomp}
Let $\alpha>0,~\mu>0$. Then, for all $t$ such that $t\equiv
b-(\mu+\alpha)~(mod~1)$, we have
\begin{equation}\label{bcomp1}
~_{b}\Delta^{-\alpha}[~_{b}\Delta^{-\mu}
f(t)]=~_{b}\Delta^{-(\mu+\alpha)}f(t)=~_{b}\Delta^{-\mu}[~_{b}\Delta^{-\alpha}
f(t)]
\end{equation}
where $f$ is defined on $_{b}N$.
\end{thm}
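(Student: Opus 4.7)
By symmetry in $\alpha$ and $\mu$ it suffices to establish the first equality $~_b\Delta^{-\alpha}[~_b\Delta^{-\mu}f(t)]=~_b\Delta^{-(\mu+\alpha)}f(t)$, since the second follows by interchanging the roles of $\alpha$ and $\mu$ in the argument.

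The plan is to unfold the iterated fractional sum on the left via Definition~\ref{fractional sums}(ii) and then reverse the order of the resulting double summation over $s$ and $r$. The congruence $t\equiv b-(\mu+\alpha)\pmod 1$ is precisely what is needed for all summation indices and factorial arguments to be integers, making this reindexing unambiguous: for fixed $r$ the constraints $s\geq t+\alpha$ and $s+\mu\leq r$ squeeze $s$ into the window $t+\alpha\leq s\leq r-\mu$, while $r$ runs from $t+\alpha+\mu$ to $b$. After swapping, one is left with a single outer sum over $r$ weighted by the inner kernel
$$K(t,r):=\sum_{s=t+\alpha}^{r-\mu}(s-\sigma(t))^{(\alpha-1)}(r-\sigma(s))^{(\mu-1)}.$$
If one can show $K(t,r)=\frac{\Gamma(\alpha)\Gamma(\mu)}{\Gamma(\alpha+\mu)}(r-\sigma(t))^{(\alpha+\mu-1)}$, the outer sum collapses exactly into $~_b\Delta^{-(\mu+\alpha)}f(t)$ and the theorem follows.

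To evaluate $K(t,r)$, I would rewrite $(r-\sigma(s))^{(\mu-1)}=((r-1)-s)^{(\mu-1)}$ and read off $K(t,r)/\Gamma(\alpha)$ as the value at $t$ of $~_{r-\mu}\Delta^{-\alpha}$ applied to the power function $((r-1)-\cdot)^{(\mu-1)}$; Lemma~\ref{power}, with the parameters $b$ and $\mu$ there replaced by $r-1$ and $\mu-1$, then yields the desired closed form. The main obstacle is the degenerate range $0<\mu\leq 1$, where $\mu-1\leq 0$ and Lemma~\ref{power} does not apply in its stated form; in that case I would bypass the power rule and instead evaluate $K(t,r)$ directly by converting the factorial functions to Gamma quotients and invoking the Chu--Vandermonde convolution $\sum_{u=0}^{N}\binom{u+\alpha-1}{u}\binom{N-u+\mu-1}{N-u}=\binom{N+\alpha+\mu-1}{N}$ with $N=r-t-\alpha-\mu$, which produces the same closed form uniformly in $\mu>0$. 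The remaining index bookkeeping is mechanical.
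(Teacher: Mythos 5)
Your argument is correct, and it is essentially the route the paper relies on: Theorem~\ref{bcomp} is only cited here (as Theorem~9 of \cite{TDbyparts}), but the paper proves the analogous left/nabla statement (Proposition~\ref{lsemi}) by exactly your scheme of writing out the iterated sum, interchanging the order of summation, and collapsing the inner kernel via the power rule, which is the Chu--Vandermonde convolution in disguise. Your extra care about the range $0<\mu\leq 1$, where Lemma~\ref{power} as stated does not apply to the exponent $\mu-1$, and your fallback to the Gamma-quotient/Vandermonde computation, are sound and make the evaluation of $K(t,r)$ valid uniformly in $\mu>0$.
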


\begin{prop}\label{semi} \cite{Thsh}
Let $f$ be a real valued function defined on $~_{b}\mathbb{N}$, and let $\alpha, \beta >0$. Then

\begin{equation}\label{semi1}
    ~_{b}\nabla^{-\alpha}[~_{b}\nabla^{-\beta}f(t)]= ~_{b}\nabla^{-(\alpha+\beta)}f(t)=  ~_{b}\nabla^{-\beta}[~_{b}\nabla^{-\alpha}f(t)]
\end{equation}
\end{prop}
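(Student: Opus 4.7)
The plan is to prove the proposition directly by expanding the iterated sum and interchanging the order of summation. In contrast with the delta-case analogue (Theorem \ref{bcomp}), no congruence condition on $t$ is needed here because the nabla right fractional sum is indexed on integer points of $~_{b-1}\mathbb{N}$.

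First I would write out, by Definition \ref{fractional sums}(iv),
\begin{equation*}
~_{b}\nabla^{-\alpha}[~_{b}\nabla^{-\beta}f(t)]=\frac{1}{\Gamma(\alpha)\Gamma(\beta)}\sum_{s=t}^{b-1}\sum_{r=s}^{b-1}(s-\rho(t))^{\overline{\alpha-1}}(r-\rho(s))^{\overline{\beta-1}}f(r),
\end{equation*}
and then interchange the two finite sums, which is trivially justified, to obtain
\begin{equation*}
~_{b}\nabla^{-\alpha}[~_{b}\nabla^{-\beta}f(t)]=\frac{1}{\Gamma(\alpha)\Gamma(\beta)}\sum_{r=t}^{b-1}f(r)\sum_{s=t}^{r}(s-\rho(t))^{\overline{\alpha-1}}(r-\rho(s))^{\overline{\beta-1}}.
\end{equation*}

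The heart of the proof is then the kernel identity
\begin{equation*}
\sum_{s=t}^{r}(s-\rho(t))^{\overline{\alpha-1}}(r-\rho(s))^{\overline{\beta-1}}=\frac{\Gamma(\alpha)\Gamma(\beta)}{\Gamma(\alpha+\beta)}(r-\rho(t))^{\overline{\alpha+\beta-1}}.
\end{equation*}
Substituting $k=s-t$ and $m=r-t$ and using $(j+1)^{\overline{\gamma-1}}=\Gamma(j+\gamma)/\Gamma(j+1)$, this reduces to the classical Chu--Vandermonde summation
\begin{equation*}
\sum_{k=0}^{m}\binom{k+\alpha-1}{k}\binom{m-k+\beta-1}{m-k}=\binom{m+\alpha+\beta-1}{m},
\end{equation*}
which I would either prove by induction on $m$ using the Pascal-type recurrence for rising factorials, or more quickly by equating coefficients of $x^m$ on both sides of the generating-function identity $(1-x)^{-\alpha}(1-x)^{-\beta}=(1-x)^{-(\alpha+\beta)}$.

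Once the kernel identity is in hand, plugging it back into the double sum gives exactly $~_{b}\nabla^{-(\alpha+\beta)}f(t)$. The second equality in the proposition follows from the symmetry of both the kernel identity and the summation limits in $(\alpha,\beta)$, so repeating the same computation with the roles of $\alpha$ and $\beta$ exchanged yields $~_{b}\nabla^{-\beta}[~_{b}\nabla^{-\alpha}f(t)]=~_{b}\nabla^{-(\alpha+\beta)}f(t)$. I expect the only real obstacle to be the Chu--Vandermonde step; the index bookkeeping and the rising-factorial-to-gamma conversions are routine.
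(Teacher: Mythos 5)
Your argument is correct, but it is genuinely different from the one in the paper. The paper does not touch the double sum at all: it invokes the dual identity of Lemma \ref{right dual}(ii) to rewrite each nabla right fractional sum as a delta right fractional sum (with a shift by the order), applies the already-established delta semigroup property of Theorem \ref{bcomp}, and then converts back. That route is very short given the machinery already built, and it serves the paper's broader purpose of showcasing the delta--nabla dual identities; its cost is that it inherits the congruence bookkeeping of Theorem \ref{bcomp} and rests on the delta-case result being available. Your route is a self-contained direct computation: interchange the two finite sums and evaluate the inner convolution kernel by Chu--Vandermonde. The index work checks out (with $k=s-t$, $m=r-t$ one has $s-\rho(t)=k+1$ and $r-\rho(s)=m-k+1$, and $(j+1)^{\overline{\gamma-1}}=\Gamma(\gamma)\binom{j+\gamma-1}{j}$, so the kernel identity is exactly the Vandermonde convolution), and the symmetry remark disposes of the second equality. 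It is worth noting that your kernel identity is the power rule of Proposition \ref{nabla right power} in disguise: the inner sum equals $\Gamma(\alpha)\,{}_{r+1}\nabla^{-\alpha}$ applied to $((r+1)-t)^{\overline{\beta-1}}$, which is precisely how the paper proves the left-hand analogue (Proposition \ref{lsemi}, following Theorem 2.1 of \cite{Nabla}). So your proof buys independence from the delta theory and makes transparent why no congruence condition on $t$ is needed in the nabla case, at the price of proving (or citing) Chu--Vandermonde explicitly rather than reusing results already in the paper.
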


\begin{proof}
The proof follows by applying Lemma \ref{right dual}(ii) and Theorem \ref{bcomp} above. Indeed,

$$ ~_{b}\nabla^{-\alpha}[~_{b}\nabla^{-\beta}f(t)]=  ~_{b}\nabla^{-\alpha} ~_{b-1}\Delta ^{-\beta}f(t-\beta)= $$

\begin{equation}\label{semi2}
  ~_{b-1}\Delta^{-\alpha} ~_{b-1}\Delta^{-\beta} f(t-(\alpha+\beta))=   ~_{b-1}\Delta^{-(\alpha+\beta)} f(t-(\alpha+\mu)) =~_{b}\nabla^{-(\alpha+\beta)}y(t)
\end{equation}
\end{proof}

The following power rule for nabla right fractional differences plays an important rule.

\begin{prop} \label{nabla right power} (\cite{THFer}, \cite{Thsh})
Let $\alpha >0,~ \mu >-1$. Then, for $t \in ~_{b}\mathbb{N}$ , we have
\begin{equation}\label{pnr1}
~ _{b}\nabla ^{-\alpha} (b-t)^{\overline{\mu}}=\frac{\Gamma(\mu+1) } {\Gamma(\alpha+\mu+1)} (b-t)^{\overline{\alpha+\mu}}
\end{equation}

\end{prop}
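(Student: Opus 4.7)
The plan is to reduce the claim to the analogous delta statement, Lemma \ref{power}, via the right-sided dual identity (Lemma \ref{right dual}(ii)), using the conversion $t^{\overline{\alpha}}=(t+\alpha-1)^{(\alpha)}$ from (\ref{oper2}) to move between rising and falling factorials. The hard part of the exercise is just keeping track of all the shifts in $b$ and $t$, since the dual identity moves $b\to b-1$ and $t\to t-\alpha$, while (\ref{oper2}) moves the base of the falling factorial by $\mu-1$.

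First, I would rewrite Lemma \ref{right dual}(ii) in the form I need. Replacing $b$ by $b-1$ there gives $~_{b}\nabla^{-\alpha}y(t)=~_{b-1}\Delta^{-\alpha}y(t-\alpha)$. Applying this with $y(s)=(b-s)^{\overline{\mu}}$,
\begin{equation*}
~_{b}\nabla^{-\alpha}(b-t)^{\overline{\mu}}
=\bigl[\,_{b-1}\Delta^{-\alpha}(b-\cdot)^{\overline{\mu}}\bigr](t-\alpha).
\end{equation*}

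Next I would convert the rising factorial inside to a falling factorial using (\ref{oper2}): $(b-s)^{\overline{\mu}}=(b+\mu-1-s)^{(\mu)}$. Setting $\tilde b:=b+\mu-1$, this is $(\tilde b-s)^{(\mu)}$, and notice that $\tilde b-\mu = b-1$, which is precisely the upper endpoint appearing in the delta right fractional sum $~_{b-1}\Delta^{-\alpha}$. So Lemma \ref{power}, applied with $\tilde b$ in place of $b$, is directly available and yields
\begin{equation*}
~_{b-1}\Delta^{-\alpha}(\tilde b-t)^{(\mu)}
=\frac{\Gamma(\mu+1)}{\Gamma(\mu+\alpha+1)}(\tilde b-t)^{(\mu+\alpha)}
=\frac{\Gamma(\mu+1)}{\Gamma(\mu+\alpha+1)}(b+\mu-1-t)^{(\mu+\alpha)}.
\end{equation*}

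Finally, I would evaluate this at $t-\alpha$ as required by the first step, which replaces $b+\mu-1-t$ by $b+\mu+\alpha-1-t=(b-t)+(\mu+\alpha)-1$, and then invoke (\ref{oper2}) in the reverse direction to recognize $(b-t+(\mu+\alpha)-1)^{(\mu+\alpha)}=(b-t)^{\overline{\mu+\alpha}}$. Combining these steps gives the desired identity. The only subtle point in the write-up is the matching $\tilde b-\mu=b-1$, which is what makes Lemma \ref{power} applicable with exactly the endpoint produced by the dual identity; without this observation one might be tempted to try to prove it directly from the defining sum, which would require a Chu–Vandermonde–type identity.
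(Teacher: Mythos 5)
Your proof is correct and follows essentially the same route as the paper: reduce to the delta case via Lemma \ref{right dual}(ii), convert between rising and falling factorials with (\ref{oper2}), and invoke the delta power rule Lemma \ref{power}. The only difference is bookkeeping — you relabel the endpoint as $\tilde b = b+\mu-1$ so that $\tilde b - \mu = b-1$ matches Lemma \ref{power} directly, whereas the paper achieves the same alignment by a change of summation variable $r=s-\mu+1$ inside the explicit sum; both are equivalent.
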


\begin{proof}
By the dual formula (ii) of Lemma \ref{right dual}, we have
$$~ _{b}\nabla ^{-\alpha} (b-t)^{\overline{\mu}}= ~_{b-1}\Delta^{-\alpha} (b-r)^{\overline{\mu}}|_{r=t-\alpha}=$$
\begin{equation}\label{pnr2}
\frac{1}{\Gamma(\alpha)} \sum_{s=t}^{b-1}(s-t+\alpha-1)^{(\alpha-1)}(b-s)^{\overline{\mu}}.
\end{equation}
Then by the identity $t^{\overline{\alpha}}=(t+\alpha-1)^{(\alpha-1)}$ and using the change of variable $r=s-\mu+1$, it follows that
$$~ _{b}\nabla ^{-\alpha} (b-t)^{\overline{\mu}}=$$
\begin{equation}\label{pnr3}
  \frac{1}{\Gamma(\alpha)} \sum_{r=t-\mu+1}^{b-\mu} (r-\sigma(t-\alpha-\mu+1))^{(\alpha-1)} (b-r)^{\overline{\mu}}=(~_{b-\mu}\Delta^{-\alpha} (b-u)^{\overline{\mu}})|_{u=-\alpha-\mu+1+t}.
\end{equation}
Which  by Lemma \ref{power} leads to

$$~ _{b}\nabla ^{-\alpha} (b-t)^{\overline{\mu}}=$$
\begin{equation}\label{pnr4}
\frac{\Gamma(\mu+1) } {\Gamma(\alpha+\mu+1)}(b-t+\alpha+\mu-1)^{(\alpha+\mu)}=\frac{\Gamma(\mu+1) } {\Gamma(\alpha+\mu+1)} (b-t)^{\overline{\alpha+\mu}}
\end{equation}
\end{proof}

Similarly, for the nabla left fractional sum we can have the following power formula and exponent law

\begin{prop} \label{power nabla left}(see \cite{THFer} and \cite{Thsh})
Let $\alpha >0,~ \mu >-1$. Then, for $t \in \mathbb{N}_a$ , we have
\begin{equation}\label{pnl1}
\nabla_a ^{-\alpha} (t-a)^{\overline{\mu}}=\frac{\Gamma(\mu+1) } {\Gamma(\alpha+\mu+1)} (t-a)^{\overline{\alpha+\mu}}
\end{equation}

\end{prop}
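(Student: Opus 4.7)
The plan is to mirror, step by step, the proof of Proposition \ref{nabla right power}, swapping left-sided and right-sided operators throughout. The first step is to invoke the left dual identity (Lemma \ref{left dual}(ii)) to rewrite the nabla left sum as a delta left sum evaluated at a shifted argument:
$$\nabla_a^{-\alpha}(t-a)^{\overline{\mu}} = \Delta_a^{-\alpha}(s-a)^{\overline{\mu}}\big|_{s=t+\alpha}.$$
Because Lemma \ref{left dual} was established using the older nabla-sum convention (starting at $s=a$), I must account for the difference between that convention and the one used here (starting at $s=a+1$). That difference is a single boundary term proportional to $(s-a)^{\overline{\mu}}\big|_{s=a}=0^{\overline{\mu}}=0$, so it drops out and the displayed identity holds under the present convention as well.

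Next, I unpack the right-hand side using Definition \ref{fractional sums}(i), apply identity (\ref{oper2}) to convert the rising factorial $(s-a)^{\overline{\mu}}$ inside the sum into the falling factorial $(s-a+\mu-1)^{(\mu)}$, and perform the change of summation variable $r=s+\mu-1$. This substitution is designed so that $(s-a+\mu-1)^{(\mu)}$ becomes $(r-a)^{(\mu)}$ and the kernel $(t-s+\alpha-1)^{(\alpha-1)}$ becomes $(v-\sigma(r))^{(\alpha-1)}$ with $v=t+\mu+\alpha-1$. The resulting sum runs over $r\in\{a+\mu-1,\dots,v-\alpha\}$, and the lowest surplus index $r=a+\mu-1$ contributes nothing because $(\mu-1)^{(\mu)}=\Gamma(\mu)/\Gamma(0)=0$ by the pole convention. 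Hence the sum is exactly $\Delta_{a+\mu}^{-\alpha}(r-a)^{(\mu)}(v)$.

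I then invoke the left-sided counterpart of Lemma \ref{power}, namely
$$\Delta_{a+\mu}^{-\alpha}(r-a)^{(\mu)}(v)=\frac{\Gamma(\mu+1)}{\Gamma(\mu+\alpha+1)}(v-a)^{(\mu+\alpha)},$$
which is derived by the same Chu--Vandermonde convolution that underlies Lemma \ref{power} (and can also be read off as a mirror image of that lemma). Substituting $v-a=t+\mu+\alpha-1-a$ and converting the resulting falling factorial back to a rising factorial via one final application of (\ref{oper2}) yields $(t-a)^{\overline{\mu+\alpha}}$, which establishes the claimed identity. The main obstacle is careful index bookkeeping through the change of variable and verifying the shifted-base delta power rule; everything else is symmetric to the right-sided proof already supplied.
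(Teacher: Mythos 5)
Your proof is correct and follows essentially the route the paper intends: the paper gives no separate argument for this left-sided proposition, stating only that it follows ``similarly'' to Proposition \ref{nabla right power}, and your argument is precisely that mirror image --- dualizing via Lemma \ref{left dual}(ii) (with the needed adjustment for the convention shift from (\ref{remind}), where the boundary term indeed vanishes for this power function), converting rising to falling factorials by (\ref{oper2}), shifting the summation index, and invoking the left-sided delta power rule from \cite{Ferd} in place of Lemma \ref{power}. The index bookkeeping, including discarding the surplus term via $(\mu-1)^{(\mu)}=0$ at a pole of $\Gamma$, checks out.
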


\begin{prop}\label{lsemi}(see \cite{THFer} and \cite{Thsh})
Let $f$ be a real valued function defined on $\mathbb{N}_a$, and let $\alpha, \beta >0$. Then

\begin{equation}\label{lsemi1}
    \nabla_a^{-\alpha}[\nabla_a^{-\beta}f(t)]= \nabla_a^{-(\alpha+\beta)}f(t)=  \nabla_a^{-\beta}[\nabla_a^{-\alpha}f(t)]
\end{equation}
\end{prop}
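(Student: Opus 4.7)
The plan is to mimic the proof of Proposition~\ref{semi} verbatim, trading the right-sided dual identity Lemma~\ref{right dual}(ii) for the left-sided dual identity Lemma~\ref{left dual}(ii) and trading the delta right semigroup rule Theorem~\ref{bcomp} for its left counterpart $\Delta_{a+\beta}^{-\alpha}\Delta_a^{-\beta}f(t)=\Delta_a^{-(\alpha+\beta)}f(t)$, which is the classical delta left semigroup identity (the left analog of Theorem~\ref{bcomp}, established in Miller--Ross and Atici--Eloe and provable by the same power-rule/change-of-variable argument used throughout Section~3).

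Concretely, I would first apply Lemma~\ref{left dual}(ii) to the inner sum to get $\nabla_a^{-\beta}f(t)=(\Delta_a^{-\beta}f)(t+\beta)$ for $t\in\mathbb{N}_a$. Next, I would apply $\nabla_a^{-\alpha}$ and use Lemma~\ref{left dual}(ii) a second time on the composite; after the index substitution $r=s+\beta$ in the resulting delta sum (which shifts the base from $a$ to $a+\beta$) this yields
\begin{equation*}
\nabla_a^{-\alpha}\bigl[\nabla_a^{-\beta}f\bigr](t)=\bigl(\Delta_{a+\beta}^{-\alpha}\Delta_a^{-\beta}f\bigr)(t+\alpha+\beta).
\end{equation*}
Then I would invoke the delta left semigroup identity to collapse the right-hand side to $(\Delta_a^{-(\alpha+\beta)}f)(t+\alpha+\beta)$, and a third application of Lemma~\ref{left dual}(ii) would convert this back to $\nabla_a^{-(\alpha+\beta)}f(t)$, establishing the first equality. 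The commutativity asserted in the second equality then follows at once by interchanging the roles of $\alpha$ and $\beta$, since the outcome $\nabla_a^{-(\alpha+\beta)}f(t)$ is symmetric in them.

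The main obstacle is purely bookkeeping: because $\Delta_a^{-\beta}f$ is naturally defined on $\mathbb{N}_{a+\beta}$, the outer delta sum must start at $a+\beta$ rather than $a$, and one must check that the dual identity is being applied to a function whose domain matches what Lemma~\ref{left dual}(ii) requires. As a reassuring cross-check (and as a fallback if one prefers not to cite the delta left semigroup law separately), the identity can be proved directly: expand both nabla sums, exchange the order of summation, and recognize the inner sum as $\nabla_{s-1}^{-\alpha}$ applied to $(\tau-\rho(s))^{\overline{\beta-1}}$ evaluated at $\tau=t$; then the nabla power rule Proposition~\ref{power nabla left} (with base $s-1$ and $\mu=\beta-1$) evaluates this to $\frac{\Gamma(\alpha)\Gamma(\beta)}{\Gamma(\alpha+\beta)}(t-\rho(s))^{\overline{\alpha+\beta-1}}$, which collapses the double sum to the single sum defining $\nabla_a^{-(\alpha+\beta)}f(t)$.
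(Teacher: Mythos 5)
Your proposal matches the paper's own proof, which offers precisely your two routes (only with the priorities reversed): the paper's primary argument is your ``fallback'' --- expand both sums, interchange the order of summation, and apply the power rule (\ref{pnl1}) --- while its stated alternative is your main plan of mimicking Proposition \ref{semi} via the left dual identity and the delta left semigroup law. The one caveat is that Lemma \ref{left dual}(ii) is stated for the convention (\ref{remind}), so under Definition \ref{fractional sums}(iii) it must first be rearranged to $\nabla_a^{-\beta}f(t)=(\Delta_{a+1}^{-\beta}f)(t+\beta)$ --- the ``arrangement according to our definitions'' the paper flags --- which shifts your delta bases by one but is exactly the bookkeeping you anticipated and does not affect the conclusion.
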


\begin{proof}
The proof can be achieved as in Theorem 2.1 \cite{Nabla}, by expressing the left hand side of (\ref{lsemi1}), interchanging the order of summation and using the power formula (\ref{pnl1}). Alternatively, the proof can be done by following as in the proof of Proposition \ref{semi} with the help of the dual formula for left fractional sum in Lemma \ref{left dual} after its arrangement according to our definitions.
\end{proof}

\section{Caputo fractional differences}

In analogous to the usual fractional calculus we can formulate the following definition
\begin{defn}\label{cd}
Let  $\alpha>0,~\alpha \notin \mathbb{N}$. Then,

(i)\cite{Th Caputo} the delta $\alpha-$order Caputo left  fractional difference of a function $f$ defined on $\mathbb{N}_a$  is defined by
\begin{equation} \label{rd}
~^{C}\Delta_a^\alpha f(t)\triangleq\Delta_a ^{-(n-\alpha)}\Delta
^nf(t)=\frac{1}{\Gamma(n-\alpha)}
\sum_{s=a}^{t-(n-\alpha)}(t-\sigma(s))^{(n-\alpha-1)}\Delta_s^nf(s)
\end{equation}

(ii) \cite{Th Caputo} the delta $\alpha-$ order Caputo right  fractional difference of a function $f$ defined on $~_{b}\mathbb{N}$  is defined by

\begin{equation} \label{ld}
~^{C}_{b}\Delta^\alpha f(t)\triangleq ~_{b}\Delta ^{-(n-\alpha)}\nabla_{\ominus}^nf(t)=\frac{1}{\Gamma(n-\alpha)}
\sum_{s=t+(n-\alpha)}^b(s-\sigma(t))^{(n-\alpha-1)}\nabla_{\ominus}^nf(s)
\end{equation}
where $n=[\alpha]+1$.

If $\alpha =n\in \mathbb{N}$, then
$$~^{C}\Delta_a^\alpha f(t)\triangleq \Delta^n f(t)~~\texttt{and}~
~^{C}_{b}\Delta^\alpha f(t)\triangleq \nabla_b^n f(t)$$
\end{defn}

(iii) the nabla $\alpha-$order Caputo left  fractional difference of a function $f$ defined on $\mathbb{N}_a$ and some points before $a$, is defined by

\begin{equation}\label{cnl}
  ~^{C}\nabla_a^\alpha f(t)\triangleq \nabla_a ^{-(n-\alpha)}\nabla
^nf(t)=\frac{1}{\Gamma(n-\alpha)}
\sum_{s=a+1}^{t-(n-\alpha)}(t-\rho(s))^{\overline{n-\alpha-1}}\nabla^nf(s)
\end{equation}

(iv) the nabla $\alpha-$order Caputo right fractional difference of a function $f$ defined on $~_{b}\mathbb{N}$ and some points after $b$, is defined by

\begin{equation}\label{cnr}
  ~^{C}_{b}\nabla^\alpha f(t)\triangleq ~_{b}\nabla ^{-(n-\alpha)}~{a}\Delta
^nf(t)=\frac{1}{\Gamma(n-\alpha)}
\sum_{s=t}^{b-1}(s-\rho(t))^{\overline{n-\alpha-1}}~_{\ominus}\Delta^nf(s)
\end{equation}

If $\alpha =n\in \mathbb{N}$, then
$$~^{C}\nabla_a^\alpha f(t)\triangleq \nabla^n f(t)~~~\texttt{and}~~
~^{C}_{b}\nabla^\alpha f(t)\triangleq ~_{a}\Delta^n f(t)$$

It is clear that $~^{C}\Delta_a^\alpha$ maps functions defined on
$\mathbb{N}_a$ to functions defined on $\mathbb{N}_{a+(n-\alpha)}$, and that
$~^{C}_{b}\Delta^\alpha$ maps functions defined on $_{b}\mathbb{N}$ to functions
defined on $_{b-(n-\alpha)}\mathbb{N}$.
Also, it is clear that the nabla left fractional difference $\nabla_a^\alpha$ maps functions defined on $\mathbb{N}_a$ to functions defined on $\mathbb{N}_{a+1-n}$ and the nabla right fractional difference $~_{b}\nabla^\alpha$ maps functions defined on $~_{b}\mathbb{N}$ to functions defined on $~_{b-1+n}\mathbb{N}$.

Riemann and Caputo delta fractional differences are related by the following theorem

\begin{thm} \label{relate} \cite{Th Caputo}
For any $\alpha>0$, we have
\begin{equation}\label{relate1}
~^{C}\Delta_a^\alpha f(t)=\Delta_a^\alpha f(t)-\sum_{k=0}^{n-1}
\frac{(t-a)^{(k-\alpha)}}{\Gamma(k-\alpha+1)}\Delta^k f(a)
\end{equation}
and
\begin{equation}\label{relate2}
~_{b}^{C}\Delta^\alpha f(t)=~_{b}\Delta^\alpha f(t)-\sum_{k=0}^{n-1}
\frac{(b-t)^{(k-\alpha)}}{\Gamma(k-\alpha+1)}\nabla_{\ominus}^k f(b).
\end{equation}
In particular, when $0<\alpha<1$, we have
\begin{equation}\label{relate2}
~^{C}\Delta_a f(t)=\Delta_a^\alpha f(t)-
\frac{(t-a)^{(-\alpha)}}{\Gamma(1-\alpha)} f(a).
\end{equation}

\begin{equation}\label{relate4}
~_{b}^{C}\Delta f(t)=~_{b}\Delta^\alpha f(t)-
\frac{(b-t)^{(-\alpha)}}{\Gamma(1-\alpha)} f(b)
\end{equation}
\end{thm}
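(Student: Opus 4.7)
The plan is to derive both formulas by commuting the $n$-th order difference through the fractional sum appearing in the definition of the Caputo operator. For the left delta case, I would start from
$$~^{C}\Delta_a^\alpha f(t) = \Delta_a^{-(n-\alpha)}\Delta^n f(t), \qquad \Delta_a^\alpha f(t) = \Delta^n \Delta_a^{-(n-\alpha)}f(t),$$
so the task reduces to evaluating the commutator $\Delta_a^{-(n-\alpha)}\Delta^n - \Delta^n \Delta_a^{-(n-\alpha)}$. The single-step commutation is exactly Lemma \ref{ATO}, which handles $\Delta_a^{-\alpha}\Delta$ in exchange for a boundary correction at $s=a$.

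First I would establish a delta analogue of Theorem \ref{LNg}: for any $\alpha>0$ and positive integer $p$,
\begin{equation}\label{deltaLNg}
\Delta_a^{-\alpha}\Delta^p f(t)=\Delta^p\Delta_a^{-\alpha}f(t)-\sum_{k=0}^{p-1}\frac{(t-a)^{(\alpha-p+k)}}{\Gamma(\alpha-p+k+1)}\Delta^k f(a).
\end{equation}
The base case $p=1$ is Lemma \ref{ATO} (after converting the correction coefficient via (\ref{oper2}) so that it appears in the falling-factorial form above). The inductive step writes $\Delta^{p+1}f = \Delta^p(\Delta f)$, applies the inductive hypothesis to $\Delta f$, then uses Lemma \ref{ATO} once more on the leading $\Delta_a^{-\alpha}\Delta f$ term; the new boundary term is then processed using $\Delta t^{(\mu)}=\mu\, t^{(\mu-1)}$ from Lemma \ref{pfp}(i), so that a single $\Delta$ outside can be absorbed into each correction factor and the summation index can be reindexed to recover the stated form.

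With (\ref{deltaLNg}) in hand, I set $p=n$ and replace $\alpha$ by $n-\alpha$. The left-hand side becomes $~^{C}\Delta_a^\alpha f(t)$ and the leading right-hand term becomes $\Delta_a^\alpha f(t)$, while the sum simplifies to $\sum_{k=0}^{n-1}\frac{(t-a)^{(k-\alpha)}}{\Gamma(k-\alpha+1)}\Delta^k f(a)$, which is (\ref{relate1}). For the right-delta identity I run the mirror argument, starting instead from Lemma \ref{TD} for the base case and inductively building a right-delta version of (\ref{deltaLNg}) with $\nabla_\ominus$ in place of $\Delta$ and the expansion point $b$ in place of $a$; substituting $p=n$, $\alpha\rightsquigarrow n-\alpha$ yields (\ref{relate2}). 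The two particular cases then follow by taking $n=1$, so that only the $k=0$ term in each sum survives.

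The main obstacle is the bookkeeping in the inductive step of (\ref{deltaLNg}): one must show that applying $\Delta^{p-1-k}$ to the base correction $(t-a)^{(\alpha-1)}/\Gamma(\alpha)$ reproduces $(t-a)^{(\alpha-p+k)}/\Gamma(\alpha-p+k+1)$, and that the shift from a rising factorial in Lemma \ref{ATO} to the falling factorial demanded by the target identity goes through cleanly (via (\ref{oper2}) together with Lemma \ref{pfp}(i)). Once that verification is in place, the rest of the argument is a direct substitution and specialization.
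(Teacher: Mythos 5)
Your plan is essentially the paper's own: the paper imports Theorem \ref{relate} from \cite{Th Caputo} without reproving it here, but it proves the nabla analogue (Theorem \ref{nabla relate}) in exactly the way you propose for the delta case, namely by specializing a commutation formula of the type $\Delta_a^{-\alpha}\Delta^p = \Delta^p\Delta_a^{-\alpha} - (\text{boundary terms at } a)$ with $p=n$ and $\alpha\rightsquigarrow n-\alpha$; your inductive construction of the delta analogue of Theorem \ref{LNg} from the single-step Lemma \ref{ATO}, and the mirror argument from Lemma \ref{TD} for the right case, is the intended route. One caveat on your ``main obstacle'': identity (\ref{oper2}) reads $t^{\overline{\alpha}}=(t+\alpha-1)^{(\alpha)}$, so it converts $(t-a)^{\overline{\alpha-1}}$ into $(t-a+\alpha-2)^{(\alpha-1)}$, \emph{not} into the $(t-a)^{(\alpha-1)}$ that your base case of (\ref{deltaLNg}) requires; the correct resolution is that the commutation lemma for the delta sum already carries the falling-factorial correction $\frac{(t-a)^{(\alpha-1)}}{\Gamma(\alpha)}f(a)$ (the rising factorial in the displayed Lemma \ref{ATO} is a misprint relative to the source \cite{Ferd}), so no conversion is needed and the rest of your induction and the final substitution go through as written.
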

One can note that the Riemann and Caputo fractional differences, for
$0<\alpha<1$, coincide when $f$ vanishes at the end points.

The following identity is useful to transform delta type Caputo fractional
difference equations  into fractional summations.

\begin{prop} \label{trans}\cite{Th Caputo}
Assume $\alpha>0$ and $f$ is defined on suitable domains $\mathbb{N}_a$
and $_{b}\mathbb{N}$. Then
\begin{equation}\label{trans1}
\Delta_{a+(n-\alpha)}^{-\alpha} ~^{C}\Delta_a^\alpha
f(t)=f(t)-\sum_{k=0}^{n-1}\frac{(t-a)^{(k)}}{k!}\Delta^kf(a)
\end{equation}
and
\begin{equation}\label{trans2}
~_{b-(n-\alpha)}\Delta^{-\alpha} ~_{b}^{C}\Delta^\alpha
f(t)=f(t)-\sum_{k=0}^{n-1}\frac{(b-t)^{(k)}}{k!}\nabla_{\ominus}^kf(b)
\end{equation}
In particular, if $0<\alpha\leq1$ then
\begin{equation}\label{trans3}
\Delta_{a+(n-\alpha)}^{-\alpha} ~^{C}\Delta_a^\alpha f(t)= f(t)-f(a)~~\texttt{and}~~
 _{b-(n-\alpha)}\Delta^{-\alpha}
~^{C}_{b}\Delta^\alpha f(t)=f(t)-f(b).
\end{equation}

\end{prop}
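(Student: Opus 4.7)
The plan is to peel off the Caputo operator by its definition, collapse the resulting composition of fractional sums by a shifted semigroup law, and then apply a discrete Taylor-type identity.

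For the left-sided identity (\ref{trans1}), Definition \ref{cd}(i) gives
$$\Delta_{a+(n-\alpha)}^{-\alpha}\,{}^{C}\Delta_a^\alpha f(t) = \Delta_{a+(n-\alpha)}^{-\alpha}\Delta_a^{-(n-\alpha)}\Delta^n f(t).$$
The first step is the shifted semigroup law for left delta fractional sums,
$$\Delta_{a+\beta}^{-\alpha}\Delta_a^{-\beta} g(t) = \Delta_a^{-(\alpha+\beta)} g(t),$$
which is the left analogue of Theorem \ref{bcomp}. I would prove it by expanding the two iterated sums, interchanging the order of summation, and evaluating the inner sum via a Chu--Vandermonde identity for falling factorials; alternatively one can pass to the nabla side via Lemma \ref{left dual}(ii) and invoke Proposition \ref{lsemi}. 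Taking $\beta = n-\alpha$ collapses the expression to $\Delta_a^{-n}\Delta^n f(t)$.

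The second step is the discrete Taylor identity
$$\Delta_a^{-n}\Delta^n f(t) = f(t) - \sum_{k=0}^{n-1}\frac{(t-a)^{(k)}}{k!}\,\Delta^k f(a).$$
The cleanest route is the delta analogue of Theorem \ref{LNg}: by iterating Lemma \ref{ATO} (read with the falling factorial $(t-a)^{(\alpha-1)}$, as it specializes for integer orders), one obtains
$$\Delta_a^{-\alpha}\Delta^p f(t) = \Delta^p\Delta_a^{-\alpha}f(t) - \sum_{k=0}^{p-1}\frac{(t-a)^{(\alpha-p+k)}}{\Gamma(\alpha+k-p+1)}\,\Delta^k f(a).$$
Specializing to $\alpha=p=n$ and using the elementary identity $\Delta^n\Delta_a^{-n}f(t) = f(t)$ (which follows by induction from $\Delta\Delta_a^{-1}f(t) = f(t)$) produces (\ref{trans1}).

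The right-sided identity (\ref{trans2}) is proved by the mirror argument: unwrap using Definition \ref{cd}(ii), invoke the right shifted semigroup $~_{b-\beta}\Delta^{-\alpha}~_{b}\Delta^{-\beta}g(t) = ~_{b}\Delta^{-(\alpha+\beta)}g(t)$ (the direct counterpart of Theorem \ref{bcomp} with shifted basepoints) to reduce to $~_{b}\Delta^{-n}\nabla_\ominus^n f(t)$, and then apply the right Taylor identity
$$~_{b}\Delta^{-n}\nabla_\ominus^n f(t) = f(t) - \sum_{k=0}^{n-1}\frac{(b-t)^{(k)}}{k!}\,\nabla_\ominus^k f(b),$$
obtained analogously by iterating Lemma \ref{TD}. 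The statements in (\ref{trans3}) are simply the $n=1$ specializations. The delicate point throughout is the bookkeeping of shifted basepoints in the semigroup step; once that is handled, the remainder is a routine iteration of the integration-by-parts-style lemmas already established in the excerpt.
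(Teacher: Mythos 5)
Your proposal is correct and follows essentially the same route as the source: unwrap the Caputo definition, collapse $\Delta_{a+(n-\alpha)}^{-\alpha}\Delta_a^{-(n-\alpha)}$ to $\Delta_a^{-n}$ by the (shifted) semigroup law for delta fractional sums, and finish with the discrete Taylor formula obtained by iterating Lemma \ref{ATO} (resp.\ Lemma \ref{TD}) together with $\Delta^n\Delta_a^{-n}f=f$; this mirrors exactly how the paper proves the nabla analogue, Proposition \ref{nabla trans}, via Proposition \ref{lsemi} and (\ref{lbsde2}). Your remark that Lemma \ref{ATO} must be read with the falling factorial $(t-a)^{(\alpha-1)}$ for the integer specialization to produce $(t-a)^{(k)}/k!$ is a fair catch of a typographical inconsistency in the stated lemmas rather than a gap in your argument.
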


Similar to what we have  above, for the nabla fractional differences we obtain

\begin{thm} \label{nabla relate}
For any $\alpha>0$, we have
\begin{equation}\label{nrelate1}
~^{C}\nabla_a^\alpha f(t)=\nabla_a^\alpha f(t)-\sum_{k=0}^{n-1}
\frac{(t-a)^{\overline{k-\alpha}}}{\Gamma(k-\alpha+1)}\nabla^k f(a)
\end{equation}
and
\begin{equation}\label{nrelate2}
~_{b}^{C}\nabla^\alpha f(t)=~_{b}\nabla^\alpha f(t)-\sum_{k=0}^{n-1}
\frac{(b-t)^{\overline{k-\alpha}}}{\Gamma(k-\alpha+1)}~_{\ominus}\Delta^k f(b).
\end{equation}
In particular, when $0<\alpha<1$, we have
\begin{equation}\label{nrelate2}
~^{C}\nabla_a^\alpha f(t)=\nabla_a^\alpha f(t)-
\frac{(t-a)^{\overline{-\alpha}}}{\Gamma(1-\alpha)} f(a)
\end{equation}
and
\begin{equation}\label{nrelate4}
~_{b}^{C}\nabla^\alpha f(t)=~_{b}\nabla^\alpha f(t)-
\frac{(b-t)^{\overline{-\alpha}}}{\Gamma(1-\alpha)} f(b)
\end{equation}
\end{thm}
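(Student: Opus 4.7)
The plan is to derive both identities as direct applications of the generalized Leibniz-type formulas already established as Theorem \ref{LNg} and Theorem \ref{RNg}. The core observation is that the Caputo difference applies the ordinary $\nabla^{n}$ (or $~_{\ominus}\Delta^{n}$) \emph{before} the fractional sum, whereas the Riemann difference applies it \emph{after}; hence the correction terms in (\ref{nrelate1}) and (\ref{nrelate2}) are precisely the boundary sums produced by commuting the two operators via Theorems \ref{LNg} and \ref{RNg}.

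For (\ref{nrelate1}), I would start from the definition
$$~^{C}\nabla_a^\alpha f(t) = \nabla_a^{-(n-\alpha)} \nabla^n f(t)$$
and invoke Theorem \ref{LNg}, but with its parameter $\alpha$ relabeled as $n-\alpha$ and with $p=n$. The identity (\ref{LNg1}) is applicable here because Remark \ref{lforany} guarantees its validity for any real exponent, in particular for $n-\alpha$. This yields
$$\nabla_a^{-(n-\alpha)} \nabla^n f(t) = \nabla^n \nabla_a^{-(n-\alpha)} f(t) - \sum_{k=0}^{n-1}\frac{(t-a)^{\overline{(n-\alpha)-n+k}}}{\Gamma((n-\alpha)+k-n+1)} \nabla^k f(a).$$
The leading term equals $\nabla_a^\alpha f(t)$ by definition (\ref{nld}), and the exponent simplification $(n-\alpha)-n+k = k-\alpha$ (and likewise in the gamma argument) converts the sum into the one appearing in (\ref{nrelate1}).

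For (\ref{nrelate2}), the completely symmetric route applies on the right-hand side. Starting from
$$~_{b}^{C}\nabla^\alpha f(t) = ~_{b}\nabla^{-(n-\alpha)} ~_{\ominus}\Delta^n f(t)$$
and invoking Theorem \ref{RNg} (again with $\alpha \mapsto n-\alpha$ and $p=n$, justified by Remark \ref{forany}), I commute $~_{\ominus}\Delta^n$ past the nabla right fractional sum. The leading term is exactly $~_{b}\nabla^\alpha f(t)$ by (\ref{nrd}), and the same index arithmetic delivers the correction sum in (\ref{nrelate2}). The two specializations to $0<\alpha<1$ are then immediate: in that range $n=[\alpha]+1=1$, so each sum collapses to its single $k=0$ term, producing (\ref{nrelate2}) (the second display) and (\ref{nrelate4}).

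The proof has essentially no obstacle beyond bookkeeping, since the heavy lifting was already done in Theorems \ref{LNg} and \ref{RNg}. The one place to exercise care is the reindexing: one must not confuse the dummy $\alpha$ in the general commutation formulas with the Caputo order $\alpha$, and one must verify that substituting $n-\alpha$ for the exponent remains legitimate, which is exactly the content of Remarks \ref{lforany} and \ref{forany}. Everything else reduces to recognizing the definitions (\ref{nld}), (\ref{nrd}), (\ref{cnl}), (\ref{cnr}) on the two sides of the commuted expressions.
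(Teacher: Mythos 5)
Your proposal is correct and is exactly the paper's argument: the paper's proof consists of the single sentence ``replace $\alpha$ by $n-\alpha$ and $p$ by $n$ in Theorem \ref{LNg} and Theorem \ref{RNg}, respectively,'' which is precisely the substitution and reindexing you carry out. Your version simply makes the bookkeeping explicit.
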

\begin{proof}
The proof follows by replacing $\alpha$ by $n-\alpha$ and $p$ by $n$ in Theorem \ref{LNg} and Theorem \ref{RNg}, respectively.
\end{proof}

One can see that the nabla Riemann and Caputo fractional differences, for
$0<\alpha<1$, coincide when $f$ vanishes at the end points.

\begin{prop} \label{nabla trans}
Assume $\alpha>0$ and $f$ is defined on suitable domains $\mathbb{N}_a$
and $_{b}\mathbb{N}$. Then
\begin{equation}\label{ntrans1}
\nabla_a^{-\alpha} ~^{C}\nabla_a^\alpha
f(t)=f(t)-\sum_{k=0}^{n-1}\frac{(t-a)^{\overline{k}}}{k!}\nabla^kf(a)
\end{equation}
and
\begin{equation}\label{ntrans2}
~_{b}\nabla^{-\alpha} ~_{b}^{C}\nabla^\alpha
f(t)=f(t)-\sum_{k=0}^{n-1}\frac{(b-t)^{\overline{k}}}{k!}~_{\ominus}\Delta^kf(b).
\end{equation}
In particular, if $0<\alpha\leq1$ then
\begin{equation}\label{ntrans3}
\nabla_a^{-\alpha} ~^{C}\nabla_a^\alpha f(t)= f(t)-f(a)~~\texttt{and}~~
~_{b}\nabla^{-\alpha} ~_{b}^{C}\nabla^\alpha f(t)=f(t)-f(b)
\end{equation}

\end{prop}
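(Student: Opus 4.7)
The plan is to reduce both identities to direct applications of Theorems \ref{LNg} and \ref{RNg}. For the left formula, I begin by unfolding the definition of the Caputo left difference:
\[
\nabla_a^{-\alpha}\, {}^{C}\nabla_a^\alpha f(t) = \nabla_a^{-\alpha}\bigl[\nabla_a^{-(n-\alpha)}\nabla^n f(t)\bigr].
\]
Applying the semigroup property of nabla left fractional sums (Proposition \ref{lsemi}) collapses the composition into $\nabla_a^{-n}\nabla^n f(t)$.

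Next I invoke Theorem \ref{LNg} with $\alpha$ replaced by $n$ and $p=n$. The exponent $\overline{\alpha-p+k}$ then specializes to $\overline{k}$ and the gamma factor $\Gamma(\alpha+k-p+1)$ specializes to $\Gamma(k+1)=k!$, so the correction sum reads exactly $\sum_{k=0}^{n-1}\frac{(t-a)^{\overline{k}}}{k!}\nabla^k f(a)$. The remaining main term $\nabla^n \nabla_a^{-n} f(t)$ equals $f(t)$ by observation (ii) following Definition \ref{fractional sums}, which says $\nabla_a^{-n}f$ is precisely the $n$-fold nabla antiderivative of $f$ with vanishing initial data at $a$. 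This yields (\ref{ntrans1}).

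For the right formula the argument is entirely parallel: I unfold ${}_{b}^{C}\nabla^\alpha f = {}_{b}\nabla^{-(n-\alpha)}\,{}_{\ominus}\Delta^n f$, apply Proposition \ref{semi} to merge ${}_{b}\nabla^{-\alpha}\,{}_{b}\nabla^{-(n-\alpha)}$ into ${}_{b}\nabla^{-n}$, and then invoke Theorem \ref{RNg} with $\alpha$ replaced by $n$ and $p=n$ to produce the boundary correction $\sum_{k=0}^{n-1}\frac{(b-t)^{\overline{k}}}{k!}\,{}_{\ominus}\Delta^k f(b)$. The leading term ${}_{\ominus}\Delta^n \,{}_{b}\nabla^{-n}f(t)$ equals $f(t)$ by observation (ii) for the nabla right fractional sum, which was explicitly verified in the excerpt via (\ref{t1})--(\ref{t2}).

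I do not foresee any real obstacle; the entire argument is routine bookkeeping once the semigroup property and the general reduction formulas of Theorems \ref{LNg} and \ref{RNg} are in hand. The only point to be careful about is verifying that the specialization $\alpha \mapsto n,\ p \mapsto n$ produces the exact rising-factorial and gamma factors appearing in the statement, which it does. The special case $0<\alpha\le 1$ (so $n=1$) then follows at once by truncating each sum to its single $k=0$ term, yielding $f(t)-f(a)$ and $f(t)-f(b)$ respectively.
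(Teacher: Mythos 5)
Your proposal is correct and follows essentially the same route as the paper: unfold the Caputo definition, merge the two fractional sums via the semigroup properties (Propositions \ref{lsemi} and \ref{semi}), and then evaluate $\nabla_a^{-n}\nabla^n f$ and ${}_{b}\nabla^{-n}\,{}_{\ominus}\Delta^n f$. The only cosmetic difference is that the paper cites identities (\ref{lbsde2}) and (\ref{bsde2}) of Propositions \ref{lbsdandds} and \ref{bsdandds} for that last step, whereas you re-derive those integer-order identities directly from Theorems \ref{LNg} and \ref{RNg} together with observation (ii) on the $n$-fold sums.
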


\begin{proof}
The proof of (\ref{ntrans1}) follows by the definition and applying Proposition \ref{lsemi} and (\ref{lbsde2}) of Proposition \ref{lbsdandds}. The proof of
(\ref{ntrans2}) follows by the definition and applying Proposition \ref{semi} and (\ref{bsde2}) of Proposition \ref{bsdandds}.
\end{proof}

Using the definition and Proposition \ref{power nabla left} and Proposition \ref{nabla right power}, we can find the nabla type Caputo fractional differences for certain power functions.
 For example, for $1 \neq \beta >0$ and $\alpha\geq 0$ we have
\begin{equation}\label{find left n}
~^{C}\nabla_a^\alpha (t-a)^{\overline{\beta-1}}=\frac{\Gamma(\beta)}   {\Gamma(\beta-\alpha)} (t-a)^{\overline{\beta-\alpha-1}}
\end{equation}
and
\begin{equation}\label{find right n}
~^{C}_{b}\nabla^\alpha (b-t)^{\overline{\beta-1}}=\frac{\Gamma(\beta)}   {\Gamma(\beta-\alpha)}(b-t)^{\overline{\beta-\alpha-1}}.
\end{equation}
However,
\begin{equation}\label{find3}
~^{C}\nabla_a^\alpha 1=~^{C}_{b}\nabla^\alpha1=0
\end{equation}
whereas
\begin{equation}\label{find4}
\nabla_a^\alpha 1=
\frac{(t-a)^{(-\alpha)}}{\Gamma(1-\alpha)},~~~_{b}\nabla^\alpha
1=\frac{(b-t)^{(-\alpha)}}{\Gamma(1-\alpha)}.
\end{equation}

In the above formulae (\ref{find left n}) and (\ref{find right n}), we apply the convention that dividing over a pole leads to zero. Therefore the fractional difference when $\beta-1=\alpha-j,~~~j=1,2,...,n$ is zero.

\begin{rem} \label{relate to} The results obtained in Theorem
\ref{relate}and afterward  agree with those in the usual
continuous case (See \cite{Kilbas} pages 91,96).
\end{rem}

\section{A dual nabla Caputo fractional difference}

In the previous section the nabla Caputo fractional difference is defined under the assumption that $f$ is known before $a$ in the left case and under the assumption that $f$ is known after $b$ in the right case. In this section we define other nabla Caputo fractional differences for which not necessary to request any information about $f$ before $a$ or after $b$. Since we shall show that these Caputo fractional differences are the dual ones for the delta Caputo fractional differences, we call them dual nabla Caputo fractional differences.

\begin{defn}
Let $\alpha >0,~ n=[\alpha]+1,~a(\alpha)=a+n-1$ and $b(\alpha)=b-n+1$. Then the dual nabla left and right Caputo fractional differences are defined by
\begin{equation}\label{Cdual left}
   ~^{C}\nabla_{a(\alpha)}^\alpha f(t)=\nabla_{a(\alpha)}^{-(n-\alpha)} \nabla^n f(t),~~t \in \mathbb{N}_{a+n}
\end{equation}

and
\begin{equation}\label{Cdual right}
  _{b(\alpha)}~ ^{C}\nabla^\alpha f(t)=~_{b(\alpha)}\nabla^{-(n-\alpha)} {\ominus}\Delta^n f(t), ~~t \in ~_{b-n}\mathbb{N},
\end{equation}
respectively.
\end{defn}
Notice that the Caputo and the dual Caputo differences coincide when $0<\alpha\leq 1$ and differ for higher order. That is for $0<\alpha\leq 1$
$$~^{C}\nabla_{a(\alpha)}^\alpha f(t)=~^{C}\nabla_a^\alpha f(t)~~and ~~ _{b(\alpha)} ^{C}\nabla^\alpha f(t)= ~_{b} ^{C}\nabla^\alpha f(t).$$

The following proposition states a dual relation between left delta Caputo fractional differences and left nabla (dual) Caputo fractional differences.

\begin{prop} \label{lCdual}
For $\alpha >0,~ n=[\alpha]+1,~a(\alpha)=a+n-1$, we have

\begin{equation}\label{lCdual one}
   (~^{C} \Delta_a^\alpha f)(t-\alpha)=  (~^{C} \nabla_{a(\alpha)}^\alpha f)(t),~~t \in N_{a+n}.
\end{equation}
\end{prop}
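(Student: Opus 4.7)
The plan is a direct matching of summands. Starting from the definition,
\begin{equation*}
(~^{C}\Delta_a^\alpha f)(t-\alpha) = \frac{1}{\Gamma(n-\alpha)}\sum_{s=a}^{t-n}(t-\alpha-\sigma(s))^{(n-\alpha-1)}\Delta^n f(s),
\end{equation*}
I would first rewrite the kernel in rising-factorial form by applying identity (\ref{oper2}), verifying that
\begin{equation*}
(t-\alpha-\sigma(s))^{(n-\alpha-1)} = (t-\rho(s+n))^{\overline{n-\alpha-1}}.
\end{equation*}
Then, reindexing by $r=s+n$ and using the elementary identity $\Delta^n f(r-n) = \nabla^n f(r)$, the sum becomes
\begin{equation*}
\frac{1}{\Gamma(n-\alpha)}\sum_{r=a+n}^{t}(t-\rho(r))^{\overline{n-\alpha-1}}\nabla^n f(r).
\end{equation*}
Since $a(\alpha)+1 = a+n$, this is exactly the summation representation of $\nabla_{a(\alpha)}^{-(n-\alpha)}\nabla^n f(t) = (~^{C}\nabla_{a(\alpha)}^\alpha f)(t)$ under the convention of Definition~\ref{fractional sums}(iii) and (\ref{Cdual left}), completing the proof.

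The main point of care is keeping the gamma-function arguments and the summation bounds synchronized during the kernel conversion and the reindexing: the choice $a(\alpha) = a+n-1$ on the nabla side is precisely what makes the lower limit of the reindexed sum match the nabla convention of starting the summation at $a(\alpha)+1$. Conceptually, the statement may also be viewed as Lemma~\ref{left dual}(ii) applied with exponent $n-\alpha$ and function $\Delta^n f$, composed with the shift $\Delta^n f(\cdot - n) = \nabla^n f(\cdot)$; the offset $a \mapsto a+n-1$ reflects both the passage from convention (\ref{remind}) to Definition~\ref{fractional sums}(iii) and this $n$-fold composition.
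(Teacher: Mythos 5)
Your proposal is correct and takes essentially the same route as the paper's own proof: both expand the definition, use $\Delta^n f(s)=\nabla^n f(s+n)$, reindex with $r=s+n$, and convert the falling-factorial kernel to the rising one via (\ref{oper2}), landing on the sum from $a+n=a(\alpha)+1$ to $t$. The only cosmetic difference is that you convert the kernel before reindexing while the paper does it afterwards.
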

\begin{proof}
For $~~t \in N_{a+n}$, we have
\begin{eqnarray}\nonumber
% \nonumber to remove numbering (before each equation)
    (~^{C} \Delta_a^\alpha f)(t-\alpha)&=& \frac{1}{\Gamma(n-\alpha)}\sum_{s=a} ^{t-n} (t-\alpha-\sigma(s))^{(n-\alpha-1)}\Delta^n f(s)\\ \nonumber
   &=& \frac{1}{\Gamma(n-\alpha)}\sum_{s=a} ^{t-n} (t-\alpha-\sigma(s))^{(n-\alpha-1)}\nabla^n f(s+n)\\
   &=&   \frac{1}{\Gamma(n-\alpha)}\sum_{r=a+n} ^{t} (t-\rho(r)+n-\alpha-2)^{(n-\alpha-1)}\nabla^n f(r)\\ \nonumber
  &=& \frac{1}{\Gamma(n-\alpha)}\sum_{r=a+n} ^{t} (t-\rho(r))^{\overline{n-\alpha-1}}\nabla^n f(r) \\ \nonumber
   &=&  (~^{C} \nabla_{a(\alpha)}^\alpha f)(t). \nonumber
\end{eqnarray}
\end{proof}
Analogously, the following proposition relates right delta Caputo fractional differences and  right nabla (dual) Caputo fractional differences.
\begin{prop} \label{rCdual}
For $\alpha >0,~ n=[\alpha]+1,~b(\alpha)=b-n+1$, we have

\begin{equation}\label{rCdual one}
   (~^{C} _{b}\Delta^\alpha f)(t+\alpha)=  (~^{C} _{b(\alpha)}\nabla^\alpha f)(t),~~t \in~ _{b-n}\mathbb{N}.
\end{equation}
\end{prop}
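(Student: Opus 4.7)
The proposal is to mirror the calculation used for Proposition \ref{lCdual}, but adapted to the right endpoint. The right-hand delta Caputo difference is a sum running from $t+(n-\alpha)$ to $b$ of factorial-type weights times $\nabla_\ominus^n f$, while the dual right nabla Caputo difference is a sum from $t$ to $b(\alpha)-1=b-n$ of rising-factorial weights times $~_\ominus\Delta^n f$. So the strategy is a shift of summation index that matches the two endpoints, plus a conversion between the falling factorial and the rising factorial.

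First I would write out $(~^{C}_{b}\Delta^\alpha f)(t+\alpha)$ from the definition in (\ref{ld}). Because $\sigma(t+\alpha)=t+\alpha+1$, the summation runs from $s=t+n$ to $s=b$, with weight $(s-\sigma(t+\alpha))^{(n-\alpha-1)}=(s-t-\alpha-1)^{(n-\alpha-1)}$ against $\nabla_\ominus^n f(s)$.

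Next I would perform the backward shift $r=s-n$. Two things happen simultaneously: the summation range becomes $r=t,\ldots,b-n=b(\alpha)-1$, and the difference operator transforms via the elementary identity $\nabla^n f(s)=\Delta^n f(s-n)$, hence
\[
\nabla_\ominus^n f(s)=(-1)^n\nabla^n f(s)=(-1)^n\Delta^n f(s-n)={}_\ominus\Delta^n f(r).
\]
This places the operator in exactly the form required by (\ref{cnr}). For the weight, substitution gives $(r+n-t-\alpha-1)^{(n-\alpha-1)}$, and applying identity (\ref{oper2}) in the form $(r-t+1)^{\overline{n-\alpha-1}}=(r-t+n-\alpha-1)^{(n-\alpha-1)}$ yields exactly $(r-\rho(t))^{\overline{n-\alpha-1}}$. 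Substituting both transformations turns the sum into
\[
\frac{1}{\Gamma(n-\alpha)}\sum_{r=t}^{b(\alpha)-1}(r-\rho(t))^{\overline{n-\alpha-1}}\,{}_\ominus\Delta^n f(r),
\]
which is precisely $(~^{C}_{b(\alpha)}\nabla^\alpha f)(t)$ by (\ref{cnr}).

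There is no real obstacle here beyond the indexing; the only step requiring any care is verifying that the falling-to-rising conversion via (\ref{oper2}) matches after the shift, and that the upper endpoint $b-n$ coincides with $b(\alpha)-1$. Both checks are immediate, so the proof is essentially a two-line rearrangement analogous to the one written out for Proposition \ref{lCdual}.
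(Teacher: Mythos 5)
Your proof is correct and is exactly the argument the paper intends: the paper omits the proof of Proposition \ref{rCdual}, declaring it ``analogous'' to Proposition \ref{lCdual}, whose written-out proof is precisely the mirror of your computation (expand the definition, shift the summation index by $n$ using $\nabla^n f(s)=\Delta^n f(s-n)$, and convert the falling factorial weight to a rising one via (\ref{oper2})). Your index bookkeeping, including the identification of the upper limit $b-n$ with $b(\alpha)-1$, checks out; the only cosmetic slip is citing (\ref{cnr}) rather than (\ref{Cdual right}) for the dual right nabla Caputo difference, but the object you actually write down is the correct dual one.
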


The following theorem modifies Theorem \label{LNg} when $f$ is only defined at $\mathbb{N}_a$.
 \begin{thm} \label{modleft}
 For any real number $\alpha$ and any positive integer $p$, the
following equality holds:
\begin{equation} \label{dLNg1}
\nabla_{a+p-1}^{-\alpha}~\nabla^p f(t)=\nabla^p
\nabla_{a+p-1}^{-\alpha}f(t)-\sum_{k=0}^{p-1}\frac{(t-(a+p-1))^{\overline{\alpha-p+k}}}
{\Gamma(\alpha+k-p+1)}\nabla^k f(a+p-1).
\end{equation}
where $f$ is defined on only $\mathbb{N}_a$ .
 \end{thm}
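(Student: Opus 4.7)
The plan is to apply Theorem \ref{LNg} directly, with the base point $a$ replaced throughout by the shifted point $a+p-1$. Under that substitution, formula (\ref{LNg1}) becomes, term-for-term, exactly the identity (\ref{dLNg1}) that we wish to prove. So once the substitution is justified, no new calculation is required.

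The only subtlety is that Theorem \ref{LNg} is stated under the hypothesis that $f$ is defined on $\mathbb{N}_{a+p-1}$ together with ``some points before'' $a+p-1$, whereas here $f$ is given only on $\mathbb{N}_a$. To handle this I would check, term by term, that the leftmost argument of $f$ actually entering the shifted identity is exactly $a$, and never strictly less. For $\nabla_{a+p-1}^{-\alpha}\nabla^p f(t)$ the sum runs over $s=a+p,\dots,t$, and since $\nabla^p f(s)$ reaches back to $f(s-p)$, the smallest argument is $a$, attained at $s=a+p$. The composition $\nabla^p\nabla_{a+p-1}^{-\alpha}f(t)$ touches $f$ only at indices $s\ge a+p$. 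Each boundary term $\nabla^k f(a+p-1)$ with $0\le k\le p-1$ requires $f(a+p-1-k),\dots,f(a+p-1)$, whose smallest argument is once again $a$.

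Consequently, the shift $a\mapsto a+p-1$ is engineered precisely so that the $p$ past values demanded by the outermost $\nabla^p$ fit inside $\mathbb{N}_a$, and no information about $f$ before $a$ is needed: the ``some points before $a+p-1$'' invoked in the hypothesis of Theorem \ref{LNg} are then just $a,a+1,\dots,a+p-2\in\mathbb{N}_a$. Theorem \ref{LNg} thereby applies and delivers (\ref{dLNg1}) as an immediate corollary. The main (and only minor) obstacle is this bookkeeping of summation ranges and of the depth of the $p$-th order backward differences; once it is checked, there is nothing further to prove.
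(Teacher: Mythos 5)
Your proof is correct and is essentially the paper's own argument in a slightly repackaged form: the paper disposes of this theorem in one line (``apply Remark \ref{lforany} inductively''), i.e., it re-runs the induction behind Theorem \ref{LNg} with the base point moved to $a+p-1$, which is exactly what your substitution $a\mapsto a+p-1$ into the already-proven Theorem \ref{LNg} accomplishes. Your explicit bookkeeping --- the sums start at $s=a+p$, the deepest backward difference $\nabla^p f(a+p)$ and the boundary terms $\nabla^k f(a+p-1)$ reach back exactly to $f(a)$ and no further --- is the only genuinely new content of the statement (namely, why $f$ need only be defined on $\mathbb{N}_a$), and you supply it more carefully than the paper does.
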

 The proof follows by applying   Remark \ref{lforany} inductively.

 \indent

 Similarly, in the right case we have
 \begin{thm}  \label{modright}
 For any real number $\alpha$ and any positive integer $p$, the
following equality holds:
 \begin{equation} \label{dRNg1}
~_{b-p+1}\nabla^{-\alpha}~_{\circleddash}\Delta^p f(t)=~_{\circleddash}\Delta^p
~_{b-p+1}\nabla^{-\alpha}f(t)-\sum_{k=0}^{p-1}\frac{(b-p+1-t)^{\overline{\alpha-p+k}}}
{\Gamma(\alpha+k-p+1)}~_{\ominus}\Delta^k f(b-p+1)
\end{equation}
where $f$ is defined on $_{b}N$ only.
 \end{thm}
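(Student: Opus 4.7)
The plan is to mirror the inductive derivation that yielded Theorem \ref{RNg}, but with the endpoint of the nabla right sum shifted from $b$ to $b-p+1$. I will induct on $p$, and the role of the shift is purely domain-theoretic: it ensures that the sum defining $~_{b-p+1}\nabla^{-\alpha}f$ only samples $f$ at points $\leq b-p$, while the boundary corrections $~_{\ominus}\Delta^k f(b-p+1)$ for $k=0,\ldots,p-1$ only require values $f(b-p+1),\ldots,f(b)$; everything thus stays inside $~_{b}\mathbb{N}$.

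For the base case $p=1$ the claim collapses to
$$~_{b}\nabla^{-\alpha}~_{\circleddash}\Delta f(t) = ~_{\circleddash}\Delta~_{b}\nabla^{-\alpha}f(t) - \frac{(b-t)^{\overline{\alpha-1}}}{\Gamma(\alpha)}f(b),$$
which is Lemma \ref{RN}, extended to arbitrary real $\alpha$ via Remark \ref{forany}. For the inductive step I factor $~_{\circleddash}\Delta^{p+1} = ~_{\circleddash}\Delta \circ ~_{\circleddash}\Delta^p$ and apply the $p=1$ case to $g(t)=~_{\circleddash}\Delta^p f(t)$ with endpoint $b-p$ to obtain
$$~_{b-p}\nabla^{-\alpha}~_{\circleddash}\Delta^{p+1}f(t) = ~_{\circleddash}\Delta~_{b-p}\nabla^{-\alpha}~_{\circleddash}\Delta^p f(t) - \frac{(b-p-t)^{\overline{\alpha-1}}}{\Gamma(\alpha)}~_{\ominus}\Delta^p f(b-p),$$
whose second piece already supplies the $k=p$ term of the target formula. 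I then feed the first piece into the inductive hypothesis (applied with $b$ replaced by $b-1$, so that $b-p = (b-1)-p+1$), rewriting $~_{b-p}\nabla^{-\alpha}~_{\circleddash}\Delta^p f$ as $~_{\circleddash}\Delta^p~_{b-p}\nabla^{-\alpha}f$ minus $p$ boundary terms.

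The remaining work is to push the outstanding $~_{\circleddash}\Delta$ through each coefficient of that inductive sum using
$$\Delta_t(b-p-t)^{\overline{\alpha-p+k}} = -(\alpha-p+k)(b-p-t)^{\overline{\alpha-p+k-1}},$$
so that $~_{\circleddash}\Delta$ lowers the rising exponent by one and simultaneously converts $1/\Gamma(\alpha-p+k+1)$ into $1/\Gamma(\alpha-p+k)$; a straightforward reindexing then aligns these with the $k=0,\ldots,p-1$ terms of the $p+1$ formula. The main obstacle I anticipate is precisely this index bookkeeping: verifying that the isolated boundary term from the base-case application together with the differentiated inductive sum combine to form exactly $\sum_{k=0}^{p}(b-p-t)^{\overline{\alpha-(p+1)+k}}/\Gamma(\alpha+k-p)\,~_{\ominus}\Delta^k f(b-p)$. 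Since this is the very same algebraic manipulation already carried out inductively for Theorem \ref{RNg}, no new ideas are required beyond carrying the shifted endpoint through the calculation and checking that the domain condition ``$f$ defined on $~_{b}\mathbb{N}$ only'' is preserved at each step.
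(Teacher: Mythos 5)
Your proposal is correct and follows essentially the same route as the paper, which merely states that the result ``follows by applying Remark \ref{forany} inductively'': you peel off one $~_{\circleddash}\Delta$ at a time via the $p=1$ commutation rule (Lemma \ref{RN} extended to real $\alpha$), shift the endpoint from $b$ to $b-p$ so that only values of $f$ on $~_{b}\mathbb{N}$ are ever used, and push the remaining difference through the boundary sum with $\Delta_t(b-p-t)^{\overline{\beta}}=-\beta(b-p-t)^{\overline{\beta-1}}$. The index bookkeeping you flag does close up exactly as you predict, so no gap remains.
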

\indent
 Now by using the modified Theorem \ref{modleft} and Theorem \ref{modright} we have

 \begin{thm} \label{nabla relate}
For any $\alpha>0$, we have
\begin{equation}\label{nrelate1}
~^{C}\nabla_{a(\alpha)}^\alpha f(t)=\nabla_{a(\alpha)}^\alpha f(t)-\sum_{k=0}^{n-1}
\frac{(t-a(\alpha))^{\overline{k-\alpha}}}{\Gamma(k-\alpha+1)}\nabla^k f({a(\alpha)})
\end{equation}
and
\begin{equation}\label{nrelate2}
~_{b(\alpha)}^{C}\nabla^\alpha f(t)=~_{b(\alpha)}\nabla^\alpha f(t)-\sum_{k=0}^{n-1}
\frac{(b(\alpha)-t)^{\overline{k-\alpha}}}{\Gamma(k-\alpha+1)}~_{\ominus}\Delta^k f(b(\alpha)).
\end{equation}
In particular, when $0<\alpha<1$, then $a(\alpha)=a$ and $b(\alpha) =b$ and hence we have
\begin{equation}\label{nrelate2}
~^{C}\nabla_a^\alpha f(t)=\nabla_a^\alpha f(t)-
\frac{(t-a)^{\overline{-\alpha}}}{\Gamma(1-\alpha)} f(a)
\end{equation}
and
\begin{equation}\label{nrelate4}
~_{b}^{C}\nabla^\alpha f(t)=~_{b}\nabla^\alpha f(t)-
\frac{(b-t)^{\overline{-\alpha}}}{\Gamma(1-\alpha)} f(b)
\end{equation}
\end{thm}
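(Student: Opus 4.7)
The plan is to reduce both identities directly to the modified interchange formulas Theorem \ref{modleft} and Theorem \ref{modright} that have just been established. Those theorems were designed precisely for functions defined only on $\mathbb{N}_a$ (respectively $~_{b}\mathbb{N}$), which is exactly the setting in which the dual Caputo differences $~^{C}\nabla_{a(\alpha)}^\alpha$ and $~_{b(\alpha)}^{C}\nabla^\alpha$ live, so no extension of $f$ beyond its natural domain is required. This parallels the derivation of Theorem \ref{nabla relate} earlier in the paper, where the non-dual versions were obtained from Theorem \ref{LNg} and Theorem \ref{RNg} by the same $\alpha \mapsto n-\alpha$, $p \mapsto n$ substitution.

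For the left identity I would start from the definition (\ref{Cdual left}),
$$~^{C}\nabla_{a(\alpha)}^\alpha f(t)=\nabla_{a+n-1}^{-(n-\alpha)}\nabla^n f(t),$$
and then apply Theorem \ref{modleft} with $\alpha\mapsto n-\alpha$ and $p\mapsto n$. Under this substitution the exponent $\alpha-p+k$ in (\ref{dLNg1}) becomes $(n-\alpha)-n+k=k-\alpha$, the shift $a+p-1$ becomes $a+n-1=a(\alpha)$, and the Gamma argument $\alpha+k-p+1$ becomes $k-\alpha+1$. The leading term $\nabla^n\nabla_{a+n-1}^{-(n-\alpha)}f(t)$ is by definition (\ref{nld}) exactly $\nabla_{a(\alpha)}^\alpha f(t)$, so after collecting terms the formula (\ref{nrelate1}) falls out immediately.

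The right identity is entirely analogous: I would apply Theorem \ref{modright} with $\alpha\mapsto n-\alpha$ and $p\mapsto n$ to $~_{b-n+1}\nabla^{-(n-\alpha)}~_{\ominus}\Delta^n f(t)=~_{b(\alpha)}^{C}\nabla^\alpha f(t)$, recognize the head term as $~_{b(\alpha)}\nabla^\alpha f(t)$ via (\ref{nrd}), and read off the boundary-term summation directly. The specialisations (\ref{nrelate2}) and (\ref{nrelate4}) for $0<\alpha<1$ are then automatic, since $n=1$ forces $a(\alpha)=a$, $b(\alpha)=b$, and the sums reduce to their single $k=0$ term.

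The main obstacle is not analytic but notational: one must verify cleanly that after the double substitution the summation range is exactly $k=0,\dots,n-1$ with the claimed coefficients $(t-a(\alpha))^{\overline{k-\alpha}}/\Gamma(k-\alpha+1)$ and $(b(\alpha)-t)^{\overline{k-\alpha}}/\Gamma(k-\alpha+1)$, with no off-by-one slip in the base point of the rising factorial or in the evaluation of $\nabla^k f$ and $~_{\ominus}\Delta^k f$. Given the explicit form of Theorem \ref{modleft} and Theorem \ref{modright}, this is bookkeeping rather than a genuine difficulty.
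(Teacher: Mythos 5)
Your proposal is correct and matches the paper's own argument: the paper derives this theorem precisely by invoking Theorem \ref{modleft} and Theorem \ref{modright} with $\alpha$ replaced by $n-\alpha$ and $p$ by $n$, identifying the leading term with the Riemann difference based at $a(\alpha)$ (resp.\ $b(\alpha)$), exactly as you describe. The bookkeeping you flag (exponent $k-\alpha$, Gamma argument $k-\alpha+1$, base points $a+n-1$ and $b-n+1$) checks out, so nothing further is needed.
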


 \indent

 Also, by using the modified Theorem \ref{modleft} and Theorem \ref{modright} we have

 \begin{prop} \label{dnabla trans}
Assume $\alpha>0$ and $f$ is defined on suitable domains $\mathbb{N}_a$
and $_{b}\mathbb{N}$. Then
\begin{equation}\label{dntrans1}
\nabla_{a(\alpha)}^{-\alpha} ~^{C}\nabla_{a(\alpha)}^\alpha
f(t)=f(t)-\sum_{k=0}^{n-1}\frac{(t-a(\alpha))^{\overline{k}}}{k!}\nabla^kf(a(\alpha))
\end{equation}
and
\begin{equation}\label{dntrans2}
~_{b(\alpha)}\nabla^{-\alpha} ~_{b(\alpha)}^{C}\nabla^\alpha
f(t)=f(t)-\sum_{k=0}^{n-1}\frac{(b(\alpha)-t)^{\overline{k}}}{k!}~_{\ominus}\Delta^kf(b(\alpha)).
\end{equation}
In particular, if $0<\alpha\leq1$ then $a(\alpha)=a$   and $b(\alpha)=b$ and hence
\begin{equation}\label{dntrans3}
\nabla_a^{-\alpha} ~^{C}\nabla_a^\alpha f(t)= f(t)-f(a)~~\texttt{and}~~
~_{b}\nabla^{-\alpha} ~_{b}^{C}\nabla^\alpha f(t)=f(t)-f(b)
\end{equation}

\end{prop}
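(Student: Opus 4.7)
The plan is to reduce each identity to a single application of Theorem~\ref{modleft} (respectively Theorem~\ref{modright}) after first compressing the composition of a fractional sum with a Caputo-type difference into a pure integer-order expression via the semigroup property.

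For the left identity \eqref{dntrans1}, I would first unfold the definition \eqref{Cdual left}:
\begin{equation*}
\nabla_{a(\alpha)}^{-\alpha}\,{}^{C}\nabla_{a(\alpha)}^{\alpha} f(t)
=\nabla_{a(\alpha)}^{-\alpha}\bigl[\nabla_{a(\alpha)}^{-(n-\alpha)}\nabla^{n}f(t)\bigr].
\end{equation*}
Since both sums share the base point $a(\alpha)$, Proposition~\ref{lsemi} (applied with base point $a(\alpha)$ instead of $a$, which is legitimate because the exponent law depends only on the starting index of the summations) collapses the composition to
\begin{equation*}
\nabla_{a(\alpha)}^{-n}\nabla^{n} f(t).
\end{equation*}
Now I would invoke Theorem~\ref{modleft} with $p=n$ and the real number $\alpha$ there replaced by the integer $n$, noting that $a(\alpha)=a+n-1=a+p-1$ is exactly the starting point required by that theorem. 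This yields
\begin{equation*}
\nabla_{a(\alpha)}^{-n}\nabla^{n}f(t)=\nabla^{n}\nabla_{a(\alpha)}^{-n}f(t)-\sum_{k=0}^{n-1}\frac{(t-a(\alpha))^{\overline{k}}}{\Gamma(k+1)}\nabla^{k}f(a(\alpha)),
\end{equation*}
and the first term on the right collapses to $f(t)$ by the basic cancellation property (ii) for the nabla left fractional sum (the case $\nabla^{n}\nabla_{a(\alpha)}^{-n}f=f$ is just the statement that $\nabla_{a(\alpha)}^{-n}$ inverts $\nabla^{n}$ on its range). Substituting $\Gamma(k+1)=k!$ produces exactly \eqref{dntrans1}.

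For the right identity \eqref{dntrans2} the strategy is symmetric: unfold \eqref{Cdual right}, apply Proposition~\ref{semi} (the semigroup law for right nabla sums, with base point $b(\alpha)$) to merge $~_{b(\alpha)}\nabla^{-\alpha}\,{}_{b(\alpha)}\nabla^{-(n-\alpha)}$ into $~_{b(\alpha)}\nabla^{-n}$, then apply Theorem~\ref{modright} with $p=n$ (using $b(\alpha)=b-n+1=b-p+1$) and finally use the cancellation $~_{\ominus}\Delta^{n}\,{}_{b(\alpha)}\nabla^{-n}f=f$ coming from property (ii) of the nabla right fractional sum. The special case $0<\alpha\leq 1$ is then immediate because $n=1$ forces $a(\alpha)=a$ and $b(\alpha)=b$, reducing the sum on the right-hand side to its single $k=0$ term.

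The only delicate point is the legitimacy of applying Proposition~\ref{lsemi} and Proposition~\ref{semi} with the shifted base points $a(\alpha)$ and $b(\alpha)$ rather than $a$ and $b$; this is not really an obstacle since those propositions are base-point-agnostic, but it is worth flagging explicitly because the Caputo differences of Section~4 use the unshifted base points while the dual Caputo differences introduced here use the shifted ones. Once this bookkeeping is in place, the proof is essentially algebraic manipulation of previously established identities.
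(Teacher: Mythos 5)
Your proposal is correct and follows essentially the same route the paper intends: unfold the definition of the dual Caputo difference, merge the two nabla sums based at $a(\alpha)$ (resp.\ $b(\alpha)$) via the exponent law, and then invoke Theorem~\ref{modleft} (resp.\ Theorem~\ref{modright}) with $p=n$ and integer order together with the cancellation $\nabla^{n}\nabla_{a(\alpha)}^{-n}f=f$; the paper's own justification is exactly the one-line remark that the result follows from those two modified theorems, mirroring the proof of Proposition~\ref{nabla trans}. Your explicit flagging of the shifted base point $a(\alpha)=a+n-1$ in the semigroup step is a detail the paper leaves implicit, and it is handled correctly.
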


\section{Integration by parts for Caputo fractional differences}

In this section we state the integration by parts formulas for nabla fractional sums and differences obtained in \cite{THFer}, then use the dual identities to obtain delta integration by part formulas.

\begin{prop} \label{summation by parts}\cite{THFer}
For $\alpha>0$, $a,b \in \mathbb{R}$, $f$ defined on $\mathbb{N}_a$ and $g$ defined on $~_{b}\mathbb{N}$, we have

\begin{equation}\label{sum1}
    \sum_{s=a+1}^{b-1}g(s) \nabla_a^{-\alpha} f(s)=\sum_{s=a+1}^{b-1}f(s)~_{b}\nabla^{-\alpha}g(s).
\end{equation}
\end{prop}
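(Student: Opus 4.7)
The plan is to prove the identity by direct substitution of the definitions and an exchange of the order of summation. The right-hand side $\nabla_a^{-\alpha}$ and the left-hand side $_{b}\nabla^{-\alpha}$ are both defined as weighted finite sums with the same kernel $(s-\rho(r))^{\overline{\alpha-1}}/\Gamma(\alpha)$, differing only in which variable is summed and which is free. So the proof should reduce to recognizing that a double sum over a triangular region can be written with the outer and inner roles of $s$ and $r$ swapped.

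First I would expand the left side using definition \eqref{nlf}:
\begin{equation*}
\sum_{s=a+1}^{b-1} g(s)\,\nabla_a^{-\alpha} f(s) = \sum_{s=a+1}^{b-1} g(s) \cdot \frac{1}{\Gamma(\alpha)} \sum_{r=a+1}^{s} (s-\rho(r))^{\overline{\alpha-1}} f(r).
\end{equation*}
Next I would interchange the order of the double summation. The pairs $(s,r)$ with $a+1 \le s \le b-1$ and $a+1 \le r \le s$ are exactly the same as pairs with $a+1 \le r \le b-1$ and $r \le s \le b-1$, giving
\begin{equation*}
\sum_{s=a+1}^{b-1} g(s)\,\nabla_a^{-\alpha} f(s) = \frac{1}{\Gamma(\alpha)}\sum_{r=a+1}^{b-1} f(r) \sum_{s=r}^{b-1} (s-\rho(r))^{\overline{\alpha-1}} g(s).
\end{equation*}
Finally I would recognize the inner sum, by definition \eqref{nrs}, as $\Gamma(\alpha)\cdot{}_{b}\nabla^{-\alpha}g(r)$, which yields the right-hand side after renaming $r$ back to $s$.

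There is really no obstacle here beyond bookkeeping; the only thing to be careful about is the summation range. In particular, at $s = a+1$ the inner sum in $\nabla_a^{-\alpha} f(s)$ has the single term $r=a+1$, while at $r=b-1$ the inner sum in $_{b}\nabla^{-\alpha}g(r)$ has the single term $s=b-1$, so no endpoint terms are lost in the swap. Since both definitions of the fractional sums use exactly the same kernel $(s-\rho(r))^{\overline{\alpha-1}}$, symmetry in $s$ and $r$ makes the exchange of summation order the whole content of the proof.
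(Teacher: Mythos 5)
Your proposal is correct and follows exactly the paper's own argument: expand $\nabla_a^{-\alpha}f(s)$ by Definition (\ref{nlf}), interchange the order of the double summation over the triangular region, and identify the resulting inner sum as $\Gamma(\alpha)\,{}_{b}\nabla^{-\alpha}g(r)$ via Definition (\ref{nrs}). The paper's proof is just a terser version of the same computation, so there is nothing to add.
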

\begin{proof}
By the definition of the nabla left fractional sum we have
\begin{equation}\label{sum2}
   \sum_{s=a+1}^{b-1}g(s) \nabla_a^{-\alpha} f(s)=\frac{1}{\Gamma(\alpha)}\sum_{s=a+1}^{b-1} g(s)\sum_{r=a+1}^{s}(s-\rho(r))^{\overline{\alpha-1}}f(r).
\end{equation}
If we interchange the order of summation we reach at ( \ref{sum1}).

\end{proof}

By the help of Theorem \ref{LNg}, Proposition \ref{lsemi},
(\ref{s1}) and that $\nabla_a^{-(n-\alpha)}f(a)=0$, the authors in \cite{THFer} obtained the following left important tools which lead to a nabla integration by parts formula for fractional differences.

\begin{prop} \label{lbsdandds}\cite{THFer}
For $\alpha >0$, and $f$ defined in a suitable domain $\mathbb{N}_a$, we have

\begin{equation} \label{lbdse}
\nabla_a^{\alpha}  \nabla_a^{-\alpha}f(t)=f(t),
\end{equation}

\begin{equation} \label{lbsde1}
\nabla_a^{-\alpha} \nabla_a^{\alpha} f(t)=f(t),~~\texttt{when}~
\alpha\notin\mathbb{N},
 \end{equation}
 and
 \begin{equation} \label{lbsde2}
 \nabla_a^{-\alpha}  \nabla_a^{\alpha}f(t)=
f(t)-\sum_{k=0}^{n-1}\frac{(t-a)^{\overline{k}}}{k!} \nabla^kf(a),
\texttt{,when}~\alpha=n \in \mathbb{N}.
\end{equation}
\end{prop}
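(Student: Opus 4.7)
My plan is to peel off the three identities in sequence, each time exploiting the composition $\nabla_a^\alpha = \nabla^n \nabla_a^{-(n-\alpha)}$ together with one of the three tools advertised in the hint preceding the proposition. For (\ref{lbdse}) I would simply write $\nabla_a^\alpha \nabla_a^{-\alpha} f(t) = \nabla^n \bigl[\nabla_a^{-(n-\alpha)} \nabla_a^{-\alpha}\bigr] f(t)$, collapse the inner pair of fractional sums to $\nabla_a^{-n} f(t)$ via the semigroup law (Proposition \ref{lsemi}), and finish by property (\ref{s1}), which gives $\nabla^n \nabla_a^{-n} f(t) = f(t)$.

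For (\ref{lbsde2}) (integer $\alpha = n$), the operator $\nabla_a^\alpha$ is literally $\nabla^n$, so the stated identity is nothing other than Theorem \ref{LNg} applied with $p = \alpha = n$, once one replaces $\nabla^n \nabla_a^{-n} f(t)$ by $f(t)$ using (\ref{s1}) and notes $\Gamma(k+1) = k!$.

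For (\ref{lbsde1}) (non-integer $\alpha$), set $h(t) := \nabla_a^{-(n-\alpha)} f(t)$, so that $\nabla_a^\alpha f(t) = \nabla^n h(t)$. Theorem \ref{LNg} with $p = n$ then yields
\begin{equation*}
\nabla_a^{-\alpha} \nabla^n h(t) = \nabla^n \nabla_a^{-\alpha} h(t) - \sum_{k=0}^{n-1} \frac{(t-a)^{\overline{\alpha-n+k}}}{\Gamma(\alpha-n+k+1)}\, \nabla^k h(a).
\end{equation*}
The first term equals $\nabla^n \nabla_a^{-n} f(t) = f(t)$ by Proposition \ref{lsemi} and (\ref{s1}), so the identity reduces to showing the boundary sum is zero. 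The $k=0$ term dies immediately because $h(a) = 0$ (empty sum), which is the very fact named in the hint. For $1 \le k \le n-1$ I would iterate Lemma \ref{AtT}, rewritten as $\nabla \nabla_a^{-\beta} g(t) = \nabla_a^{-\beta} \nabla g(t) + \frac{(t-a)^{\overline{\beta-1}}}{\Gamma(\beta)} g(a)$, so that each further application of $\nabla$ to $h$ generates either a new left fractional sum (still vanishing at $t=a$) or a polynomial term of the shape $(t-a)^{\overline{n-\alpha-j}}$ that vanishes at $t=a$ by the convention $0^{\overline{\mu}} = 0$ from Definition \ref{rising}. Hence $\nabla^k h(a) = 0$ for every $0 \le k \le n-1$ and the boundary sum vanishes identically.

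The only genuine obstacle is this last point: the hint calls out only the single vanishing $\nabla_a^{-(n-\alpha)} f(a) = 0$, so propagating it to every $\nabla^k h(a)$ with $1 \le k < n$ is where some bookkeeping with Lemma \ref{AtT} and the rising-factorial convention is needed. Once that is settled, all three identities fall out essentially mechanically.
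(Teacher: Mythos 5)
Your proof is correct and follows exactly the route the paper indicates: the paper gives no proof of its own here (it cites \cite{THFer}), but the sentence preceding the proposition names precisely the ingredients you use --- Theorem \ref{LNg}, Proposition \ref{lsemi}, (\ref{s1}) and the vanishing of $\nabla_a^{-(n-\alpha)}f$ at $a$. The one place where you do more work than necessary is the verification that $\nabla^k h(a)=0$ for $1\le k\le n-1$: since $h(t)=\nabla_a^{-(n-\alpha)}f(t)$ is an empty sum for every $t\le a$, it vanishes identically on $\{a,a-1,a-2,\dots\}$, so all of its backward differences at $a$ are zero at once, with no need to iterate Lemma \ref{AtT} or to invoke the convention $0^{\overline{\mu}}=0$.
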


 By the help of Theorem \ref{RNg}, Proposition \ref{semi},
(\ref{s2}) and that $~_{b}\nabla^{-(n-\alpha)}f(b)=0$, the authors also in \cite{THFer} also obtained the following right important tool:

\begin{prop} \label{bsdandds}\cite{THFer}
For $\alpha >0$, and $f$ defined in a suitable domain $~_{b}\mathbb{N}$, we have

\begin{equation} \label{bdse}~_{b}\nabla^{\alpha}  ~_{b}\nabla^{-\alpha}f(t)=f(t),
\end{equation}

\begin{equation} \label{bsde1}
~_{b}\nabla^{-\alpha} ~_{b}\nabla^{\alpha} f(t)=f(t),~~\texttt{when}~
\alpha\notin\mathbb{N},
 \end{equation}
 and
 \begin{equation} \label{bsde2}
~_{b} \nabla^{-\alpha}  ~_{b}\nabla^{\alpha}f(t)=
f(t)-\sum_{k=0}^{n-1}\frac{(b-t)^{\overline{k}}}{k!}~_{a}\Delta^kf(b)
\texttt{,when}~\alpha=n \in \mathbb{N}.
\end{equation}
\end{prop}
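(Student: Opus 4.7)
The plan is to treat the three assertions in sequence, each unfolding the definition of $~_{b}\nabla^{\alpha}$ and invoking the previously established commutation and semigroup tools.

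For identity (\ref{bdse}), I would start from the definition $~_{b}\nabla^{\alpha} g(t) = ~_{\circleddash}\Delta^n ~_{b}\nabla^{-(n-\alpha)} g(t)$ with $g = ~_{b}\nabla^{-\alpha}f$. The semigroup property (Proposition \ref{semi}) collapses $~_{b}\nabla^{-(n-\alpha)} ~_{b}\nabla^{-\alpha} f(t)$ to $~_{b}\nabla^{-n}f(t)$, and then property (\ref{s2}) finishes the computation as $~_{\circleddash}\Delta^n ~_{b}\nabla^{-n}f(t) = f(t)$.

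For identity (\ref{bsde1}), with $\alpha \notin \mathbb{N}$, I would write $~_{b}\nabla^{-\alpha} ~_{b}\nabla^{\alpha} f(t) = ~_{b}\nabla^{-\alpha} ~_{\circleddash}\Delta^n ~_{b}\nabla^{-(n-\alpha)} f(t)$ and commute $~_{b}\nabla^{-\alpha}$ past $~_{\circleddash}\Delta^n$ by applying Theorem \ref{RNg} to $g = ~_{b}\nabla^{-(n-\alpha)}f$ with $p = n$. This produces $~_{\circleddash}\Delta^n ~_{b}\nabla^{-\alpha} ~_{b}\nabla^{-(n-\alpha)}f(t)$, which reduces to $f(t)$ via Proposition \ref{semi} and (\ref{s2}) exactly as in the first part, minus a boundary sum $\sum_{k=0}^{n-1} \frac{(b-t)^{\overline{\alpha-n+k}}}{\Gamma(\alpha+k-n+1)} ~_{\ominus}\Delta^k ~_{b}\nabla^{-(n-\alpha)}f(b)$ that must be shown to vanish.

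The hard part is precisely killing this boundary sum. The $k=0$ term is $~_{b}\nabla^{-(n-\alpha)}f(b) = 0$ because the defining sum is empty (convention $\sum_{s=b}^{b-1} = 0$). For $k \geq 1$, I would iterate Lemma \ref{RN}, which produces $~_{\ominus}\Delta \, ~_{b}\nabla^{-(n-\alpha)} f(t) = ~_{b}\nabla^{-(n-\alpha)} ~_{\circleddash}\Delta f(t) + \frac{(b-t)^{\overline{n-\alpha-1}}}{\Gamma(n-\alpha)} f(b)$; evaluating at $t = b$, the fractional sum is again empty and the explicit term vanishes because $0^{\overline{n-\alpha-1}} = 0$ (since $n-\alpha-1 > -1$). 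Iterating this scheme inductively gives $~_{\ominus}\Delta^k \, ~_{b}\nabla^{-(n-\alpha)} f(b) = 0$ for all $k = 0, 1, \ldots, n-1$, so the boundary sum disappears and (\ref{bsde1}) follows.

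For identity (\ref{bsde2}), the case $\alpha = n \in \mathbb{N}$ is direct: $~_{b}\nabla^{n} = ~_{\circleddash}\Delta^{n}$ by definition, so Theorem \ref{RNg} applied to $f$ with $\alpha$ replaced by $n$ and $p = n$ yields
\begin{equation*}
~_{b}\nabla^{-n} \, ~_{\circleddash}\Delta^n f(t) = ~_{\circleddash}\Delta^n \, ~_{b}\nabla^{-n} f(t) - \sum_{k=0}^{n-1} \frac{(b-t)^{\overline{k}}}{k!} ~_{\ominus}\Delta^k f(b),
\end{equation*}
and the leading term collapses to $f(t)$ by (\ref{s2}). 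In this integer case the boundary sum is not expected to vanish, since it contains $~_{\ominus}\Delta^k f(b)$ for $f$ itself rather than for a vanishing fractional sum, which is precisely why the correction terms survive in (\ref{bsde2}).
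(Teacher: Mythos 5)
Your proposal is correct and follows essentially the same route the paper indicates for this result (which it only sketches, citing Theorem \ref{RNg}, Proposition \ref{semi}, property (\ref{s2}) and the vanishing of $~_{b}\nabla^{-(n-\alpha)}f$ at $b$): unfold the definition, commute via Theorem \ref{RNg}, collapse the sums by the semigroup property, and observe that the boundary terms vanish in the non-integer case but survive as the Taylor-type correction when $\alpha=n$. Your extra step of killing the $k\geq 1$ boundary terms by iterating Lemma \ref{RN} together with $0^{\overline{n-\alpha-1}}=0$ is a legitimate filling-in of a detail the paper leaves to the empty-sum convention $\sum_{k=s+1}^{s}=0$.
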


\begin{prop} \label{nabla bydiff} \cite{THFer}
Let $\alpha>0$ be non-integer and $a,b\in \mathbb{R}$ such that $a< b$ and $b\equiv
a~(mod~1)$.If $f$ is defined on $ _{b}N$ and $g$ is
defined on $N_a$, then
\begin{equation}\label{nabla bydiff1}
\sum_{s=a+1}^{b-1} f(s)\nabla_a^\alpha g(s)
=\sum_{s=a+1}^{b-1}g(s)~_{b}\nabla^\alpha f(s).
\end{equation}
\end{prop}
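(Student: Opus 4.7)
The plan is to reduce integration by parts for Riemann nabla fractional differences to the already--established summation by parts for fractional sums (Proposition \ref{summation by parts}), via the two-sided inverse relations in Propositions \ref{lbsdandds} and \ref{bsdandds}. The non-integer hypothesis on $\alpha$ is exactly what lets those inverse relations be used without boundary terms.

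First I would introduce the abbreviations $F(s) := {}_{b}\nabla^{\alpha} f(s)$ and $G(s) := \nabla_{a}^{\alpha} g(s)$. Because $\alpha \notin \mathbb{N}$, Proposition \ref{bsdandds} (equation (\ref{bsde1})) gives $f(s) = {}_{b}\nabla^{-\alpha} F(s)$ on ${}_{b}\mathbb{N}$, and Proposition \ref{lbsdandds} (equation (\ref{lbsde1})) gives $g(s) = \nabla_{a}^{-\alpha} G(s)$ on $\mathbb{N}_a$. Substituting only the first of these into the left-hand side of (\ref{nabla bydiff1}) produces
\begin{equation*}
\sum_{s=a+1}^{b-1} f(s)\, \nabla_{a}^{\alpha} g(s) \;=\; \sum_{s=a+1}^{b-1} \bigl[{}_{b}\nabla^{-\alpha} F\bigr](s)\, G(s).
\end{equation*}

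Next I would apply Proposition \ref{summation by parts}, noting that $G$ is (a function on) $\mathbb{N}_a$ while $F$ is on ${}_{b}\mathbb{N}$, so $G$ plays the role of $f$ and $F$ plays the role of $g$ in that proposition. This yields
\begin{equation*}
\sum_{s=a+1}^{b-1} G(s)\,{}_{b}\nabla^{-\alpha} F(s) \;=\; \sum_{s=a+1}^{b-1} F(s)\, \nabla_{a}^{-\alpha} G(s).
\end{equation*}
Finally, applying the other inverse relation $\nabla_{a}^{-\alpha} G = \nabla_{a}^{-\alpha}\nabla_{a}^{\alpha} g = g$ reduces the right-hand side to $\sum_{s=a+1}^{b-1} F(s)\, g(s) = \sum_{s=a+1}^{b-1} g(s)\, {}_{b}\nabla^{\alpha} f(s)$, which is the desired identity.

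I do not expect a real obstacle in the calculation itself; the only things requiring care are bookkeeping. The non-integer assumption on $\alpha$ is essential, since for integer $\alpha$ equations (\ref{lbsde2}) and (\ref{bsde2}) would introduce boundary correction terms rather than clean two-sided inverses. The congruence $b \equiv a \pmod 1$ is what guarantees that the summation range $\{a+1,\dots,b-1\}$ lines up simultaneously with the domain of $\nabla_a^\alpha g$ and of ${}_b\nabla^\alpha f$, so that Proposition \ref{summation by parts} can be invoked on the same set of indices without any shift.
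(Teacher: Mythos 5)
Your proposal is correct and follows essentially the same route as the paper's own proof: the paper also rewrites $f$ as ${}_{b}\nabla^{-\alpha}({}_{b}\nabla^{\alpha}f)$ via (\ref{bsde1}), transfers the fractional sum by Proposition \ref{summation by parts}, and finishes with (\ref{lbsde1}). Your added remarks on why non-integrality of $\alpha$ and the congruence $b\equiv a~(mod~1)$ matter are accurate but not points the paper dwells on.
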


The proof was achieved by making use of Proposition \ref {summation by parts} and the tools Proposition \ref{lbsdandds} and Proposition \ref{bsdandds}.

By the above nabla integration by parts formulas and the dual identities in Lemma \ref{left dual} adjusted to our definitions and Lemma \ref{right dual}, in \cite{Thsh} the following delta integration by parts formulas were obtained.

\begin{prop} \label{delta by parts semmation}\cite{Thsh}
Let $\alpha>0$, $a,b\in \mathbb{R}$ such that $a< b$ and $b\equiv
a~(mod~1)$. If $f$ is defined on $N_a$ and $g$ is defined on
$_{b}N$, then we have
\begin{equation}\label{byse}
\sum_{s=a+1}^{b -1}g(s)(\Delta_{a+1}
^{-\alpha}f)(s+\alpha)=\sum_{s=a+1}^{b-1}f(s) ~_{b-1}\Delta ^{-\alpha}g(s-\alpha).
\end{equation}

\end{prop}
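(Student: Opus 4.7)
The plan is to convert both sides of the claimed identity from delta fractional sums into nabla fractional sums via the left/right dual identities, and then invoke the already-established nabla summation by parts formula (Proposition \ref{summation by parts}). In this way essentially no fresh calculation is needed beyond tracking the index shifts.

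First, I would apply Lemma \ref{left dual}(ii) to the left-hand side. Replacing $a$ by $a+1$ in that lemma and adjusting it to the convention used in this paper (under which $\nabla_a^{-\alpha}$ starts the summation at $s=a+1$ rather than at $s=a$ as in the older formula (\ref{remind})), one gets the identity $(\Delta_{a+1}^{-\alpha}f)(s+\alpha) = \nabla_a^{-\alpha}f(s)$. Substituting this turns the left-hand side of (\ref{byse}) into $\sum_{s=a+1}^{b-1}g(s)\,\nabla_a^{-\alpha}f(s)$.

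Next, I would apply Lemma \ref{right dual}(ii) to the right-hand side, with $b$ replaced by $b-1$. This yields $({}_{b-1}\Delta^{-\alpha}g)(s-\alpha) = {}_{b}\nabla^{-\alpha}g(s)$ for $s\in {}_{b-1}\mathbb{N}$, so the right-hand side of (\ref{byse}) becomes $\sum_{s=a+1}^{b-1}f(s)\,{}_{b}\nabla^{-\alpha}g(s)$. The equality of the two transformed expressions is then exactly the content of Proposition \ref{summation by parts}, and the proof is complete.

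The only real obstacle is bookkeeping: Lemma \ref{left dual} was originally proved under the old nabla convention (summation from $s=a$), so I must be careful to verify that the shift $a\mapsto a+1$ in the base point correctly absorbs the discrepancy between the two conventions. This is exactly why the indices $a+1$ on the delta operator and $a$ on the nabla operator (respectively $b-1$ and $b$ on the right side) appear in the statement: they are chosen precisely so that the adjusted dual identity produces the nabla operator whose base point matches the range of summation $\{a+1,\dots,b-1\}$ required by Proposition \ref{summation by parts}. Once this matching is checked, the argument is a one-line substitution.
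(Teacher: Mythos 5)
Your proposal is correct and matches the paper's own route exactly: the paper states that this proposition is obtained "by the above nabla integration by parts formulas and the dual identities in Lemma \ref{left dual} adjusted to our definitions and Lemma \ref{right dual}," which is precisely your reduction of both sides to $\sum_{s=a+1}^{b-1}g(s)\nabla_a^{-\alpha}f(s)=\sum_{s=a+1}^{b-1}f(s)\,{}_{b}\nabla^{-\alpha}g(s)$ followed by Proposition \ref{summation by parts}. Your bookkeeping of the base-point shifts ($a+1$ versus $a$, and $b-1$ versus $b$) to reconcile the old convention (\ref{remind}) with Definition \ref{fractional sums} is also the right and only delicate point.
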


\begin{prop} \label{delta by parts semmation}\cite{Thsh}
Let $\alpha>0$ be non-integer and assume that $b\equiv a~(mod~1)$. If $f$ is defined on $ _{b}N$ and $g$ is
defined on $N_a$, then
\begin{equation}\label{bydiff1}
\sum_{s=a+1}^{b-1} f(s)\Delta_{a+1}^\alpha
g(s-\alpha)=\sum_{s=a+1}^{b-1}g(s)~_{b-1}\Delta^\alpha f(s+\alpha).
\end{equation}

\end{prop}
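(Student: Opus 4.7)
The plan is to reduce the delta integration by parts identity (\ref{bydiff1}) to the already established nabla integration by parts formula (Proposition \ref{nabla bydiff}) via the dual identities linking delta and nabla fractional differences.

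First I would invoke Lemma \ref{left dual}(i) with starting point $a+1$ in place of $a$ to rewrite the integrand on the left-hand side, obtaining $\Delta_{a+1}^\alpha g(s-\alpha) = \nabla_{a+1}^\alpha g(s)$. In the same spirit, I would apply Lemma \ref{right dual}(i) with endpoint $b-1$ in place of $b$ to the right-hand side integrand to obtain $~_{b-1}\Delta^\alpha f(s+\alpha) = ~_{b}\nabla^\alpha f(s)$. Substituting both replacements into (\ref{bydiff1}) reduces the claim to the nabla identity
\begin{equation*}
\sum_{s=a+1}^{b-1} f(s)\,\nabla_{a+1}^\alpha g(s) \;=\; \sum_{s=a+1}^{b-1} g(s)\; _{b}\nabla^\alpha f(s).
\end{equation*}

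Second, I would obtain this reduced identity by applying Proposition \ref{nabla bydiff} with $a$ replaced by $a+1$. Because the congruence hypothesis $b\equiv a~(mod~1)$ is preserved under this shift and $g$ is defined on $\mathbb{N}_a \supset \mathbb{N}_{a+1}$, the proposition applies verbatim. I would note that the present sum formally runs from $s=a+1$ rather than from $s=a+2$, which is the natural starting index after the shift; the discrepancy vanishes because, by the empty-sum convention, $\nabla_{a+1}^{-(n-\alpha)} g(a+1) = 0$, and hence $\nabla_{a+1}^\alpha g(a+1) = 0$, making the $s=a+1$ boundary term contribute nothing on the left.

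The main obstacle is the book-keeping around the endpoints: the dual identities in Lemma \ref{left dual} and Lemma \ref{right dual} are stated only for indices above the threshold $n+a$ (respectively below $b-n$), whereas the desired sum runs from $a+1$ to $b-1$. The delicate part is checking that the boundary terms introduced by extending the range of validity of the dual identities back to $s=a+1$ and up to $s=b-1$ are either zero by convention or match on both sides of the equation so that the reduction to Proposition \ref{nabla bydiff} is legitimate.
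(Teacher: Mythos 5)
Your overall strategy is the one the paper itself uses: reduce (\ref{bydiff1}) to the nabla integration by parts formula of Proposition \ref{nabla bydiff} via the dual identities. However, your first step contains a concrete error. Lemma \ref{left dual} was derived for the nabla sum defined by (\ref{remind}), which starts at $s=a$, whereas Definition \ref{fractional sums}(iii) of this paper starts the sum at $s=a+1$; this is precisely why the text insists on ``the dual identities in Lemma \ref{left dual} \emph{adjusted to our definitions}.'' A direct computation with the paper's conventions gives $(\Delta_{a+1}^{-\alpha}y)(t+\alpha)=\nabla_{a}^{-\alpha}y(t)$ and hence $(\Delta_{a+1}^{\alpha}g)(s-\alpha)=\nabla_{a}^{\alpha}g(s)$, not $\nabla_{a+1}^{\alpha}g(s)$ as you claim. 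With the corrected left dual identity (and your correct use of Lemma \ref{right dual}(i) with $b$ replaced by $b-1$), the two sides of (\ref{bydiff1}) become $\sum_{s=a+1}^{b-1}f(s)\nabla_a^{\alpha}g(s)$ and $\sum_{s=a+1}^{b-1}g(s)~_{b}\nabla^{\alpha}f(s)$, which is exactly Proposition \ref{nabla bydiff} with the same $a$, $b$ and the same summation range; no boundary terms need to be discarded at all. The shift from $\Delta_a^{\alpha}$ to $\Delta_{a+1}^{\alpha}$ in the statement is itself the trace of this adjustment.

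As written, your reduction does not close even on its own terms. Applying Proposition \ref{nabla bydiff} with $a$ replaced by $a+1$ produces sums starting at $s=a+2$. You correctly note that the extra left-hand term $f(a+1)\nabla_{a+1}^{\alpha}g(a+1)$ vanishes by the empty-sum convention, but you say nothing about the extra right-hand term $g(a+1)~_{b}\nabla^{\alpha}f(a+1)$, which is not zero in general; so a nonvanishing boundary term is left unaccounted for. Both difficulties disappear once the left dual identity is shifted correctly.
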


Now, we proceed in this section to obtain nabla and delta integration by parts formulas for Caputo fractional differences.

\begin{thm} \label{Caputo by parts}
Let $0<\alpha<1$ and $f,g$ be functions defined on $\mathbb{N}_a \cap ~_{b}\mathbb{N}$ where $a\equiv b ~(mod ~1)$. Then
\begin{equation} \label{cbp1}
\sum_{s=a+1}^{b+1} g(s) ~^{C}\nabla_a^\alpha f(s)=f(s) ~_{b}\nabla^{-(1-\alpha)}g(s)\mid_a^{b-1}+ \sum_{s=a}^{b-2} f(s) ~_{b}\nabla^\alpha g(s),
\end{equation}
where clearly $_{b}\nabla^{-(1-\alpha)}g(b-1)= g(b-1)$.
\end{thm}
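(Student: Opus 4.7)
The plan is to unfold the Caputo operator into its constituents (a nabla, followed by a fractional sum), swap the fractional sum onto $g$ using Proposition \ref{summation by parts}, then use classical nabla summation by parts to transfer the single $\nabla$ from $f$ onto $\,{}_{b}\nabla^{-(1-\alpha)}g$, and finally identify this last object as $\,{}_{b}\nabla^{\alpha}g$.

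Since $0<\alpha<1$ gives $n=1$, the defining relation (\ref{cnl}) reads $\,{}^{C}\nabla_a^\alpha f(s) = \nabla_a^{-(1-\alpha)}\nabla f(s)$. Substituting this into the left hand side and applying Proposition \ref{summation by parts} with $\alpha$ replaced by $1-\alpha$ and $f$ replaced by $\nabla f$, the weight $g(s)$ gets transferred onto the right fractional sum of $g$:
\begin{equation}\label{plan-step1}
\sum_{s} g(s)\,{}^{C}\nabla_a^\alpha f(s) = \sum_{s} \nabla f(s)\cdot {}_{b}\nabla^{-(1-\alpha)}g(s).
\end{equation}

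Next I would apply the standard discrete product rule $\nabla(uv)(s) = u(s)\nabla v(s) + v(s-1)\nabla u(s)$ with $u(s)={}_{b}\nabla^{-(1-\alpha)}g(s)$ and $v(s)=f(s)$, and telescope to obtain
\begin{equation}\label{plan-step2}
\sum_{s} \nabla f(s)\cdot {}_{b}\nabla^{-(1-\alpha)}g(s) = f(s)\,{}_{b}\nabla^{-(1-\alpha)}g(s)\Big|_{a}^{b-1} - \sum_{s} f(s-1)\,\nabla\!\left[{}_{b}\nabla^{-(1-\alpha)}g(s)\right].
\end{equation}
The boundary term of (\ref{plan-step2}) is exactly the first piece on the right hand side of (\ref{cbp1}) (and the remark that ${}_{b}\nabla^{-(1-\alpha)}g(b-1)=g(b-1)$ is an immediate one-term sum check from the definition (\ref{nrs})).

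The last step — and the only slightly delicate one — is to recognise the remaining sum in (\ref{plan-step2}) as $\sum_s f(s)\,{}_{b}\nabla^\alpha g(s)$ after reindexing. From the definition (\ref{nrd}) and the convention $\Delta h(t)=\nabla h(t+1)$, one has
\begin{equation}\label{plan-step3}
{}_{b}\nabla^\alpha g(t) = -\Delta\!\left[{}_{b}\nabla^{-(1-\alpha)}g(t)\right] = {}_{b}\nabla^{-(1-\alpha)}g(t) - {}_{b}\nabla^{-(1-\alpha)}g(t+1),
\end{equation}
so setting $t=s-1$ gives $\nabla[{}_{b}\nabla^{-(1-\alpha)}g(s)] = -\,{}_{b}\nabla^\alpha g(s-1)$. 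Substituting this into (\ref{plan-step2}) and shifting the summation index $s\mapsto s+1$ converts the sum into $\sum_{s=a}^{b-2} f(s)\,{}_{b}\nabla^\alpha g(s)$, which is the second term of (\ref{cbp1}).

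The main obstacle is purely bookkeeping of indices: making sure Proposition \ref{summation by parts} is applied on the range for which both sides are well-defined, and that the telescoping boundary evaluation in (\ref{plan-step2}) matches the stated endpoints $s=a$ and $s=b-1$ after the reindexing in step (\ref{plan-step3}). No new analytic input is required; the proof reduces to combining the already-established Proposition \ref{summation by parts} with one ordinary nabla integration by parts and the elementary identity (\ref{plan-step3}) linking the Riemann right fractional difference to the nabla of its fractional sum.
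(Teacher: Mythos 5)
Your proposal is correct and follows essentially the same route as the paper's own proof: unfold $~^{C}\nabla_a^\alpha f=\nabla_a^{-(1-\alpha)}\nabla f$, transfer the fractional sum onto $g$ via Proposition \ref{summation by parts}, perform one classical nabla summation by parts, and identify $\nabla\left[{}_{b}\nabla^{-(1-\alpha)}g(s)\right]=-\,{}_{b}\nabla^{\alpha}g(s-1)$ from the definition of the right Riemann difference. The index bookkeeping you flag does close up, since ${}_{b}\nabla^{-(1-\alpha)}g(s)=0$ for $s\geq b$, so the top terms of the reindexed sum either vanish or collapse into the boundary evaluation at $s=b-1$, exactly as in the paper.
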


\begin{proof}
From the definition of Caputo fractional difference and Proposition \ref{summation by parts} we have
\begin{equation}\label{sbp1}
 \sum_{s=a+1}^{b+1} g(s) ~^{C}\nabla_a^\alpha f(s)= \sum_{s=a+1}^{b+1} g(s) \nabla_a^{-(1-\alpha)} \nabla f(s)= \sum_{s=a+1}^{b+1} \nabla f(s)~_{b}\nabla^{-(1-\alpha)} g(s).
\end{equation}
By integration by parts from difference calculus, $\nabla f(s)=\Delta f(s-1)$ and the definition of nabla right fractional difference, we reach at
\begin{equation}
  \sum_{s=a+1}^{b+1} g(s) ~^{C}\nabla_a^\alpha f(s)= f(s) ~_{b}\nabla^{-(1-\alpha)} g(s)|_a^{b-1} +  \sum_{s=a+1}^{b+1} f(s-1)(~_{b}\nabla^\alpha g)(s-1).
\end{equation}
From which ( \ref{cbp1}) follows.
\end{proof}

\begin{thm} \label{Caputo by parts delta}
Let $0<\alpha<1$ and $f,g$ be functions defined on $\mathbb{N}_a \cap ~_{b}\mathbb{N}$ where $a\equiv b ~(mod ~1)$. Then
\begin{equation}\label{cbpd1}
\sum_{s=a+1}^{b+1} g(s) ~^{C}\Delta_a^\alpha f(s-\alpha)=f(s) ~_{b-1}\Delta^{-(1-\alpha)}g(s-(1-\alpha))\mid_a^{b-1}+ \sum_a^{b-2} f(s) ~_{b-1}\Delta^\alpha g(s+\alpha).
\end{equation}

\end{thm}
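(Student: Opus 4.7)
The plan is to derive this delta version as a direct corollary of the nabla integration-by-parts formula (Theorem \ref{Caputo by parts}) by pushing the identity through the dual relations already established in the paper. The key observation is that for $0<\alpha<1$ we have $n=[\alpha]+1=1$ and consequently $a(\alpha)=a+n-1=a$ and $b(\alpha)=b-n+1=b$, so the dual Caputo difference coincides with the ordinary nabla Caputo difference on both sides. This makes Proposition \ref{lCdual} directly applicable without any adjustment of the starting point.

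First, I would rewrite the left-hand side of (\ref{cbpd1}) using Proposition \ref{lCdual}, which for $0<\alpha<1$ specializes to $({}^{C}\Delta_a^\alpha f)(s-\alpha) = ({}^{C}\nabla_a^\alpha f)(s)$. Therefore
\begin{equation*}
\sum_{s=a+1}^{b+1} g(s)\,{}^{C}\Delta_a^\alpha f(s-\alpha) = \sum_{s=a+1}^{b+1} g(s)\,{}^{C}\nabla_a^\alpha f(s),
\end{equation*}
and Theorem \ref{Caputo by parts} already evaluates this as
\begin{equation*}
f(s)\,{}_{b}\nabla^{-(1-\alpha)}g(s)\Big|_a^{b-1} + \sum_{s=a}^{b-2} f(s)\,{}_{b}\nabla^\alpha g(s).
\end{equation*}

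Next, I would convert the right nabla operators into right delta operators using Lemma \ref{right dual}, applied with $b$ replaced by $b-1$: part (ii) gives ${}_{b}\nabla^{-(1-\alpha)}g(s) = ({}_{b-1}\Delta^{-(1-\alpha)}g)(s-(1-\alpha))$, while part (i) gives ${}_{b}\nabla^{\alpha}g(s) = ({}_{b-1}\Delta^{\alpha}g)(s+\alpha)$. Substituting these two identities into the expression above reproduces exactly (\ref{cbpd1}).

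The only real obstacle is bookkeeping with the index shifts: one has to verify that Lemma \ref{right dual} is being applied with $g$ on the correct domain (here on ${}_{b}\mathbb{N}$, which matches the hypothesis after replacing $b$ by $b-1$ in the lemma), and that the summation ranges $s\in\{a+1,\dots,b+1\}$ and $s\in\{a,\dots,b-2\}$ inherited from Theorem \ref{Caputo by parts} are compatible with the points at which the shifted delta operators are evaluated. Once this compatibility is checked, no computation beyond symbolic substitution is required.
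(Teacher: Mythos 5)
Your proposal is correct and follows essentially the same route as the paper's own proof: apply the dual Caputo identity of Proposition \ref{lCdual} (noting $a(\alpha)=a$ for $0<\alpha<1$), invoke Theorem \ref{Caputo by parts}, and then translate the right nabla operators into right delta operators via Lemma \ref{right dual} (i) and (ii) with $b$ replaced by $b-1$. Your explicit remark that $n=1$ makes $a(\alpha)=a$ and $b(\alpha)=b$ is a detail the paper leaves implicit, but the argument is the same.
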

\begin{proof}
By the dual Caputo identity (\ref{lCdual one}) in Proposition \ref{lCdual} and Theorem \ref{Caputo by parts} we have

\begin{equation}\nonumber
  \sum_{s=a+1}^{b+1} g(s) ~^{C}\Delta_a^\alpha f(s-\alpha)=f(s) ~_{b}\nabla^{-(1-\alpha)}|_a^{b-1}+ \sum_{s=a}^{b-2} f(s)~ _{b}\nabla^\alpha g(s).
\end{equation}
Then (\ref{cbpd1}) follows by Lemma \ref{right dual} (i, ii).
\end{proof}

\section{The Q-operator and fractional difference equations}

If $f(s)$ is defined on $N_a\cap ~_{b}N$ and $a\equiv b~ (mod
~1)$ then $(Qf)(s)=f(a+b-s)$. The Q-operator generates a dual identity by which the left type and the right type fractional sums and differences are related. Using the change of variable $u=a+b-s$, in \cite{Th Caputo} it was shown  that
\begin{equation}\label{sum pr}
    \Delta_a^{-\alpha}Qf(t)= Q~_{b}\Delta^{-\alpha}f(t),
\end{equation}

and hence
\begin{equation}\label{pr}
    ~^{C}\Delta_a^\alpha Qf(t)= Q (~^{C}_{b}\Delta^\alpha f)(t).
\end{equation}
The proof of (\ref{pr}) follows by the definition, (\ref{sum pr}) and by noting that

$$-Q\nabla f(t)=\Delta Qf(t).$$

Similarly, in the nabla case we have
\begin{equation}\label{npr sum}
   \nabla_a^{-\alpha}Qf(t)= Q~_{b}\nabla^{-\alpha}f(t),
 \end{equation}

and hence

\begin{equation}\label{rpr}
   ~^{C}\nabla_a^\alpha Qf(t)= Q (~^{C}_{b}\nabla^\alpha f)(t).
\end{equation}

The proof of (\ref{rpr}) follows by the definition, (\ref{npr sum}) and that

$$-Q\Delta f(t)=\nabla Qf(t).$$

The Q-dual identities (\ref{pr}) and( \ref{rpr}) are  still valid for the delta and nabla  (Riemann) fractional differences, respectively. The proof is similar to the Caputo case above.

It is remarkable to mention that the Q-dual identity (\ref{pr})  can not be obtained if the definition of the delta right fractional difference introduced by Nuno R.O. Bastos et al. in \cite{Nuno} or by At{\i}c{\i} F. et al. in \cite{Atmodel} is used. Thus, the definition introduced in \cite{Th Caputo} and \cite{TDbyparts} is more convenience . Analogously, the Q-dual identity (\ref{rpr}) indicates that the nabla right Riemann and Caputo fractional differences presented in this article are also more convenient.

 It is clear from the above argument that, the Q-operator agrees with its continuous counterpart when applied to left and
right fractional Riemann Integrals and the Caputo and Riemann derivatives. More
generally, this discrete version of the Q-operator can be used to
transform the discrete delay-type fractional functional difference dynamic
equations to advanced ones. For details in the continuous counterparts see
\cite{Thabet}.

\begin{exam} \label{nlinear} \cite{Th Caputo}
Let $0<\alpha\leq 1$, $a=\alpha-1$ and consider the left Caputo
nonhomogeneous  fractional difference equation
\begin{equation} \label{lfractional}
~~^{C}\Delta_a^\alpha y(t)= \lambda y(t+\alpha-1)+f(t),~~y(a)=a_0,~t\in\mathbb{ N}_0.
\end{equation}
Note that the solution $y(t)$, if exists, is defined on $\mathbb{N}_a$ and hence
$~^{C}\Delta_a^\alpha y(t)$ becomes defined on $ \mathbb{N}_{a+(1-\alpha)}=\mathbb{N}_0$. Thus,
if we apply $\Delta_0^{-\alpha}$ on the equation (\ref{lfractional})  and then use the successive approximation the following delta type solution is obtained

\begin{equation} \label{fin3}
y(t)=a_0 E_{(\alpha)}(\lambda,t)+\sum_{s=0}^{t-\alpha}
E_{(\alpha,\alpha)}(\lambda,t-\sigma(s)) f(s)
\end{equation}

\end{exam}

where the delta type discrete Mittag-Leffler functions are defined by

\begin{defn} \label{DML} (Delta Discrete Mittag-Leffler) \cite{Th Caputo} For $\lambda \in \mathbb{R}$ and $\alpha, \beta, z \in \mathbb{C}$ with $Re(\alpha)>0$, the Delta discrete (like) Mittag-Leffler functions are defined by
\begin{equation}
E_{(\alpha, \beta)}(\lambda,z)= \sum_{k=0}^\infty \lambda^k
\frac{(z+(k-1)(\alpha-1))^{(k
\alpha)}(z+k(\alpha-1))^{( \beta-1)}}{\Gamma(\alpha
k+\beta)}.
\end{equation}

For $\beta=1$, it is written that
\begin{equation} \label{M22}
E_{(\alpha)} (\lambda, z)\triangleq E_{(\alpha, 1)}(\lambda, z)= \sum_{k=0}^\infty \lambda^k
\frac{(z+(k-1)(\alpha-1))^{(k
\alpha)}}{\Gamma(\alpha
k+1)}.
\end{equation}

\end{defn}

Next, we solve nabla Caputo type nonhomogeneous fractional difference equation to formulate nabla type discrete Mittag-Leffler functions
. Those functions generalize the nabla discrete exponential functions. For details about delta and nabla type exponential functions on arbitrary time scales we refer the reader to (\cite{Adv}, pages 10, 76).

\begin{defn} \label{nDML} (Nabla Discrete Mittag-Leffler)  For $\lambda \in \mathbb{R}$ and $\alpha, \beta, z \in \mathbb{C}$ with $Re(\alpha)>0$, the nabla discrete (like) Mittag-Leffler functions are defined by
\begin{equation}
E_{\overline{\alpha, \beta}}(\lambda,z)= \sum_{k=0}^\infty \lambda^k
\frac{z^{\overline{k\alpha+\beta-1}}} {\Gamma(\alpha
k+\beta)}.
\end{equation}
For $\beta=1$, it is written that
\begin{equation} \label{nM22}
E_{\overline{\alpha}} (\lambda, z)\triangleq E_{\overline{\alpha, 1}}(\lambda, z)=  \sum_{k=0}^\infty \lambda^k
\frac{z^{\overline{k\alpha}}} {\Gamma(\alpha
k+1)}.
\end{equation}

\end{defn}

\begin{exam} \label{nabla nlinear}
Let $0<\alpha\leq 1$, $a \in\mathbb{R}$ and consider the nabla left Caputo
nonhomogeneous  fractional difference equation
\begin{equation} \label{ nabla lfractional}
~^{C}\nabla_a^\alpha y(t)= \lambda y(t)+f(t),~~y(a)=a_0,~t\in \mathbb{N}_a.
\end{equation}

If we apply $\nabla_a^{-\alpha}$ on  equation (\ref{ nabla lfractional}) then
by (\ref{ntrans3}) we see that
$$y(t)= a_0+\frac{\lambda}{\Gamma(\alpha)}\sum_{s=a+1}^{t}(t-\rho(s))^{\overline{\alpha-1}}
y(s)+\nabla_a^{-\alpha}f(t).$$
To obtain an explicit  solution, we apply the method of successive
approximation. Set $y_0(t)=a_0$ and $$y_m(t)=a_0+ \nabla_a^{-\alpha}
[\lambda y_{m-1}(t)+f(t)], m=1,2,3,.... $$
For $m=1$, we have by the power formula (\ref{pnl1})
 $$y_1(t)=a_0[1+\frac{\lambda
(t-a)^{\overline{\alpha}}}{\Gamma(\alpha+1)}]+\nabla_a^{-\alpha}f(t).$$
For $m=2$, we also see by the help of Proposition \ref{lsemi} that

$$y_2(t)= a_0+ \lambda \nabla_a^{-\alpha}[a_0+
\frac{(t-a)^{\overline{\alpha}}}{\Gamma(\alpha+1)}]+\nabla_a^{-\alpha}f(t)+
\lambda \nabla_a^{-2 \alpha}f(t) =$$
$$a_0 [1+\frac{\lambda t^{(\alpha)}}{\Gamma(\alpha+1)}+\frac{\lambda^2
(t-a)^{\overline{2\alpha}}}{\Gamma(2\alpha+1)}]+\nabla_a^{-\alpha}f(t)+
\lambda \nabla_a^{-2 \alpha}f(t).$$
Proceeding inductively and making use of Proposition \ref{lsemi} and let
$m\rightarrow\infty$ we obtain the solution
$$y(t)=a_0\sum_{k=0}^\infty\frac{\lambda^k
(t-a)^{\overline{k\alpha}}}   {\Gamma(k\alpha+1)}]+\sum_{k=1}^\infty
\lambda ^{k-1} (\nabla_a^{-k \alpha}f)(t).$$
 Then,
\begin{equation} \label{nabla fin}
y(t)=a_0 E_{\overline{\alpha}}(\lambda,t-a)+\sum_{k=0}^\infty \lambda^k
\frac{1}{\Gamma(\alpha
k+\alpha)}\sum_{s=a+1}^{t}(t-\rho(s))^{\overline{k\alpha+\alpha-1}}f(s).
\end{equation}
Interchanging the order of sums in (\ref{nabla fin}), we conclude that

\begin{equation} \label{fin2}
y(t)=a_0 E_{\overline{\alpha}}(\lambda,t-a)+\sum_{s=a+1}^{t}\sum_{k=0}^\infty \lambda^k
\frac{(t-\rho(s))^{\overline{k\alpha+\alpha-1}}}{\Gamma(\alpha
k+\alpha)}f(s).
\end{equation}
That is
\begin{equation} \label{fin3}
y(t)=a_0 E_{\overline{\alpha}}(\lambda,t-a)+\sum_{s=a+1}^{t}
E_{\overline{\alpha,\alpha}}(\lambda,t-\rho(s)) f(s).
\end{equation}

\end{exam}

\begin{rem}
If we solve the nabla discrete fractional system (\ref{ nabla lfractional}) with $\alpha=1$ and $a_0=1$ we obtain the solution

$$y(t)= \sum_{k=0}^\infty \lambda^k \frac{(t-a)^{\overline{k}}} {k!}+\sum_{s=a+1}^t \sum_{k=0}^\infty \lambda^k \frac{(t-\rho(s))^{\overline{k}}} {k!} f(s).$$

The first part of the solution is the nabla discrete exponential function $\widehat{e}_{\lambda}(t,a)$. For the sake of more comparisons see (\cite{Adv}, chapter 3).
\end{rem}

\begin{exam} \label{Q} \cite{Th Caputo}
Let $0<\alpha\leq 1$, $a=\alpha-1$ and $b$ such that $a \equiv b ~(mod
~1)$. Let $y(t)$ be defined on $\mathbb{N}_a \cap~_{b}\mathbb{N}$. Consider the following
Caputo right fractional difference equation
\begin{equation} \label{rfractional}
~_{b}\Delta^\alpha Q y(t)=\lambda y(2a+b-t), + f(a+b-t)~~(Qy)(b)=a_0.
\end{equation}
If we apply the $Q$ operator on the  Caputo fractinal difference equation
(\ref{rfractional}), then we obtain the left Caputo fractional difference
equation (\ref{lfractional}). For more about the Q-operator and its use in functional fractional differential equations we refer to \cite{thabetetal}.
\end{exam}

Similar to Eaxample \ref{Q} above, we can use the Q-operator to transform, as well,  nabla left type fractional difference equations to right ones and vise versa.

\end{document}